\newcommand{\RNum}[1]{\uppercase\expandafter{\romannumeral #1\relax}}
\newtheorem{thm}{Theorem}[section]
\newtheorem{lem}[thm]{Lemma}
\newtheorem{conj}[thm]{Conjecture}
\newtheorem{prop}[thm]{Proposition}
\newtheorem{cor}[thm]{Corollary}
\theoremstyle{definition}
\newtheorem{defn}[thm]{Definition}
\newtheorem{rmk}[thm]{Remark}
\newtheorem{example}[thm]{Example}
\newtheorem{question}[thm]{Question}
\numberwithin{equation}{section}
\newcommand\be{\begin{equation}}
\newcommand\ba{\begin{eqnarray}}
\newcommand\ee{\end{equation}}
\newcommand\ea{\end{eqnarray}}
\def\C{{\mathbb C}}
\def\Z{{\mathbb Z}}
\def\P{{\mathbb P}}
\def\A{{\mathbb A}}
\def\N{{\mathbb N}}
\DeclareMathOperator{\Prep}{Prep}
\DeclareMathOperator{\Per}{Per}
\DeclareMathOperator{\Aut}{Aut}
\DeclareMathOperator{\Sym}{Sym}
\DeclareMathOperator{\Orb}{Orb}
\title[Polynomial endomorphisms of $\A^2$ with many periodic curves]
{Polynomial endomorphisms of $\A^2$ with many periodic curves.}
\thanks{The author was supported in part by NSERC grant RGPIN-2022-02951.}
\author{Xiao Zhong}
\address{University of Waterloo \\
Department of Pure Mathematics \\
Waterloo, Ontario \\
Canada  N2L 3G1}
\email{x48zhong@uwaterloo.ca}
\date{\today}
\subjclass[2020]{37P55, 37P45}
\begin{document}
\maketitle
\begin{abstract}
   In this paper, we prove that for a regular polynomial endomorphism of positive degree on $\P^2$, a family of curves containing a Zariski dense set of periodic curves is invariant under some iterate of the endomorphism. The setting is closely related to the Relative Dynamical Manin-Mumford Conjecture, recently proposed by DeMarco and Mavraki, which concerns a parametrized family of endomorphisms and varieties. Our result proves a weaker version of the conjecture where the endomorphism is a regular polynomial endomorphism on $\P^2$ that remains fixed in the family, and the family of curves contains a dense set of periodic curves. This result can also be viewed as a Dynamical Manin-Mumford type statement on the moduli space of divisors, and it proves a special case of the Dynamical Manin-Mumford Conjecture with a stronger assumption. 
   
   Moreover, our result specifically implies a uniform degree stabilization statement for a generic set of curves in a family under the transformation of a regular polynomial endomorphism. We demonstrate that a more general degree stabilization statement for a family of positive dimension subvarieties in $\P^K$ under the transformation of a family of endomorphisms is predicted by the Relative Dynamical Manin-Mumford Conjecture. We then prove that it is true when $K=2$ for families of regular polynomial endomorphisms under certain restrictions on the ramifications at the line at infinity. 
   
   Finally, we demonstrate an application of our result to classify all regular polynomial endomorphisms that admit infinitely many periodic curves of bounded degree.
\end{abstract}

\section{Introduction}
The Dynamical Manin-Mumford Conjecture is the dynamical generalization of the well-known Manin-Mumford Conjecture, which conjectures that a subvariety contains many periodic points of an endomorphism must be special with respect to this endomorphism. After a series of study and refinement by \cite{Zhang1}, \cite{Zhang2} and \cite{GT21}, the precise statement of the conjecture is the following:

\begin{conj}\label{conj: DMM}
     Let
 $f :X\to X$ be a polarized endomorphism of a smooth projective variety over a field
 of characteristic zero, and $Z\subset X$ be a subvariety containing a Zariski dense set of
 preperiodic points. Then either $Z$ is preperiodic or $Z$ is special, in the sense that it is
 contained in some subvariety $Y$ that is both $f^n$-and $\psi$-invariant, for some $n \geq 1$, where
$\psi$ is another polarized endomorphism commuting with $f^n$ on $Y$, and $Z$ is preperiodic under
 $\psi$.
 \end{conj}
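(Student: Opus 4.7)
The plan is to combine arithmetic equidistribution of preperiodic points with a structure theory for invariant analytic currents in order to produce the auxiliary endomorphism $\psi$. Because the preperiodic points of the polarized endomorphism $f$ have canonical height zero, Yuan's arithmetic equidistribution theorem applied to the Zariski-dense sequence of preperiodic points in $Z$ forces the canonical current $T_f$ to interact rigidly with $Z$: the restriction $T_f|_Z$ coincides with the equilibrium current of an induced polarized system on the normalization of $Z$, and the canonical height $\hat h_f$ vanishes identically on an adelic enlargement of $Z$. This arithmetic compatibility, cast in the Yuan-Zhang formalism of adelic line bundles on quasi-projective varieties, is the principal input.

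Let $Y$ be the Zariski closure of the forward orbit $\bigcup_{n \geq 0} f^n(Z)$; then $Y$ is $f$-invariant, and the equidistribution data promote $(f|_Y, Y, L|_Y)$ to a polarized system along which $Z$ supports an invariant analytic foliation. Appealing to the Dinh-Sibony theory of invariant currents and to Cantat's work on commuting endomorphisms of projective varieties, this foliation should furnish a positive-dimensional family of local analytic self-maps of $Y$ that commute with some iterate of $f|_Y$. The next step is to convert one such local symmetry into an algebraic endomorphism $\psi : Y \to Y$. The plan is to bound the arithmetic degree of the graph of a candidate $\psi$ inside the Chow variety of $Y \times Y$ using the polarization of $f$, and then to apply Northcott finiteness to descend the graph to an algebraic cycle defined over a number field.

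The main obstacle is precisely this algebraization step: a local analytic commuting system need not in general come from an algebraic endomorphism, and the known cases of the conjecture (Raynaud for abelian varieties, Laurent for tori, Ghioca-Tucker-Zhang for split products) all exploit an ambient algebraic group structure to bypass the difficulty. I would attack algebraicity by means of Zhang's arithmetic Hodge-index theorem, which should force a candidate $\psi$ to respect the adelic polarization up to torsion, and by a Fujiki-Lieberman type analysis of $\Aut(X)$ to reduce the general polarized setting to the group-theoretic cases. Granting algebraicity, commutativity of $\psi$ with some iterate $f^n$ and preperiodicity of $Z$ under $\psi$ drop out of the foliation picture from the previous step, since $Z$ is then a union of leaves along which $\psi$ acts by translation. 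The delicate point I expect to resist even this strategy is controlling the field of definition of the local foliation, which may require an effective Bogomolov-type bound on small-height subvarieties along the lines of Ullmo-Zhang.
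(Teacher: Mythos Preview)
The statement you are trying to prove is \emph{Conjecture}~\ref{conj: DMM}, the Dynamical Manin--Mumford Conjecture. The paper does not prove it; on the contrary, the paper states explicitly that ``the conjecture remains widely open, and only a few cases are known,'' and then uses it only as a hypothesis in conditional results (Theorem~\ref{thm: DMM-implies-general-main}, Proposition~\ref{conj: cor-degree-stable-family}). There is therefore no ``paper's own proof'' to compare against: your proposal is an attack on a genuinely open problem, and it should be evaluated as such.

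Read as a research plan, the proposal has several gaps that you yourself partially flag. First, Yuan's equidistribution theorem and the Yuan--Zhang adelic formalism live over number fields (or finitely generated fields), whereas the conjecture is posed over an arbitrary field of characteristic zero; you would at minimum need a specialization argument to reduce to the arithmetic setting, and it is not clear that the dense set of preperiodic points survives specialization in a usable way. Second, the assertion that equidistribution forces $T_f|_Z$ to be the equilibrium current of ``an induced polarized system on the normalization of $Z$'' is not something equidistribution delivers: equidistribution controls weak limits of Galois orbits, not the slicing of $T_f$ by $Z$, and there is in general no reason for $Z$ to carry any polarized self-map at all. Third, and most seriously, the step in which a ``positive-dimensional family of local analytic self-maps'' commuting with $f|_Y$ is extracted from Dinh--Sibony/Cantat-type current theory, and then algebraized via arithmetic Hodge-index plus Northcott, is precisely the heart of the conjecture; no known mechanism produces the auxiliary polarized $\psi$ outside the group-theoretic settings (abelian varieties, tori, split products) that you already list. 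You acknowledge this obstacle, but the Fujiki--Lieberman reduction you gesture at concerns automorphism groups, not the typically discrete and rigid monoid of polarized endomorphisms, so it does not obviously help. In short: this is a reasonable wish list of tools one might bring to bear, but it is not a proof, and the paper does not claim one.
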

The conjecture remains widely open, and only a few cases are known. For example, the conjecture is proven for splitting morphisms on $(\P^1)^n$ by \cite{GNY18},\cite{GNY19} and for polynomial endomorphisms on $\A^2$ extendable to an endomorphism on $\P^2$ with some constraint on the ramification at the line at infinity by \cite{DFR23}.
\subsection{Relative Dynamical Manin-Mumford Conjecture}
Recently, inspired by Gao and Habegger's work on Relative Manin-Mumford Conjecture for families of abelian varieties \cite[Theorem 1.1 and 1.3]{GH23}, DeMarco and Mavraki proposed the generalized Dynamical Manin-Mumford Conjecture for families of dynamical system in \cite{DM24}. 

We follow \cite{DM24} to introduce the necessary notations. An algebraic family of endomorphisms of $\P^n$ of degree $d$ is a morphism $$\Phi:  S \times \P^n \to S \times \P^n$$
given by $\Phi(s,z) =(s, f_s(z)) $ where $f_s$ is an endomorphism of $\P^n$ of degree $d$. Let $\mathcal{X} \subseteq S \times \P^n$ denote a closed irreducible subvariety which is flat over a Zariski open subset of $S$. We use $\mathbf{X}$ denote the generic fiber of $\mathcal{X}$ and let $\mathbf{\Phi} : \mathbf{P}^n \to \mathbf{P}^n$ be the map induced by $\Phi$, viewed as an endomorphism over the function field $\C(S)$.

We say $\mathcal{X}$ is $\Phi$-special if there exists
 a subvariety $\mathbf{Z} \subseteq \mathbf{P}^n$ over the algebraic closure $\C(S)$ containing the generic fiber $\mathbf{X}$, a
 polarizable endomorphism $\mathbf{\Psi} : \mathbf{Z} \to \mathbf{Z}$, and a positive integer $n$ such that the following hold:
\begin{itemize}
    \item $\mathbf{\Phi}^n(\mathbf{Z}) = \mathbf{Z}$;
    \item $\mathbf{\Phi}^n \circ \mathbf{\Psi} = \mathbf{\Psi} \circ \mathbf{\Phi}^n$ on $\mathbf{Z}$; and
    \item $\mathbf{X}$ is preperiodic under $\mathbf{\Psi}$.
\end{itemize}

We denote $r_{\Phi,\mathcal{X}}$ the relative spacial dimension of $\mathcal{X}$ over $S$. This is given by 
$$ r_{\Phi, \mathcal{X}} \coloneqq \min\{\dim_S\mathcal{Y} : \mathcal{X} \subseteq \mathcal{Y} \text{ and } \mathcal{Y} \text{ is $\Phi$-special}\},$$
where $\dim_S \mathcal{Y} = \dim \mathcal{Y} - \dim S$ is the dimension of a generic fiber of $\mathcal{Y}$ over $S$.

With the notations from above, DeMarco and Mavraki proposed the following relative version of Dynamical Manin-Mumford Conjecture: 
\begin{conj}\label{conj: DM24-conj-1,1}
     Let $\Phi : S \times \P^N \to S \times \P^N$ be an algebraic family of morphisms of degree $>1$, and let $\mathcal{X} \subseteq S \times \P^N$ be a complex, irreducible subvariety which is flat over $S$. The
 following are equivalent:
 \begin{itemize}
     \item $\mathcal{X}$ contains a Zariski-dense set of $\Phi$-preperiodic points.
     \item $\hat{T}^{r_{\Phi.\mathcal{X}}}_\Phi \wedge [X] \neq 0 $ for the relative special dimension $r_{\Phi,\mathcal{X}}$.
     \end{itemize}

\end{conj}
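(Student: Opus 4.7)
The equivalence splits into two implications of very different depth, and I would attack them in sequence. For the direction $(1) \Rightarrow (2)$: $\Phi$-preperiodic points are exactly those at which the relative dynamical canonical height vanishes, so a Zariski-dense set of such points in $\mathcal{X}$ produces on the generic fiber $\mathbf{X}$ over $\overline{\C(S)}$ a generic net of points with vanishing relative canonical height. The first step is to translate density of small-height points into an arithmetic statement: by a Zhang-type successive minima inequality applied to the dynamical polarization associated to $\hat{T}_\Phi$, the essential minimum of the relative height on $\mathbf{X}$ vanishes, which forces certain self-intersections of $\hat{T}_\Phi$ restricted to $\mathcal{X}$ to vanish. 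The exponent $r_{\Phi, \mathcal{X}}$ enters naturally here: if $\mathcal{Y}$ is any $\Phi$-special envelope of $\mathcal{X}$, then $\hat{T}_\Phi$ restricted to $\mathcal{Y}$ pairs non-degenerately only up to the relative dimension of $\mathcal{Y}$, so the smallest power of $\hat{T}_\Phi$ that can still give a nonzero pairing with $[\mathcal{X}]$ is precisely $r_{\Phi, \mathcal{X}}$. Unwinding this bookkeeping should give the first direction.

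For the direction $(2) \Rightarrow (1)$: This is the hard direction and is the principal content of Conjecture \ref{conj: DM24-conj-1,1}. The plan is to invoke a relative arithmetic equidistribution theorem in the spirit of Yuan and Yuan--Zhang: non-vanishing of $\hat{T}_\Phi^{r_{\Phi,\mathcal{X}}} \wedge [\mathcal{X}]$ should be equivalent, via arithmetic intersection theory, to the positivity of a suitable non-degenerate relative height on $\mathcal{X}$. Combined with equidistribution of Galois orbits of small-height points on $\mathbf{X}$, one would then extract a Zariski-dense set of preperiodic specializations inside $\mathcal{X}$. The template to imitate is Gao--Habegger's treatment of the relative Manin--Mumford problem for abelian schemes \cite{GH23}, where a Betti form controls how torsion sections accumulate across fibers and converts height-theoretic non-degeneracy into geometric density.

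\textbf{Main obstacle.} The critical difficulty lies in the reverse implication. The Gao--Habegger method depends fundamentally on the existence of a Betti uniformization for abelian schemes, whose real-analytic rigidity translates directly into the non-degeneracy statements for the relative N\'eron--Tate pairing on which the entire argument rests. For algebraic families of endomorphisms of $\P^N$ no such Betti map is available, and constructing an effective replacement---perhaps through a finer analysis of the bifurcation current of $\Phi$ and its pluripotential interaction with non-special $\mathcal{X}$---seems to require genuinely new geometric input. This is why the full conjecture remains open, and I would expect that any complete proof must either produce a dynamical analogue of the Betti form in this setting or develop an intrinsic pluripotential criterion distinguishing $\Phi$-special subvarieties from arbitrary flat families. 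Absent such a tool, the natural fallback---and the route apparently taken in the remainder of this paper---is to establish unconditional partial results under geometric restrictions on $\Phi$ (such as regular polynomial endomorphisms of $\P^2$ with controlled ramification at infinity), where the behaviour at the boundary divisor provides enough rigidity to bypass the missing Betti input.
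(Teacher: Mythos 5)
The statement you were asked about is Conjecture~\ref{conj: DM24-conj-1,1}, which the paper records verbatim from DeMarco and Mavraki \cite{DM24}; the paper does not prove it, and it is widely open. There is therefore no proof in the paper against which to compare yours. Your submission is honest about this: you sketch a plausible strategy for the easier direction (density of preperiodic points forcing non-vanishing of $\hat{T}_\Phi^{r_{\Phi,\mathcal{X}}}\wedge[\mathcal{X}]$ via a Zhang-type successive-minima argument, and the observation that $r_{\Phi,\mathcal{X}}$ is calibrated to the minimal $\Phi$-special envelope), and you correctly identify that the reverse implication is the genuine open problem and that no Betti-map analogue is currently available in the dynamical $\P^N$ setting. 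So your write-up is a diagnosis of the problem's difficulty, not a proof, and you say so yourself.

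Since the task was to supply a proof, the gap is the whole of the reverse implication, and more precisely the absence of any replacement for the real-analytic rigidity that the Betti form supplies in the abelian-scheme case of Gao--Habegger. Your forward-direction sketch also leaves the key step unproved: you assert that vanishing essential minimum forces the appropriate self-intersections of $\hat{T}_\Phi$ to vanish ``only up to'' the relative special dimension, but this bookkeeping is exactly where the conjecture's content lies and would need a genuine argument. If your intent was to explain what the paper actually does, note that the paper's own contribution is much narrower: Theorem~\ref{thm: main-1} and Corollary~\ref{cor: conj-1.1-DM} establish a weak, one-directional consequence of the conjecture (non-vanishing of $\hat{T}_{\Phi^{\times N}}^{\wedge r}\wedge[\mathcal{X}^N]$) in the \emph{constant-family} regular-polynomial case on $\P^2$ under the stronger hypothesis that a Zariski-dense set of fibers $\mathcal{X}_s$ are periodic \emph{curves}, using an explicit analysis of branches at $H_\infty$ (Lemma~\ref{lem: transvers-non-super-E}) rather than any height or equidistribution machinery. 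Reading Sections~\ref{sect: proof-of-main} and~\ref{sect: impl-RDMM} will show how far the paper gets and by what means, which is quite different from the Zhang/Yuan--Zhang/Gao--Habegger route you outline.
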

 Here $\hat{T}_\Phi$ is the canonical Green current associated to $\Phi$ on $S \times  \P^N$.

This is a very ambitious conjecture and remains open in most cases. When $\dim S = 0$, the Conjecture \ref{conj: DM24-conj-1,1} reduces to the Dynamical Manin-Mumford Conjecture. When $N = 1$, the Conjecture is already known. Moreover, several important problems can be viewed as special cases of Conjecture \ref{conj: DM24-conj-1,1}, including the Dynamical Andr\'e-Oort Conjecture proposed by \cite{BD13} and \cite{GHT15}, and questions on the uniform bounds for common preperiodic points between two rational functions on $\P^1$, see for example \cite{DM24-2}. See \cite[Section 3]{DM24} for detailed discussion.

The recent work of Mavraki and Schmidt \cite{MS24} establish a special case of a weaker version of the conjecture if one replaces $\P^N$ with $(\P^1)^N$ (see also the discussion in \cite[Conjecture 1.2]{DM24}).
\subsection{Main Result}
In this paper, we study {\bf regular polynomial endomorphisms} of $\P^2$ of degree $>1$, that is, endomorphisms of $\P^2$ extending polynomial endomorphisms on $\A^2$. We focus on the {\em constant family} case, where
\[
\Phi(s,p) = (s, F(p))
\]
for a fixed regular polynomial endomorphism $F$. Within this setting, we consider a family $\mathcal{X}$ that contains a Zariski dense set of periodic curves. Our first main result shows that such a family must be invariant under $F$, and moreover, that the degree of a generic member of the family remains stable under the action of $F$.

\begin{thm}\label{thm: main-1}
     Let $Z \subseteq \mathcal{M}_d$ be a subvariety of the space of effective divisors of degree $d$ in $\P^2$. Let $F$ be a regular polynomial endomorphism on $\P^2$ of degree $>1$. Suppose that $Z$ contains a Zariski dense set of divisors corresponding to periodic curves under $F$. Then, after replacing $F$ with some iterate, for a generic curve $C$ such that $[C] \in Z$, we have $\deg(C) = \deg(F(C))$. Moreover, for any curve $C'$ such that $[C'] \in Z$ we have $[F(C')] \in Z$.
 \end{thm}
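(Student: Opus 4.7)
The plan is to reduce the problem to a Dynamical Manin--Mumford statement on a symmetric product of $\P^1$ by projecting onto the line at infinity. Because $F$ is a regular polynomial endomorphism, $F$ preserves $L_\infty \subset \P^2$ and the restriction $F|_{L_\infty}: L_\infty \to L_\infty$ is a morphism of degree $d_F$ on $\P^1$. Define the morphism
\[
    \pi_\infty : \mathcal{M}_d \longrightarrow \Sym^d(L_\infty) \cong \P^d
\]
sending an effective divisor $C$ of degree $d$ to the degree-$d$ divisor $C \cap L_\infty$. The projection formula, applied to $F^*[L_\infty] = d_F [L_\infty]$, yields for every $C \in \mathcal{M}_d$ the identity
\[
    d_F \cdot (F|_{L_\infty})_*(C \cap L_\infty) \;=\; F_*(C) \cap L_\infty.
\]
Iterating, a periodic curve $C$ of period $n$ (which satisfies $\deg(F^n|_C) = d_F^n$) gives $(F|_{L_\infty})^n_*(C \cap L_\infty) = C \cap L_\infty$, so $\pi_\infty(C)$ is supported on the $F|_{L_\infty}$-periodic points of $L_\infty$.

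Next, I would lift through the symmetric-group quotient $q : (\P^1)^d \twoheadrightarrow \Sym^d(L_\infty)$ and set $\widetilde{Z} = q^{-1}(\overline{\pi_\infty(Z)})$. The Zariski density of periodic curves in $Z$ produces a Zariski dense set of points in $\widetilde{Z}$ that are periodic under the splitting endomorphism $(F|_{L_\infty})^{\times d}$. The Dynamical Manin--Mumford theorem for splitting morphisms on $(\P^1)^d$ of Ghioca--Nguyen--Ye \cite{GNY18,GNY19}, already invoked in the introduction, then forces each irreducible component of $\widetilde{Z}$ to be a special preperiodic subvariety--a product of preperiodic curves and points for $F|_{L_\infty}$--and after replacing $F$ by a suitable iterate, both $\widetilde{Z}$ and $\overline{\pi_\infty(Z)}$ become invariant under the induced actions.

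With this structural information in hand, I would derive the two conclusions. For generic degree preservation, rewrite the displayed identity as $e_C \cdot (F(C) \cap L_\infty) = d_F \cdot (F|_{L_\infty})_*(C \cap L_\infty)$ using $F_*(C) = e_C \cdot F(C)$ with $e_C = \deg F|_C$; the $(F|_{L_\infty})_*$-invariance of $\overline{\pi_\infty(Z)}$ forces $F(C) \cap L_\infty$ to have degree $d$ for generic $C \in Z$, so $e_C = d_F$ and $\deg F(C) = d$. For the ``moreover'' statement, the locus $\{C \in Z : [F(C)] \in Z\}$ is Zariski closed in $Z$ once degree preservation is established, and after a further iterate of $F$ so that the $F_*$-orbit of $Z$ closes back onto itself, this locus contains the dense set of periodic curves in $Z$ and hence equals $Z$. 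The main obstacle will be that $\pi_\infty$ is far from injective, so invariance of $\overline{\pi_\infty(Z)}$ at the $L_\infty$-level does not automatically promote to invariance of $Z$ inside $\mathcal{M}_d$; bridging this gap will require additional input, for instance from the canonical Green current of $F$ on $\A^2$ or a careful analysis of the fibers of $\pi_\infty|_Z$.
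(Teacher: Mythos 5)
Your projection-to-infinity strategy is genuinely different from the paper's, but it has two fatal gaps, one of which you acknowledge and one of which you do not. The one you do not acknowledge is in the degree-preservation step. From $e_C\cdot\bigl(F(C)\cap L_\infty\bigr)=d_F\cdot (F|_{L_\infty})_*(C\cap L_\infty)$, taking degrees only yields $e_C\cdot\deg F(C)=d_F\cdot d$, which holds tautologically for \emph{every} curve $C$ of degree $d$ (it is just $\deg F_*(C)=d_F d$). The asserted conclusion $\deg F(C)=d$ is equivalent to $e_C=d_F$, and nothing about $\overline{\pi_\infty(Z)}$ being $(F|_{L_\infty})_*$-invariant touches $e_C=\deg(F|_C)$: the pushforward $(F|_{L_\infty})_*(C\cap L_\infty)$ always has degree $d$ and always lies in $\Sym^d(L_\infty)$, so ``having degree $d$'' is no constraint. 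You would be implicitly assuming the thing to be proved, namely that $F(C)$ corresponds to a point of $Z$. The gap you do acknowledge—that invariance of $\overline{\pi_\infty(Z)}$ does not lift to invariance of $Z$ because $\pi_\infty$ is highly non-injective—is not a side issue to be patched later; it \emph{is} the theorem. In the projection you throw away exactly the multiplicity and tangency data of $C$ along $H_\infty$ that the paper's argument reconstructs. There is also a subtler issue with the GNY input: the conclusion of \cite{GNY18,GNY19} is the full DMM alternative, so when $F|_{H_\infty}$ is a power map or Chebyshev map (both of which occur for regular polynomial endomorphisms) you only get that $\widetilde Z$ sits inside a \emph{special} subvariety, not that it becomes invariant under an iterate of $(F|_{L_\infty})^{\times d}$.

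For comparison, the paper's proof does not project to $L_\infty$ at all. It works directly with curves in $\P^2$, splitting into the case where infinitely many periodic curves pass through a common point of $H_\infty$ and the case where they do not (Lemma~\ref{lem: one-point-or-web}). In the first case it performs iterated blow-ups over $H_\infty$ (Lemmas~\ref{lem: finite-blows-up-deter}, \ref{lem: infinite-intersect-with-E}) to spread the curves out along exceptional divisors, then invokes the key local input (Lemma~\ref{lem: transvers-non-super-E}, adapted from Xie): through each non-superattracting periodic point of $F$ on an exceptional divisor there is a \emph{unique} invariant formal curve, and it meets the divisor transversely. This uniqueness and transversality is what forces a bijective parametrization $\tau:\P^1\to Z$ (Proposition~\ref{prop: moduli-DMM-1-D}), from which both $F$-invariance of $Z$ and generic degree preservation follow by specializing the induced map on $\P^2_{\C(t)}$. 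That local, blow-up-based argument is precisely the mechanism that is missing from your proposal, and it is also what handles the superattracting locus at infinity that your projection would collapse.
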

\begin{rmk}
  Suppose $Z$ is a subvariety of the Chow variety of pure dimension $D$ and degree $d$ cycles inside $\P^K$, for some $K \geq 2$. Throughout the article, for a subvariety $C \subseteq \P^K$ of dimension $D$, we let $[C]$ denote the algebraic cycle corresponding to $C$, and we write $[C] \in Z$ if there exists a $z \in Z$ whose geometric support as a variety in $\P^K$ is equal to the union of the irreducible components of $C$.
\end{rmk}
 A key ingredient in the proof of Theorem~\ref{thm: main-1} builds on ideas from~\cite[Sections~5--6]{Xie23}. 
These techniques play a central role in handling invariant curves through an unramified point on the line at infinity and form the foundation of our approach.

 Theorem \ref{thm: main-1} implies a special case of Dynamical Manin-Mumford Conjecture under a stronger assumption (see Remark \ref{rmk: special-case-DMM}), and also proves a weaker form of the Conjecture \ref{conj: DM24-conj-1,1}:
 \begin{cor}\label{cor: conj-1.1-DM}
      Let $S$ be a smooth and irreducible quasi-projective variety defined over $\C$. Let $\Phi : S \times \P^2 \to S \times \P^2 $ be a constant family of endomorphism such that for every point $(s, p) \in S \times \P^2$
     $$ \Phi(s,p) = (s, F(p))$$
     for a regular polynomial endomorphism $F$ of degree $>1$ does not depend on $s$. Let $\mathcal{X} \subseteq S \times \P^2$ be an irreducible hypersurface that is flat over $S$ and projects dominantly to $S$. Suppose there exists a Zariski dense set of $s \in S$ such that $X_s$, the fiber of $\mathcal{X}$ at $s$, is a periodic curve under $F$. Then for all $N \in \Z^+$, we have 
     $$ \hat{T}_{\Phi^{\times N}}^{\wedge r_{\Phi^{\times N}, \mathcal{X}^N}} \wedge [\mathcal{X}^N] \neq 0,$$
     and $r_{\Phi^{\times N}, \mathcal{X}^N} \leq \min\{ 2N , N + \dim S\}$.
 \end{cor}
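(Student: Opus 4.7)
The plan is to deduce Corollary \ref{cor: conj-1.1-DM} from Theorem \ref{thm: main-1} together with a direct computation exploiting the constant family structure of $\Phi$. First, I would apply Theorem \ref{thm: main-1} to the image $Z \subseteq \mathcal{M}_d$ of the classifying map $S \to \mathcal{M}_d$, $s \mapsto [X_s]$. After replacing $F$ by some iterate, this yields $F(Z) \subseteq Z$ and $\deg F(X_s) = \deg X_s$ for generic $s \in S$; equivalently, for generic $s$ we have $F(X_s) = X_{\sigma(s)}$ for some map $\sigma: S \to S$ defined on a Zariski open, possibly multi-valued through the fiber of $S \to Z$.

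To bound $r_{\Phi^{\times N}, \mathcal{X}^N}$, observe first that $r \leq 2N$ is immediate: the ambient $\mathcal{Y} = S^N \times (\P^2)^N$ is trivially $\Phi^{\times N}$-special, with $\mathbf{Z} = (\mathbf{P}^2)^N$ and $\mathbf{\Psi} = F^{\times N}$. For the sharper inequality $r \leq N + \dim S$ (relevant only when $N > \dim S$), I would produce a $\Phi^{\times N}$-special container of the form
\[
\mathcal{Y}_N := \mathcal{X}^{N - \dim S} \times (S \times \P^2)^{\dim S} \subseteq S^N \times (\P^2)^N,
\]
which contains $\mathcal{X}^N$ and has relative dimension exactly $N + \dim S$ over $S^N$. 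The generic fiber $\mathbf{X}^{N - \dim S} \times (\mathbf{P}^2)^{\dim S}$ sits inside $(\mathbf{P}^2)^N$, and the extra $\dim S$ unconstrained $\mathbf{P}^2$-factors provide the room required to build a polarizable endomorphism $\mathbf{\Psi}$ of $(\mathbf{P}^2)^N$ commuting with $F^{\times N}$ under which this fiber is preperiodic, typically via a combination of $F$ on the free factors and a permutation (defined over $\overline{\C(S^N)}$) that absorbs the twist introduced by $\sigma$.

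For the nonvanishing, the constant family structure gives $\hat{T}_{\Phi^{\times N}} = \sum_{i=1}^{N} \pi_{2,i}^{*} T_F$, where $\pi_{2,i}: S^N \times (\P^2)^N \to \P^2$ denotes projection onto the $i$-th $\P^2$ factor. Expanding the wedge and using the product decomposition $[\mathcal{X}^N] = \boxtimes_{i=1}^{N} [\mathcal{X}]$ under $S^N \times (\P^2)^N \cong \prod_{i=1}^{N} (S \times \P^2)$, each monomial contributes an external product $\boxtimes_i (\pi_2^{*} T_F^{\wedge a_i} \wedge [\mathcal{X}])$ with $a_1 + \cdots + a_N = r$ and each $a_i \leq 2$. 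Each factor on $S \times \P^2$ is nonzero: the $a_i = 2$ case follows from the generic dominance of $\pi_2|_\mathcal{X}: \mathcal{X} \to \P^2$ (the degenerate case of a constant family is handled separately since then $\mathcal{X}$ is already $\Phi$-periodic), and the smaller cases are easier. Thus $\hat{T}_{\Phi^{\times N}}^{\wedge r} \wedge [\mathcal{X}^N] \neq 0$ for every $r \leq 2N$, in particular at $r = r_{\Phi^{\times N}, \mathcal{X}^N}$. The main obstacle lies in the second step, specifically verifying the $\Phi^{\times N}$-specialness of $\mathcal{Y}_N$ when the induced parameter dynamics $\sigma$ has infinite order; this requires carefully choosing the commuting polarizable endomorphism $\mathbf{\Psi}$ to fully exploit the unconstrained $\mathbf{P}^2$-factors available over $\overline{\C(S^N)}$.
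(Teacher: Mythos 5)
The main gap is in your second step, and it is more serious than you acknowledge. Your candidate container $\mathcal{Y}_N = \mathcal{X}^{N-\dim S}\times (S\times\P^2)^{\dim S}$ (taken as a fiber product over $S$, so living in $S\times(\P^2)^N$; note $\mathcal{X}^N$ and $\Phi^{\times N}$ are fiber products over $S$, so the ambient space is $S\times(\P^2)^N$, not $S^N\times(\P^2)^N$) has generic fiber $\mathbf{X}^{N-\dim S}\times(\mathbf{P}^2)^{\dim S}$, and for $\Phi^{\times N}$-specialness you need a polarizable $\mathbf{\Psi}$ on some $\mathbf{Z}\supseteq\mathbf{X}^{N-\dim S}\times(\mathbf{P}^2)^{\dim S}$ commuting with $F^{\times N}$ under which this fiber is preperiodic. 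The ``twist'' you want to absorb relates $\mathbf{X}$ to a Galois conjugate over $\overline{\C(S)}$, and there is no visible way to encode that in a polarizable endomorphism of $(\mathbf{P}^2)^N$ by permuting factors or applying $F$. You flag this as the main obstacle, but without it the bound $r\le N+\dim S$ is unproven.

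The paper's route sidesteps this entirely and is the idea you are missing. Instead of writing down a product-type container, project $\mathcal{X}^N$ away from $S$: set $Y=\overline{\{(p_1,\dots,p_N):p_i\in\mathcal{X}_s\text{ for some }s\}}\subseteq(\P^2)^N$. Theorem~\ref{thm: main-1} gives $F(Z)\subseteq Z$ after replacing $F$ by an iterate, and hence $F^{\times N}(Y)\subseteq Y$: the point is that once you project away the $S$-coordinate, the ``twist'' by $\sigma$ disappears because $F(\mathcal{X}_s)$ is still some $\mathcal{X}_{s'}$, and all such fibers sweep out the same $Y$. Then $S\times Y$ is honestly $\Phi^{\times N}$-special with $\mathbf{Z}=Y$ and $\mathbf{\Psi}=F^{\times N}$ (no clever commuting map needed), and $\dim Y\le\min\{2N,\dim(\mathcal{X}^N)\}=\min\{2N,N+\dim S\}$ is immediate since $Y$ is a projection of $\mathcal{X}^N$. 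The nonvanishing then reduces, by the projection formula along $\tau_N:S\times(\P^2)^N\to(\P^2)^N$, to $\hat T_{F^{\times N}}^{\wedge r}\wedge[Y]\neq 0$, which follows from $F^{\times N}$-invariance of $Y$ and a standard equidistribution/convergence argument.

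Two further problems in your nonvanishing step: first, $[\mathcal{X}^N]$ is not a box product of $N$ copies of $[\mathcal{X}]$ since $\mathcal{X}^N$ is a fiber product over $S$, not a direct product, so the monomial expansion does not split as external products the way you claim. Second, your conclusion ``$\hat T_{\Phi^{\times N}}^{\wedge r}\wedge[\mathcal{X}^N]\neq 0$ for every $r\le 2N$'' cannot hold in general: $[\mathcal{X}^N]$ has bidegree $(N,N)$ on the $(\dim S+2N)$-dimensional ambient space, so the wedge with $\hat T^{\wedge r}$ vanishes for $r>N+\dim S$; when $N>\dim S$ this rules out the range $(N+\dim S,2N]$. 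The correct claim is nonvanishing at $r=\dim Y$, which is what the paper proves.

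Your first step (applying Theorem~\ref{thm: main-1} to the classifying map image $Z$) and the trivial bound $r\le 2N$ are fine.
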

    \begin{rmk}
    Throughout this paper, we will let $\mathcal{X}^N$ and $\Phi^{\times N}$ denote the $N$-th fiber product of $\mathcal{X}$ and $\Phi$ over $S$.     
    \end{rmk}

    As another consequence of Theorem \ref{thm: main-1}, we combined it with the classification of endomorphisms on $\P^2$ preserving an algebraic $k$-webs obtained in \cite{FP15} to describe all regular polynomial endomorphisms that admit infinitely many periodic curves of bounded degrees.
\begin{thm} \label{thm: classification-summary}
    Let $D$ be any positive integer. Let $F$ be a regular polynomial endomorphism on $\P^2$ of degree $>1$ and suppose that it admits infinitely many periodic curves of degree bounded by $D$. Then, after replacing $F$ with some iterate, it is of one of the following forms:
    \begin{itemize}
        \item $F$ is a polynomial skew product;
        \item $F$ is homogeneous;
        \item there exists a generic finite rational map $\mu : \P^1 \times \P^1 \to \P^2$ and a split polynomial endomorphism $(f,g)$ on $\P^1 \times \P^1$ such that $$\mu \circ (f,g) = F \circ \mu .$$
    \end{itemize}
\end{thm}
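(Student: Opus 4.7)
The plan is to combine Theorem~\ref{thm: main-1} with the classification of endomorphisms of $\P^2$ preserving an algebraic $k$-web obtained in \cite{FP15}.

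First, by pigeonhole on the finitely many degrees $d \leq D$, I would extract a single $d$ for which $F$ admits infinitely many periodic curves of degree exactly $d$. Inside the finite-dimensional moduli space $\mathcal{M}_d$, the Zariski closure of the set of such periodic divisors must contain at least one positive-dimensional irreducible component $Z$, and by construction $Z$ contains a Zariski dense set of divisors corresponding to periodic curves.

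Next, I would apply Theorem~\ref{thm: main-1} to $Z$. After replacing $F$ by some iterate, every $[C'] \in Z$ satisfies $[F(C')] \in Z$, and a generic $C$ with $[C] \in Z$ satisfies $\deg(F(C)) = \deg(C)$. Combined with $\dim Z \geq 1$, the curves parametrized by $Z$ sweep out $\P^2$ and therefore define an $F$-invariant algebraic $k$-web, where $k$ is the number of members of the family passing through a generic point of $\P^2$. Before invoking \cite{FP15}, I would cut $Z$ down to an irreducible component whose generic member is an irreducible reduced curve, so that the resulting web is irreducible; this may require passing to a further iterate of $F$ in order to preserve the chosen component.

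Finally, I would invoke the classification of \cite{FP15}: an endomorphism of $\P^2$ of degree $>1$ preserving an irreducible algebraic web must, up to semiconjugation, be of one of a short list of known types. Restricting the classification to the class of regular polynomial endomorphisms -- where the line at infinity is totally invariant -- and translating each case into an affine normal form yields exactly the three possibilities stated: polynomial skew products, homogeneous endomorphisms, and endomorphisms semiconjugate through a generically finite $\mu : \P^1 \times \P^1 \to \P^2$ to a split product $(f,g)$.

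The main obstacle will be the bridge step: converting the abstract $F$-invariance of $Z \subseteq \mathcal{M}_d$ into an honest irreducible $F$-invariant algebraic web in the sense of \cite{FP15}. One must carefully handle reducible or non-reduced members, common base components, and the possibility that distinct components of $Z$ are permuted by $F$ rather than individually fixed; each of these is resolved by selecting a suitable irreducible component of $Z$ and replacing $F$ by a further iterate. Once an irreducible invariant web is in hand, matching the three cases of the \cite{FP15} classification with the stated normal forms is a matter of direct inspection using the fact that $F$ extends to a regular endomorphism of $\P^2$.
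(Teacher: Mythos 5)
Your proposal correctly identifies the two main ingredients (Theorem~\ref{thm: main-1}, or its precursors, plus the classification of web-preserving endomorphisms of~\cite{FP15}), and the overall shape of the argument is close to the paper's. However, there are two concrete gaps that would need to be filled before the proof works.

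First, you never establish that the family $Z$ you extract has $\dim Z = 1$. This is not cosmetic: for the curves in $Z$ to define a $k$-web you need only finitely many members of the family to pass through a generic point of $\P^2$, and this requires the parameter space to be a curve. Passing to an arbitrary positive-dimensional irreducible component of the Zariski closure of periodic divisors, as you do, could a priori yield $\dim Z \ge 2$, in which case a one-parameter family of curves in $Z$ passes through a generic point and no web arises. The paper sidesteps this by \emph{not} applying Theorem~\ref{thm: main-1} directly: it instead invokes Proposition~\ref{prop: 1-D-cases-summary}, which explicitly produces a $\P^1$-parametrized family $\tau : \P^1 \to Z$ built from the branches of periodic curves at infinity (via Lemma~\ref{lem: transvers-non-super-E} and the blow-up procedure of Proposition~\ref{prop: moduli-DMM-1-D}). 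This $\P^1$-parametrization is what makes $k$ well-defined and finite.

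Second, even with $\dim Z = 1$, the resulting structure is not always a $k$-web with $k \ge 2$. The paper's Proposition~\ref{prop: infinite-per-curve-imply-k-web} dichotomizes on whether the locus $\mathcal{S}$ of pairwise intersections of distinct curves in $Z$ is finite or Zariski dense. When $\mathcal{S}$ is finite, the family is a pencil of curves --- the curves share a common base locus and through a generic point only one member passes, so $k = 1$ --- and the classification theorems of~\cite{FP15} that you cite (Theorems C(2) and E, for $k$-webs with $k \ge 2$) do not apply. The paper handles this case separately by appealing to the Dabija--Jonsson classification~\cite{DJ07} of endomorphisms preserving a pencil, which is how the ``homogeneous'' and the skew-product possibilities in the theorem statement are actually derived. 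Your proposal, by funnelling everything through~\cite{FP15}, simply drops this case. Relatedly, your ``bridge step'' asserts that the family ``therefore define[s] an $F$-invariant algebraic $k$-web'' without verifying the two defining conditions (codimension of the zero locus of the symmetric form, and generic factorization into distinct linear factors); the second half of the proof of Proposition~\ref{prop: infinite-per-curve-imply-k-web} is devoted precisely to this verification, using a resultant argument and the transversality from Lemma~\ref{lem: transvers-non-super-E}, and it is not a formality.
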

A more detailed description of the third case is given in the last section. 
This is an extension of the work in \cite[Section 6]{Xie23} where Xie proved that for a regular polynomial endomorphism whose restriction on the line at infinity is not a polynomial, the existence of infinitely many periodic curves forces the map to be homogeneous \cite[Theorem 6.2]{Xie23}.

\subsection{Some Conditional Results on Degree Stabilization}

Theorem \ref{thm: main-1} can also be interpreted as a Dynamical Manin-Mumford type statement on the moduli space of divisors: instead of looking at subvareities with a dense set of preperiodic points in $\P^2$, we look at the subvarieties in the moduli space $\mathcal{M}_d$ containing a dense set of periodic curves under $F$. The theorem asserts that such a subvariety must be special in the sense that a generic divisor it contains has stabilized degree under the action of $F$, and the subvariety itself is invariant under the induced map.

We expect the degree stabilization should hold in a more general setting, and we propose the following question which was raised in a discussion with Junyi Xie: 
\begin{question}\label{question: degree-stable}
    Let $F$ be an endomorphism of degree $>1$ on $\P^N$ and $Ch_{D,d}(\P^N)$ be the Chow variety consisting of algebraic cycles of $\P^N$ of dimension $D$ and degree $d$. Suppose $Z \subseteq Ch_{D,d}(\P^N)$ is a subvariety containing a Zariski dense set of algebraic cycles corresponding to irreducible periodic subvarieties of $\P^N$ of degree $d$ under $F$. Then is it true that, after replacing $F$ with some iterate, for a generic cycle $V \in Z$, we have $\deg(V) = \deg(F(V))$?
\end{question}
\begin{example}
    Note that even if a curve $C \subseteq \P^2$ is periodic under an endomorphism $F$, the curves in the periodic cycle can have various degrees. For example, under the regular polynomial endomorphism $F(x,y) = (x^2 , y^2 -x)$, we have the following periodic cycle of curves
    $$ \begin{tikzcd}
        V(y) \arrow{r}{F}  &  V(y^2 - x)\arrow{r}{F} & V(y)
\end{tikzcd}  $$
\end{example}

  With a mild assumption on $F$, we will demonstrate that the Dynamical Manin-Mumford Conjecture implies a positive answer to the Question \ref{question: degree-stable}, and a broader version of Theorem \ref{thm: main-1} should hold:
  \begin{defn}
Let $S$ be a smooth, irreducible, quasi-projective variety over $\C$ of positive dimension, and let $M \ge 2$ be an integer.  
Let 
\[
\Phi : S \times \P^M \to S \times \P^M
\]
be a family of surjective endomorphisms, and let $\mathcal{X} \subseteq S \times \P^M$ be an irreducible subvariety of dimension greater than $\dim S$, which projects dominantly onto $S$ and is flat over $S$.  

We say that $(\Phi, \mathcal{X})$ is of \textbf{general type} if for any subvariety $\mathbf{ Y} \subseteq \mathbf{P}^M$ containing $\mathbf{X}$ and invariant under $\mathbf{\Phi}^n$ for some $n > 0$, the only polarizable endomorphisms on $\mathbf{P}^M$ that commute with $\mathbf{\Phi}^n$ on $\mathbf{Y}$ are the iterates of $\Phi$ itself.

In the special case where $\Phi(s, p) = (s, F(p))$ for a fixed surjective endomorphism $F$, we also say that $(F, \mathcal{X})$ is of general type, meaning the same as $(\Phi, \mathcal{X})$ being of general type.
\end{defn}

\begin{thm}\label{thm: DMM-implies-general-main}
    Let $F$ be a surjective endomorphism on $\P^K$ of degree $>1$. Let $Z \subseteq Ch_{D,d}(\P^K)$ be a subvariety in the Chow variety consisting of algebraic cycles of $\P^K$ of dimension $D \leq K-1$ and degree $d$. Suppose $Z$ contains a Zariski dense set of algebraic cycles corresponding to irreducible periodic subvarieties of $\P^K$ under $F$ of degree $d$. 
    Assuming $(F,Z)$ is of general type and Conjecture \ref{conj: DMM} is true, then for every subvariety $C \subseteq \P^K$ and a generic subvariety $C' \subseteq \P^K$ such that $[C], [C'] \in Z$, we have that $[F(C)] \in Z$ and $\deg(C') = \deg(F(C'))$, after replacing $F$ with some iterate.
\end{thm}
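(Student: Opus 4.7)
The plan is to reduce the statement to Conjecture~\ref{conj: DMM} via a fiber-power construction. Let $\mathcal{X} \subseteq Z \times \P^K$ denote the universal family, restricted to an open dense subset of $Z$ over which $\mathcal{X} \to Z$ is flat, and write $\Phi = \id_Z \times F$. A Noetherian argument on the periods of the dense set of periodic members in $Z$ allows us to replace $F$ with some iterate, after which the subset $S^{\ast} := \{z \in Z : F(X_z) = X_z \text{ as subvarieties of } \P^K\}$ is Zariski dense in $Z$.

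Fix an integer $N$ large enough that $\dim Z + ND < NK$ and that a generic member of the family is determined by $N$ of its generic points. Form the $N$-fold fiber product $\mathcal{X}^N \subseteq Z \times (\P^K)^N$ with its endomorphism $\Phi^{\times N} = \id_Z \times F^{\times N}$, and let $\tilde{\mathcal{X}}_N \subseteq (\P^K)^N$ denote its projection; by the dimension count $\tilde{\mathcal{X}}_N$ is a proper irreducible subvariety. For $z \in S^{\ast}$ the diagonal piece $(X_z)^N \subseteq \tilde{\mathcal{X}}_N$ is $F^{\times N}$-invariant, so Fakhruddin's density theorem produces a Zariski dense set of $F^{\times N}$-periodic points on $(X_z)^N$; varying $z \in S^{\ast}$ yields a Zariski dense set of $F^{\times N}$-preperiodic points on $\tilde{\mathcal{X}}_N$.

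Applying Conjecture~\ref{conj: DMM} to $\tilde{\mathcal{X}}_N \subseteq (\P^K)^N$ under the polarized endomorphism $F^{\times N}$ forces $\tilde{\mathcal{X}}_N$ to be either preperiodic under $F^{\times N}$, or to be special: contained in an $F^{\times Nk}$-invariant subvariety $\mathcal{Y}$ admitting a polarizable endomorphism $\psi$ commuting with $F^{\times Nk}$ under which $\tilde{\mathcal{X}}_N$ is preperiodic. The main obstacle is excluding the second alternative. The key is the rigidity of polarizable endomorphisms of $(\P^K)^N$: any such $\psi$ decomposes, up to a permutation of factors, as a product $(\psi_1,\ldots,\psi_N)$ of polarizable self-maps of $\P^K$, and the commutation with $F^{\times Nk}$ forces each $\psi_i$ to commute with $F^k$ on the image of $\mathcal{Y}$ under the $i$-th projection, which contains the generic fiber $\mathbf{X} \subseteq \mathbf{P}^K$ of $\mathcal{X}$. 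The general type assumption on $(F,Z)$ then forces every $\psi_i$ to be an iterate of $F$, so $\psi$ itself is (up to permutation) an iterate of $F^{\times N}$; absorbing the permutation into a further iterate of $F$, we recover preperiodicity of $\tilde{\mathcal{X}}_N$ under $F^{\times N}$.

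After a final replacement of $F$ by an iterate, $\tilde{\mathcal{X}}_N$ is $F^{\times N}$-invariant. Since $N$ was chosen large enough for the ``$N$ generic points determine the member'' property, this invariance produces a rational map $\sigma : Z \dashrightarrow Z$ with $F(X_z) = X_{\sigma(z)}$ for generic $z$. The identity $\sigma|_{S^{\ast}} = \id$ on a Zariski dense set forces $\sigma$ to be the identity as a rational map, so $F(\mathcal{X}) = \mathcal{X}$ as subvarieties of $Z \times \P^K$. Reading off fibers gives $[F(C)] \in Z$ for every $C$ with $[C] \in Z$, while generic flatness of $\mathcal{X} \to Z$ yields $\deg(F(C')) = d = \deg(C')$ for generic $C'$, completing the proof.
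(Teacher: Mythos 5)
Your high-level strategy matches the paper's: form the fiber power $\tilde{\mathcal{X}}_N \subseteq (\P^K)^N$, invoke Conjecture~\ref{conj: DMM}, use the general type hypothesis to kill the ``special'' alternative, and then read off fibers. Your mechanism for the last step — using the ``$N$ generic points determine the member'' property to produce a rational self-map $\sigma : Z \dashrightarrow Z$ — is a reasonable alternative to the paper's inductive argument. However, there are two genuine gaps.

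First, the opening reduction is not valid: no single iterate of $F$ can make $S^{\ast} = \{z \in Z : F(X_z) = X_z\}$ Zariski dense in general, because the periods of the dense set of periodic members in $Z$ need not be bounded. If the periods $p_i$ of the periodic members $z_i$ grow without bound, then for any fixed $m$ the set $\{z_i : p_i \mid m\}$ is finite and cannot be dense in a positive-dimensional $Z$; a Noetherian argument cannot repair this, since a countable union of proper closed subsets can be dense without any single one being all of $Z$. This flaw propagates to your final step: without $S^{\ast}$ dense you cannot conclude $\sigma = \mathrm{id}$, and in any case that conclusion is strictly stronger than what the theorem asserts — it would say $F(C) = C$ for generic $[C]\in Z$ rather than just $[F(C)]\in Z$, which is false when periods are unbounded. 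You should work with the dense set of genuinely periodic (not fixed) members throughout; any rational map $\sigma$ with $F(X_z) = X_{\sigma(z)}$ for generic $z$ already gives the conclusion for generic $C$, and the paper's induction on $N$ is what extends it to every $C$ with $[C]\in Z$.

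Second, you pass directly from preperiodicity of $\tilde{\mathcal{X}}_N$ under $F^{\times N}$ to invariance ``after a final replacement of $F$ by an iterate.'' Preperiodicity only says that $(F^{\times N})^m(\tilde{\mathcal{X}}_N)$ is periodic for some $m \ge 0$; it does not follow that $\tilde{\mathcal{X}}_N$ equals that periodic image, and replacing $F$ by an iterate does not help. The paper closes this gap with a dedicated argument: since $\tilde{\mathcal{X}}_N$ is the closure of the union of $C^N$ over a Zariski dense set of periodic $C$, each such $C^N$ returns to itself infinitely often under $F^{\times N}$; a pigeonhole argument over the finitely many images $\{(F^{\times N})^{m+j}(\tilde{\mathcal{X}}_N)\}_{j}$ in the periodic tail then shows that a Zariski dense collection of the $C^N$ lies in a single one of them, so by irreducibility and dimension $\tilde{\mathcal{X}}_N$ coincides with that image and is genuinely periodic. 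This step needs to be supplied.
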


Furthermore, we show that Conjecture \ref{conj: DM24-conj-1,1} implies an even stronger stabilization property for the degrees of images in families of endomorphisms:
\begin{prop}\label{conj: cor-degree-stable-family}
     Let $S$ be a smooth and irreducible quasi-projective variety defined over $\C$ of positive dimension and $M \geq 2$ be a positive integer. Let $\Phi : S \times \P^M \to S \times \P^M$ be a family of endomorphism such that for every point $(s,p) \in S \times \P^M$ 
     $$\Phi(s,p) = (s, F_s(p))$$
     for an endomorphism $F_s$ of degree $>1$. Let $\mathcal{X} \subseteq S \times \P^M$ be an irreducible subvariety of dimension $> \dim S$ that projects dominantly to $S$ and is flat over $S$. Assume $(\Phi,\mathcal{X})$ is of general type.
     
     Suppose that for a Zariski dense subset of $s \in S$, $\mathcal{X}_s$ is preperiodic under $\Phi_s$. If Conjecture \ref{conj: DM24-conj-1,1} holds, then there exists positive integers $n,m$ and an infinite set of positive integers $I$ such that for any pair of integers $k_1, k_2 \in I$ we have that, $$\deg(F^{m + nk_1}_s(\mathcal{X}_s)) = \deg(F^{m + nk_2}_s(\mathcal{X}_s)),$$
     for a generic point $s \in S$.
\end{prop}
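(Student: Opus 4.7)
The overall plan is to use Conjecture~\ref{conj: DM24-conj-1,1} to locate $\mathcal{X}$ inside a $\Phi$-preperiodic supervariety, and then argue that the degree sequence $D_j := \deg\bigl(F_s^j(\mathcal{X}_s)\bigr)$ for a generic $s \in S$ cannot diverge to infinity: some value is attained infinitely often, which is equivalent to the conclusion sought. First I promote the hypothesis to a density statement for $\Phi$-preperiodic points inside $\mathcal{X}$. For each $s$ in the given Zariski-dense subset of $S$, after some iteration $n_0$ the image $Y_s = F_s^{n_0}(\mathcal{X}_s)$ is $F_s$-periodic; since $F_s \colon \P^M \to \P^M$ is a surjective morphism of equidimensional projective varieties it is finite, so $F_s^{n_0}|_{\mathcal{X}_s}$ is a finite surjection onto $Y_s$. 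Fakhruddin's density theorem gives Zariski-dense $F_s$-periodic points in $Y_s$, whose preimages are $F_s$-preperiodic and dense in $\mathcal{X}_s$; the union over the dense set of good $s$ is Zariski-dense in $\mathcal{X}$ by its irreducibility and dominant projection to $S$. Applying Conjecture~\ref{conj: DM24-conj-1,1} yields $\hat{T}^{r}_\Phi \wedge [\mathcal{X}] \neq 0$ with $r = r_{\Phi, \mathcal{X}}$, and in particular exhibits a $\Phi$-special subvariety $\mathcal{Y} \supseteq \mathcal{X}$ of relative dimension $r$.

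Using the general-type hypothesis with $\mathbf{Z} = \mathbf{P}^M$, the commuting polarizable endomorphism in the specialness data must be an iterate of $\mathbf{\Phi}$, and so $\mathcal{Y}$ is in fact $\Phi$-preperiodic. After replacing $F$ by a suitable iterate (and, if necessary, passing to a forward image of $\mathcal{X}$, which only shifts the indexing of $D_j$ and preserves the conclusion) we may assume $\Phi(\mathcal{Y}) = \mathcal{Y}$ and $\Phi^j(\mathcal{X}) \subseteq \mathcal{Y}$ for every $j \geq 0$. If $\mathcal{X}$ is itself $\Phi$-preperiodic, then $\Phi^{m+nk}(\mathcal{X}) = \Phi^m(\mathcal{X})$ as subvarieties for some $m, n$ and every $k \geq 0$; taking generic fibers gives equal degrees and the conclusion with $I = \Z^+$. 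Otherwise, the minimality of $r$ forces the orbit closure $\overline{\bigcup_{j \geq 0} \Phi^j(\mathcal{X})}$ to coincide with $\mathcal{Y}$: a strictly smaller irreducible $\Phi$-invariant subvariety of $\mathcal{Y}$ containing $\mathcal{X}$ would itself be $\Phi$-preperiodic and would contradict the defining property of $r_{\Phi, \mathcal{X}}$.

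The main obstacle is this non-preperiodic case: one must show that, even when the orbit of $\mathcal{X}$ is Zariski-dense in $\mathcal{Y}$, the sequence $D_j$ does not tend to infinity. My plan is to work with the numerical classes of $\Phi^j(\mathcal{X})_s$ inside a finite-dimensional cohomological invariant of a generic fiber $\mathcal{Y}_s$, and to combine the spectral behavior of the action induced by $\Phi|_{\mathcal{Y}_s}$ with the quantitative non-vanishing $\hat{T}^{r}_\Phi \wedge [\mathcal{X}] \neq 0$ so as to conclude that infinitely many of these classes lie in a bounded subset. Since $D_j$ is obtained by pairing such a class with a fixed polarization, this produces an infinite subset of indices on which $D_j$ takes a common value, and extracting an arithmetic progression gives the required $m$, $n$, and $I$. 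This spectral step is the delicate part of the argument, and it is here that the finer dynamical content of Conjecture~\ref{conj: DM24-conj-1,1}, beyond its purely cohomological manifestation, must be exploited.
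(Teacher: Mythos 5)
Your opening reduction (density of preperiodic points on $\mathcal{X}$, then invoke Conjecture~\ref{conj: DM24-conj-1,1}, then use general type to turn the $\Phi$-special supervariety into a $\Phi$-preperiodic one) is sound in spirit, but applying the conjecture to $\mathcal{X}$ itself ($N = 1$) is not enough. The non-vanishing $\hat{T}^{r_{\Phi,\mathcal{X}}}_\Phi \wedge [\mathcal{X}] \neq 0$ only yields $r_{\Phi,\mathcal{X}} \le \dim \mathcal{X} = \dim_S\mathcal{X} + \dim S$, and when $\dim_S\mathcal{X} + \dim S \ge M$ this is vacuous: the $\Phi$-special $\mathcal{Y}$ you extract can be all of $S \times \P^M$, which is trivially preperiodic and carries no information. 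The paper overcomes this by working with the $N$-th fiber products $\mathcal{X}^N \subseteq S \times (\P^M)^N$ and $\Phi^{\times N}$ with $N > \dim S$: then $\dim \mathcal{X}^N = N\dim_S\mathcal{X} + \dim S < NM$, so the special subvariety containing $\mathcal{X}^N$ is genuinely proper. Without this fiber-product amplification, your argument does not get off the ground when $S$ has large dimension.

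The second and more serious gap is the one you name yourself: you never actually carry out the step that controls $D_j = \deg(F_s^j(\mathcal{X}_s))$ when the orbit closure of $\mathcal{X}$ is dense in the (possibly singular) invariant $\mathcal{Y}$. Your proposed "spectral / numerical-class" strategy is not developed, and it is far from clear it can be made to work: one would need to compare degrees in the ambient $\P^M$ with classes in some cohomology of $\mathcal{Y}_s$ under $(F_s|_{\mathcal{Y}_s})_*$, and this is exactly where the delicacy lies. The paper instead proves Proposition~\ref{prop: special-implies-deg} by a projection-and-induction argument: given a proper $\Phi^{\times N}$-preperiodic $\mathcal{Y} \supseteq (\Phi^{\times N})^l(\mathcal{X}^N)$ of codimension $\le \min\{N-1, \dim S\}$, project to $S \times (\P^M)^{N-1}$; either the projection again contains $\mathcal{X}^{N-1}$ with small codimension (and one inducts on $N$), or generic flatness of the fibers of $\mathcal{Y} \to \pi(\mathcal{Y})$ over a dense open $U$ bounds $\deg(\mathcal{Y}_p)$ uniformly, and tracking which iterates $(\Phi^{\times(N-1)})^{m+nk}(\mathcal{X}^{N-1})$ hit $U$ yields, via the pigeonhole principle, an infinite arithmetic progression of indices with equal degree. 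This concrete Hilbert-polynomial argument replaces your conjectural spectral step entirely and is the actual engine of the proof.
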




We also establish a degree stabilization result for families 
$\Phi : \mathbb{A}^1 \times \mathbb{P}^2 \to \mathbb{A}^1 \times \mathbb{P}^2$ 
of regular polynomial endomorphisms, under suitable assumptions on the behavior of $\mathcal{X}$ at infinity (Theorem \ref{thm: line-at-infinty-poly} and \ref{thm: line-at-infinity-algebraic}). 
Our argument combines analysis of branches along $H_\infty$ with results from~\cite{FG22} 
and, conditionally, on~\cite[Conjecture~6.1]{De16} and the classification of semiconjugate pairs in~\cite{Pa23}.

\section{Outline of the Paper}

Section~\ref{sect: preli} establishes several technical lemmas that will be essential for the subsequent arguments.

In Section~\ref{sect: proof-of-main}, we prove Theorem~\ref{thm: main-1}. 
A key technique used in the proof of Theorem \ref{thm: main-1} is inspired by \cite[Section 5 and 6]{Xie23}. It establishes that, for a regular polynomial endomorphism the invariant algebraic curve passing through a non-superattracting fixed point of $F| _{H_\infty}$ is unique whenever it exists, where $H_\infty = \P^2 \setminus \A^2$. Moreover, the intersection must be transverse. A slight adaptation of this result (Lemma \ref{lem: transvers-non-super-E}), combined with suitable blow-up procedures along $H_\infty$, 
allows us to obtain a reparametrization of a family of divisors containing infinitely many periodic curves. 
This forms a crucial step in the proof of Theorem~\ref{thm: main-1}.

We also explain at the end of this section how the Dynamical Manin–Mumford Conjecture suggests that the same expectation should hold for endomorphisms on $\P^K$ in general.

In Section~\ref{sect: impl-RDMM}, we show that our main result, Theorem~\ref{thm: main-1}, implies a weaker version of the \emph{relative Dynamical Manin–Mumford Conjecture} (Cojecture~\ref{conj: DM24-conj-1,1}) proposed by DeMarco and Mavraki.

Section~\ref{sect: degree-stabilization} contains conditional results. 
We show that the relative Dynamical Manin–Mumford Conjecture predicts a degree stabilization phenomenon for families of endomorphisms and subvarieties containing a Zariski-dense set of periodic subvarieties. 
We verify this expectation for families 
$\Phi : \mathbb{A}^1 \times \mathbb{P}^2 \to \mathbb{A}^1 \times \mathbb{P}^2$ 
of regular polynomial endomorphisms, under unramification assumptions on the behavior of $\mathcal{X}$ at infinity. 
Our argument relies on identifying when a marked point 
$a(t) \in \mathbf{X} \cap H_\infty$ 
becomes superattracting for infinitely many parameters $t_0 \in \A^1$. 
In the polynomial case, this follows from~\cite{FG22} (see Theorem~\ref{thm: line-at-infinty-poly}); in the general case, the proof depends conditionally on~\cite[Conjecture~6.1]{De16} and the argument relies on the classification of semiconjugate pairs in~\cite{Pa23} (see Theorem~\ref{thm: line-at-infinity-algebraic}).

Finally, Section~\ref{sect: classification} introduces the notion of $k$-webs following~\cite{FP15}. 
Using Theorem~\ref{thm: main-1}, we show that a regular polynomial endomorphism admitting infinitely many periodic curves of bounded degree must preserve either an algebraic $k$-web or a pencil of curves. 
Combining this with the classification results of~\cite{FP15} and~\cite{DJ07} yields a complete description of such endomorphisms, proving Theorem~\ref{thm: classification-summary}.

\section{Preliminaries}\label{sect: preli}
In this section, we collect some technical lemmas that will be useful in later proofs.

We begin by clarifying the notion of a resolution tree:
\begin{defn}
    By a {\bf resolution tree of depth $N$ at $p$ with respect to a curve $C$}, where $p \in \P^2$, $N \in \Z^+$, and $C \subset \P^2$ passes through $p$, we mean the tree obtained by iteratively blowing up points where the strict transform of $C$ intersects the exceptional divisors. The root of the tree is $p$ (level $0$), and the process is repeated until the tree has depth $N$.
\end{defn}

\begin{lem}\label{lem: finite-blows-up-deter}
    Let $D$ be a positive integer. If $C_1$ and $C_2$ are two affine curves in $\A^2$ passing through $(0,0) $ of degree $\leq D$ that have the same resolution tree of depth $D^2 +1$ at the point $(0,0)$, then $C_1 = C_2$.
 \end{lem}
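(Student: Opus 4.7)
The plan is a proof by contradiction, combining B\'ezout's theorem in $\P^2$ with the classical formula expressing the local intersection multiplicity of two curves as a sum over infinitely near points. Suppose for contradiction that $C_1 \neq C_2$ share the same resolution tree of depth $D^2 + 1$ at $(0,0)$, and fix defining polynomials $f_1, f_2 \in \C[x,y]$ of degree at most $D$. The tree hypothesis produces a chain of infinitely near points $(0,0) = q_0, q_1, \dots, q_{D^2 + 1}$, one at each depth, through which the strict transforms of both $C_1$ and $C_2$ pass, and at which the multiplicities $m_{q_i}(C_1) = m_{q_i}(C_2) \geq 1$ agree.

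First I would handle the case $\gcd(f_1, f_2) = 1$, in which $C_1$ and $C_2$ share no common component. The classical formula
\[
I_{(0,0)}(C_1, C_2) \;=\; \sum_{q} m_q(C_1)\, m_q(C_2),
\]
with the sum over all infinitely near points $q$ of $(0,0)$, together with the chain above yields
\[
I_{(0,0)}(C_1, C_2) \;\geq\; \sum_{i=0}^{D^2 + 1} m_{q_i}(C_1)\, m_{q_i}(C_2) \;\geq\; D^2 + 2.
\]
B\'ezout's theorem, on the other hand, bounds this local intersection by $\deg C_1 \cdot \deg C_2 \leq D^2$, giving a contradiction in the coprime case.

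For the general case I would reduce to the coprime case by writing $f_i = h\, g_i$ with $h = \gcd(f_1, f_2)$, so that $g_1, g_2$ are coprime polynomials of degree at most $D - \deg h \leq D$. Since the multiplicity of a strict transform at an infinitely near point is additive under products of defining polynomials, the equality $m_q(f_1) = m_q(f_2)$ at every vertex of the shared tree forces $m_q(g_1) = m_q(g_2)$ at those vertices. Running the coprime-case argument for $g_1, g_2$ then contradicts B\'ezout exactly as above and yields $C_1 = C_2$.

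The main technical obstacle I anticipate is precisely this reduction step: when $h$ has components passing through $(0,0)$, the chain of vertices along which $m_{q_i}(g_i) \geq 1$ may be strictly shorter than the full chain in the tree of $V(f_i)$, since some levels can be contributed entirely by $V(h)$. Handling this will likely require either a careful tree-theoretic argument showing that the depth-$(D^2+1)$ chain for $f_i$ restricts to a sufficiently long chain for $g_i$, or a direct intersection-theoretic decomposition of $V(f_1) \cdot V(f_2)$ into the contributions of $V(h)^2$, $V(h)\cdot V(g_i)$, and $V(g_1)\cdot V(g_2)$, isolating the coprime part so that B\'ezout can be applied cleanly to it.
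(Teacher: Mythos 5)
Your treatment of the coprime case is correct, and in substance it coincides with the paper's argument: the paper's blow-up induction establishes $(C_1,C_2)_{(0,0)} \geq D^2+1$, which is precisely Noether's formula applied to the chain of shared infinitely near points, and both arguments then close with B\'ezout. So the coprime halves of the two proofs are the same, merely phrased differently.

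The obstruction you flag in the common-component case is, however, fatal rather than merely technical: as written, the lemma is false once a common component of $C_1$ and $C_2$ passes through $(0,0)$ while the residual factors avoid it. With $D=2$, take $C_1 = V\bigl(x(y-1)\bigr)$ and $C_2 = V\bigl(x(y-2)\bigr)$. Both have degree $2$ and pass through $(0,0)$, and their resolution trees at $(0,0)$ of every depth are identical: each is the chain generated by $V(x)$ alone, since $V(y-1)$ and $V(y-2)$ never meet the center of any blow-up. Yet $C_1 \neq C_2$. In your reduction $f_i = h\, g_i$, here $h = x$ and $g_1 = y-1$, $g_2 = y-2$, which miss the origin entirely; the shared depth-$(D^2+1)$ chain therefore carries no information about $g_1, g_2$, and the coprime argument never gets started. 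This is exactly the failure mode you anticipated, and it cannot be patched without an extra hypothesis. The paper's own proof has the same blind spot: when $C_1, C_2$ share a component through $(0,0)$, the inequality $(C_1,C_2)_{(0,0)} \geq D^2+1$ is vacuously true (the local intersection number is infinite), and the closing appeal to B\'ezout rules out only the coprime scenario, not a shared component. The statement is correct, and both your argument and the paper's go through, once one adds the hypothesis that $C_1, C_2$ are irreducible (or at least that every component of each passes through $(0,0)$) --- which is what the downstream applications to periodic curves of a dominant endomorphism in fact provide.
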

 \begin{proof}

     Let $\pi_i$, $i \in \{1,2, \dots, D^2+1\}$ denote the (sequence of) blows-up map at $(0,0)$ which gives us the resolution tree of depth $i$ at $(0,0)$ with respect to both $C_1$ and $C_2$, as they share the same resolution tree.

     We show inductively that for any positive integer $M \leq D^2+1$, we have 
     $$ (C_1, C_2)_{(0,0)} \geq M + (\tilde{C}^{(M)}_1, \tilde{C}^{(M)}_2)_{p_M}$$
     where $\tilde{C}^{(M)}_i$ is the strict transformation of $C_i$ under $\pi_M$, $i \in \{1,2\}$ and $p_M$ is a point in the intersection of $\tilde{C}^{(M)}_1$, $\tilde{C}^{(M)}_2$ and an exceptional divisor introduced when we construct the $M$-th level of the resolution tree. The existence of such a $p_M$ is guaranteed by our assumption that $C_1$ and $C_2$ share the resolution tree up to depth $D^2+1$.
     
     Note that $\pi_1$ is the first blow up map at $(0,0)$, which introduces an exceptional divisor $E_1$ in the preimages of $(0,0)$ under $\pi_1$. Then we have 
     $$ \pi_1^* C_i = \tilde{C}^{(1)}_i + m_i E_1$$
     where $m_i > 0$ is the multiplicity of $E_1$ in the total transformation of $C_i$ under $\pi_1$, for $i \in \{1,2\}$. 
     Then, since $\tilde{C}^{(1)}_i \cdot E_1 = m_i$ for $i \in\{1,2\}$, we have
     \begin{align} \label{eq: blow-up-lemma-eq-1}        
     (C_1 \cdot C_2)_{(0,0)} &= (\tilde{C}^{(1)}_1 \cdot \tilde{C}^{(1)}_2)_{E_1} + m_2\tilde{C}^{(1)}_1 \cdot E_1 + m_1 \tilde{C}^{(1)}_2 \cdot E_1 + m_1m_2 E_1 \cdot E_1 \nonumber \\
      &= (\tilde{C}^{(1)}_1 \cdot \tilde{C}^{(1)}_2)_{E_1} + m_1m_2 \geq 1 + (\tilde{C}^{(1)}_1 \cdot \tilde{C}^{(1)}_2)_{E_1},\end{align}
     where 
     $$ (\tilde{C}^{(1)}_1 \cdot \tilde{C}^{(1)}_2)_{E_1} \coloneqq \sum_{p \in \tilde{C}^{(1)}_1 \cap \tilde{C}^{(1)}_2 \cap E_1} (\tilde{C}^{(1)}_1 \cdot \tilde{C}^{(1)}_2)_{p}.$$ This verifies the base case.

     Now, we assume that the statement holds for a positive integer $M < D^2 + 1$ and we show that it also holds for $M + 1$. 
     We do the further expansion of $$(\tilde{C}^{(M)}_1, \tilde{C}^{(M)}_2)_{p_M}$$ by taking a blow-up $\pi_{p_M}$ at $p_M$. 
Then, we have 
\begin{align}\label{eq: blow-up-inductive-1}
    &(\tilde{C}^{(M)}_1, \tilde{C}^{(M)}_2)_{p_M}  \nonumber\\&= (\tilde{C}^{(M),p_M}_1 \cdot  \tilde{C}^{(M),p_M}_2)_{E_{p_M}} + m^{p_M}_1 \tilde{C}^{(M),p_M}_2 \cdot E_{p_M} \nonumber \\&+ m^{p_M}_2 \tilde{C}^{(M),p_M}_1 \cdot E_{p_M} + m^{p_M}_1m^{p_M}_2 E_{p_M}\cdot E_{p_M}&\nonumber\\
          &= (\tilde{C}^{(M),p_M}_1 \cdot  \tilde{C}^{(M),p_M}_2)_{E_{p_M}} + m^{p_M}_1m^{p_M}_2,&
\end{align} 
where $$ \pi^*_{p_M} \tilde{C}^{(M)}_i =\tilde{C}^{(M),p_M}_i + m^{p_M}_i E_{p_M} ,$$
$m^{p_M}_i > 0$, $i \in \{1,2\}$ and $E_{p_M}$ is the exceptional divisor introduced by $\pi_{p_M}$.

Notice that the assumption that $C_1$ and $C_2$ share the resolution tree at $(0,0)$ up to depth $D^2+1$ implies that there exists a $$p_{M+1} \in \tilde{C}^{(M),p_M}_1 \cap \tilde{C}^{(M),p_M}_2 \cap E_{p_M}.$$

Also, there exists a sequence of blows-up $\pi'_{M+1}$ such that 
$$ \pi_{M+1} = \pi'_{M+1} \circ \pi_{p_M} \circ \pi_M$$
and 
$\tilde{C}^{(M+1)}_i$ is the strict transformation of $\tilde{C}^{(M),p_M}_i$ under $\pi'_{M+1}$, where $i \in \{1,2\}$. 

Then 
\begin{align}\label{eq: blow-up-inductive-2}
    (\tilde{C}^{(M),p_M}_1 \cdot  \tilde{C}^{(M),p_M}_2)_{E_{p_M}} \geq(\tilde{C}^{(M),p_M}_1 \cdot  \tilde{C}^{(M),p_M}_2)_{p_{M+1}} \nonumber\\ = ((\pi'_{M+1})^*\tilde{C}^{(M),p_M}_1 \cdot  (\pi'_{M+1})^*\tilde{C}^{(M),p_M}_2)_{p_{M+1}} \geq (\tilde{C}^{(M+1)}_1 \cdot  \tilde{C}^{(M+1)}_2)_{p_{M+1}}.
\end{align} 
     
     Now the induction hypothesis 
     $$ (C_1, C_2)_{(0,0)} \geq M + (\tilde{C}^{(M)}_1, \tilde{C}^{(M)}_2)_{p_M}$$
     together with (\ref{eq: blow-up-inductive-1}) and (\ref{eq: blow-up-inductive-2}) imply that 
     $$ (C_1, C_2)_{(0,0)} \geq M+1 + (\tilde{C}^{(M+1)}_1 \cdot  \tilde{C}^{(M+1)}_2)_{p_{M+1}}. $$

     Then the induction concludes the statement. Hence, plug in $M = D^2+1$, we have 
    $$ (C_1, C_2)_{(0,0)}  \geq  D^2 + 1 .$$
    Since, $\deg(C_1) = \deg(C_2) = D$, by Bézout’s intersection theorem we conclude that $C_1 = C_2$.

 \end{proof}

 \begin{lem}\label{lem: infinite-intersect-with-E}
    Let $\mathcal{C}$ be an infinite set of curves in $\P^2$ of uniformly bounded degree $\leq D$ passing through a single point $p \in \P^2$. There exists a blow up $\pi$ of length bounded by $D^2+1$ at the point $p$ and an exceptional divisor $E \subseteq \pi^{-1}(p)$ such that $\bigcup_{C \in \mathcal{C}} \pi^\# C \cap E$ contains infinitely many distinct points in $E$.
 \end{lem}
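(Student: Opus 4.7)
The approach is a proof by contradiction that combines a pigeonhole argument on strict transforms along a chain of blow-ups with the local intersection estimate used in the proof of Lemma~\ref{lem: finite-blows-up-deter}. The plan is to assume the conclusion fails and build an infinite subfamily of $\mathcal{C}$ whose members all pass through a long chain of infinitely-near points above $p$; Bezout together with the iterated intersection bound then forces two such curves to coincide.

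Assume that for every blow-up $\pi$ above $p$ of length at most $D^2+1$ and every exceptional divisor $E \subseteq \pi^{-1}(p)$, the set $\bigcup_{C \in \mathcal{C}} \pi^\# C \cap E$ is finite. The first step is to construct inductively, for each $k = 0, 1, \dots, D^2+1$, an infinite subfamily $\mathcal{C}_k \subseteq \mathcal{C}$ and a point $q_k$ lying above $p$ in a suitable blow-up of $\P^2$, with $q_0 = p$, with $q_{k+1}$ located on the exceptional divisor introduced by blowing up $q_k$, and with the strict transform of every $C \in \mathcal{C}_k$ passing through $q_k$. For the inductive step, blow up $q_k$ to produce a new exceptional divisor $E_k$. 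Since the composite blow-up has length $k+1 \le D^2+1$, the contradiction hypothesis forces the set $\bigcup_{C \in \mathcal{C}_k}(\text{strict transform of }C) \cap E_k$ to be finite, and pigeonhole then produces a point $q_{k+1} \in E_k$ through which the strict transforms of an infinite subfamily $\mathcal{C}_{k+1} \subseteq \mathcal{C}_k$ pass.

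After $D^2+1$ iterations, pick any two distinct irreducible curves $C_1, C_2 \in \mathcal{C}_{D^2+1}$ (the reducible case reduces to this by replacing each $C \in \mathcal{C}$ by an irreducible component through $p$ and extracting an infinite subfamily of distinct such components). Since their strict transforms share the chain $q_1, \dots, q_{D^2+1}$, iterating the intersection inequality established in the proof of Lemma~\ref{lem: finite-blows-up-deter} yields $(C_1 \cdot C_2)_p \ge D^2+1$. Bezout's theorem applied to two irreducible curves of degree at most $D$ in $\P^2$ then forces $C_1 = C_2$, contradicting the infinitude of $\mathcal{C}_{D^2+1}$.

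The main step requiring care is the inductive pigeonhole, which relies on the finiteness of intersections provided by the contradiction hypothesis at every level; the chain length $D^2+1$ is calibrated precisely so that the iterated intersection estimate pushes $(C_1\cdot C_2)_p$ past the Bezout bound $D^2$, producing the contradiction at the final stage.
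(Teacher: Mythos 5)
Your proof is correct and takes essentially the same approach as the paper's: both argue by contradiction, pigeonholing to extract curves that share a tower of infinitely-near points above $p$, then invoking the intersection estimate underlying Lemma~\ref{lem: finite-blows-up-deter} together with B\'ezout to force two curves to coincide. The only difference is cosmetic — the paper builds the full resolution tree and applies pigeonhole once at the end, whereas you build a single chain and pigeonhole level by level to thin the family.
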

 \begin{proof}
      Suppose, for the sake of contradiction, that for any blow up $\pi$ at $p$ of length not greater than $D^2+1$ and any exceptional divisor $E$ in $\pi^{-1}(p)$ we have $|\bigcup_{C \in \mathcal{C}} \pi^\# C \cap E | < \infty$. We build a resolution tree of depth $D^2+1$ at $p$ by recursively blowing up all the points in the intersections of the strict transformation of $C \in \mathcal{C}$ with exceptional divisors. By our assumption, there are only finitely many exceptional divisors in this resolution tree and each exceptional divisors only intersects with $\mathcal{C}$ at a finite set of points. Therefore, by the Lemma \ref{lem: finite-blows-up-deter} and the pigeonhole principle, there are only finitely many distinct curves in $\mathcal{C}$, which is a contradiction. Thus, there must exists a divisor $E \subseteq \pi'^{-1}(p)$, with a blow up $\pi'$ at $p$ of length not greater than $D^2+1$ (pick a branch of the resolution tree), such that $\bigcup_{C \in \mathcal{C}} \pi'^\# C \cap E$ is an infinite set.
 \end{proof}

The following lemma is an adaptation of \cite[Lemma 5.11]{Xie23} for our purposes.  The proof follows the original almost verbatim; we will explain here the necessary changes.
\begin{lem}\label{lem: transvers-non-super-E}
    Let $F$ be a regular polynomial endomorphism on $\A^2$ of degree $>1$. Let $X$ be a compactification of $\A^2$ which is either $\P^2$ or constructed by a sequence of blows-up of some points on $H_\infty  = \P^2 \setminus\A^2$. Let $E$ be an irreducible component of $X \setminus \A^2$ such that $F$, after being replaced with some iterate, extends to a finite morphism, $f$, on $E$. Let $o \in E$ be a non-superattracting fixed point of $f$. Then there exists a unique irreducible formal curve $\hat{C}$ at $o$ which is $F$-invariant and not contained in $E$. Moreover, it intersects $E$ transversely at $o$.
\end{lem}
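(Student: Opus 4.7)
The plan is to reduce the lemma to a purely local formal computation at $o$, mirroring the structure of Xie's proof of \cite[Lemma 5.11]{Xie23}. First I would choose formal coordinates $(x,y)$ on $X$ centered at $o$ such that $E = \{y=0\}$ locally; this is possible since $X$ is smooth at $o$ and $E$ is a smooth divisor through $o$. Because $F$ preserves $E$, fixes $o$, and extends to $X$, its local expression takes the form
$$F(x,y) \;=\; \bigl(P(x,y),\, y^{m} U(x,y)\bigr)$$
for some integer $m \geq 1$, with $P(x,0) = f(x)$ and $P(0,0)=0$. The non-superattracting hypothesis translates to $\lambda := P_x(0,0) = f'(o) \neq 0$.

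Next I would parametrize a candidate irreducible formal curve transverse to $E$ at $o$ as the graph $x = \phi(y)$ with $\phi \in y\,\C\lb y \rb$. The invariance condition $F(\hat C) \subseteq \hat C$ is equivalent to the functional equation
$$P\bigl(\phi(y),\, y\bigr) \;=\; \phi\bigl(y^{m} U(\phi(y), y)\bigr),$$
which I would solve recursively for the Taylor coefficients of $\phi(y) = \sum_{n \geq 1} a_n y^n$. Matching $y^n$-coefficients yields a linear relation $c_n a_n = R_n(a_1, \dots, a_{n-1})$, where $c_n = \lambda$ when $m \geq 2$ and $c_n = \lambda - \mu^n$ when $m = 1$, with $\mu := U(0,0)$. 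As long as every $c_n$ is nonzero, this recursion determines $\phi$ uniquely, and the graph form automatically makes $\hat C$ transverse to $E$ at $o$. To upgrade this to uniqueness among all invariant formal curves through $o$ not contained in $E$, I would rule out invariant curves tangent to $E$ by a valuation argument: the $y$-adic valuation along a tangent invariant curve would have to evolve under iteration in a way incompatible with $\lambda \neq 0$.

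The only delicate point is the possible resonance $\lambda = \mu^n$ in the unramified ($m=1$) case, where the coefficient $c_n$ vanishes. Xie handles this by passing to an appropriate iterate of $F$ (permitted by the statement of our lemma) and invoking the global structure of regular polynomial endomorphisms to constrain the eigenvalues $\lambda$ and $\mu$ at fixed points on the boundary; I would borrow this argument verbatim. Beyond this, the transition from $(\P^2, H_\infty)$ in Xie's setting to $(X, E)$ in ours is purely cosmetic, since the computations are local at $o$ and the local pair $(X, E)$ is geometrically indistinguishable from the local pair $(\P^2, H_\infty)$ near a point. The resonance elimination is the step I expect to be the main obstacle, as it is the only one that leaves the purely local formal setting and genuinely uses the global algebraic nature of $F$ as a regular polynomial endomorphism.
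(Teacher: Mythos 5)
Your approach — solve the functional equation $P(\phi(y),y) = \phi(y^m U(\phi(y),y))$ by matching Taylor coefficients — is a genuinely different route from the paper's. The paper (following Xie's Lemma~5.11) does not use a formal recursion at all: it works over $L = \C((t))$ with the $t$-adic norm, forms the affinoid $U = \{|x| \le |t|,\, |y| \le |t|\}$ in the Berkovich analytification, and invokes Xie's semiconjugacy theorem (Theorem~8.3 in the appendix of \cite{Xie25}) to obtain a retraction $U \to Y = E^{an}_L \cap U$ whose fibers are the invariant curves. Existence, uniqueness, and transversality all come out of that single analytic statement at once. Your method instead derives the linear recursion $c_n a_n = R_n(a_1,\dots,a_{n-1})$, which is more elementary and certainly works cleanly whenever $m \ge 2$, since then $c_n = \lambda = f'(o) \neq 0$ for every $n$ by the non-superattracting hypothesis.

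The genuine gap is the resonance case $m = 1$, $\lambda = \mu^n$. You acknowledge it but claim that Xie resolves it ``by passing to an appropriate iterate of $F$ and invoking the global structure of regular polynomial endomorphisms to constrain the eigenvalues $\lambda$ and $\mu$,'' and that you would borrow that argument verbatim. This is a mischaracterization of Xie's proof — Xie never runs the formal recursion and so never confronts resonance; his Berkovich argument sidesteps it entirely. Moreover, passing to an iterate $F^k$ replaces $(\lambda, \mu)$ by $(\lambda^k, \mu^k)$, and if $\lambda = \mu^n$ then $\lambda^k = \mu^{nk} = (\mu^k)^n$, so the resonance condition is preserved: iterating does not clear the vanishing denominator. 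In Xie's original setting $X = \P^2$, $E = H_\infty$, one always has $m = \deg F \ge 2$ (because $F^* H_\infty = (\deg F)\,H_\infty$), so the issue never arises there; but your lemma is stated for an arbitrary exceptional component $E$ of a blow-up above $H_\infty$, and you have not shown $m \ge 2$ in that generality. Either you must prove that the transverse multiplicity of $E$ at $o$ is at least $2$ under the lemma's hypotheses, or you must supply an argument for the $m=1$ resonant case that does not rely on a nonexistent step in \cite{Xie23}. The uniqueness argument for ruling out invariant branches tangent to $E$ is also only sketched, but that is a smaller concern.
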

\begin{proof}
     In \cite[Lemma 5.11]{Xie23}, the author proves the this lemma with $X = \P^2$ and $E = H_\infty$. But the approach only requires the data locally around $o$ and so can be adapted to this setting.
     
     Let $L = \C((t))$ with $t$-adic norm. Let $U$ be the affinoid subdomain of $X^{an}_L$, which, in the local affine chart around $o = (0,0)$, is given by $$ U \coloneqq\{(x,y) : |x| \leq |t|, |y| \leq |t|\}.$$ Since $F$ induces a finite morphism on $E$, we have that $F$ induces a well-defined endomorphism $$g|_U : U \to U$$ which fixes the hyperplane $Y = E^{an}_L \cap U$, where $E$ within the affine chart is given by $V(x)$ by a coordinate choice. Now, the rest of the proof follows exactly the same line as in \cite{Xie23}[Lemma 5.11] with $g|_U$, $U$ and $Y$ as above.
\end{proof}

\begin{rmk}
    The key ingredient of the proof of this lemma, as explained in \cite[Proof of Lemma 5.11]{Xie23}, is \cite[Theorem 8.3]{Xie25}, which is the main result of the Appendix A in the paper. In this particular case of Lemma \ref{lem: transvers-non-super-E}, it shows that $g|_U$ is semiconjugated to $g|_Y$. 
    
    Notice that Lemma \ref{lem: transvers-non-super-E} has a similar flavor to \cite[Theorem 2.1]{DFR23}, which shows that there exists a unique smooth analytic curve through $o$ which is transverse to $E$ and invariant under $F$.
\end{rmk}

\section{Proof of Theorem \ref{thm: main-1}} \label{sect: proof-of-main}
In this section, we prove Theorem~\ref{thm: main-1}. 
We begin with a series of lemmas that form the intermediate steps toward the proof. 
Throughout this section, let $F$ be a regular polynomial endomorphism of $\A^2$ of degree $> 1$.
\begin{lem}\label{lem: one-point-or-web}
    Suppose $F: \P^2 \to \P^2$ admits an infinite set of periodic curves $\mathcal{C}$. Then one of the following holds:
    \begin{enumerate}
        \item[(1)] There exists a point $p \in H_\infty$ such that infinitely many curves in $\mathcal{C}$ pass through $p$.
        \item[(2)] There exists an integer $e \in \{1, 2\}$ and an infinite subset $\mathcal{C}' \subseteq \mathcal{C}$ consisting of periodic curves of degree $e$, together with a finite morphism 
        \[
            \sigma : \P^1 \longrightarrow Z,
        \]
        where $Z$ is the Zariski closure of $\mathcal{C}'$ in the space of effective divisors of degree $e$, $\mathcal{M}_e$, such that:
        \begin{itemize}
            \item for a generic $C \in Z$, we have $\deg(F(C)) = \deg(C)$; and 
            \item for any curve $C' \subseteq \P^2$ with $[C'] \in Z$, we have $[F(C')] \in Z$.
        \end{itemize}
    \end{enumerate}
\end{lem}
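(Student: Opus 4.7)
The plan is to assume that case~(1) fails — so for every $p \in H_\infty$, only finitely many curves of $\mathcal{C}$ contain $p$ — and deduce case~(2).

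First I want to extract an infinite subfamily $\mathcal{C}_0 \subseteq \mathcal{C}$ of periodic curves of a single degree $e \in \{1,2\}$. Since $F$ is a regular polynomial endomorphism, $H_\infty$ is totally $F$-invariant, so for any $C \in \mathcal{C}$ periodic of period $n$ the set $C \cap H_\infty$ is a finite $F^n$-invariant subset of preperiodic points of $F|_{H_\infty}:\P^1 \to \P^1$. The projection formula yields $\deg(F^n|_C) = d^n$, so the normalization $\widetilde C$ carries a self-morphism of degree $d^n>1$ and therefore has geometric genus at most $1$; hence large-degree periodic curves must be very singular and rigid. Combined with the failure of case~(1), which forces the intersection divisors $C \cap H_\infty$ to be essentially distinct as $C$ ranges over $\mathcal{C}$, I expect a combinatorial-dimension argument — together with Lemma~\ref{lem: transvers-non-super-E}, which pins down the uniqueness of invariant branches at non-superattracting fixed points on $H_\infty$ — to bound the degrees of curves in $\mathcal{C}$ and to force the common degree $e$ of an infinite subfamily to lie in $\{1,2\}$.

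Next, let $Z$ be an irreducible component of the Zariski closure of $\{[C] : C \in \mathcal{C}_0\}$ in $\mathcal{M}_e$ still containing infinitely many of these divisors, cut down by hyperplane sections in $\mathcal{M}_e$ if necessary so that $\dim Z = 1$. The rationality of $Z$ — i.e.\ the existence of a finite morphism $\sigma:\P^1 \to Z$ — comes from the geometry of one-parameter families of low-degree plane curves: for $e=1$, $\mathcal{M}_1 \cong \P^{2\ast}$ and the one-parameter families of lines without a common point at infinity are essentially pencils through an affine point, already isomorphic to $\P^1$; an analogous analysis inside $\mathcal{M}_2 \cong \P^5$ covers $e=2$. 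The remaining assertions — generic degree stabilization $\deg F(C) = \deg C$ and invariance $[F(C')] \in Z$ for every $[C'] \in Z$ — then follow from a dynamical argument on $Z$: the correspondence $C \mapsto F(C)$ induces a rational map $\mathcal{M}_e \dashrightarrow \bigsqcup_{e'} \mathcal{M}_{e'}$, and because $Z$ contains a Zariski-dense set of $F$-periodic divisors, after replacing $F$ by a suitable iterate this correspondence restricts to a well-defined rational self-map of $Z$, which forces both the degree stabilization and the invariance.

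The main obstacle I foresee is the first step: bounding the degrees of curves in $\mathcal{C}$ and in particular ruling out infinite families of periodic curves of degree $\geq 3$ that do not share a point at infinity. This is where I expect the specific structure of regular polynomial endomorphisms, the genus bound obtained via the projection formula, and Lemma~\ref{lem: transvers-non-super-E} to play an essential role.
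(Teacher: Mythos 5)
Your overall strategy — negate case~(1) and deduce case~(2) — matches the paper, but there is a genuine gap at exactly the place you flag as the ``main obstacle,'' and the tool you reach for (the genus bound) does not close it.

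The projection formula does give $\deg(F^n|_C)=d^n>1$ for a curve of period $n$, and Riemann--Hurwitz then bounds the geometric genus of $C$ by $1$. But curves in $\P^2$ of genus $0$ or $1$ exist in every degree (they are merely forced to be singular), so ``genus $\leq 1$, hence very singular and rigid'' does not yield any bound on $\deg C$, and the subsequent ``combinatorial-dimension argument'' is never specified. The ingredient the paper actually uses — and which is indispensable here — is \cite[Lemma~6.6 and Remark~6.7]{Xie23}: a periodic curve of a regular polynomial endomorphism (other than $H_\infty$) has \emph{at most two branches at infinity}. Once you also know, from Lemma~\ref{lem: transvers-non-super-E}, that curves meeting $H_\infty$ only at non-superattracting periodic points (which is automatic for all but finitely many periodic curves once case~(1) fails, since $F|_{H_\infty}$ has finitely many superattracting cycles and each point of $H_\infty$ carries finitely many periodic curves by assumption) meet $H_\infty$ transversely, you get $\deg C = C\cdot H_\infty = \#(\text{branches at }\infty)\le 2$. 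Without the branch count, the degree bound $e\in\{1,2\}$ has no support.

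Your second step is also underjustified. A one-parameter family of lines in $\mathcal{M}_1\cong\P^{2\ast}$ is simply a curve in $\P^{2\ast}$ and need not be a pencil (e.g.\ the dual of a smooth conic), so ``already isomorphic to $\P^1$'' does not follow, and ``an analogous analysis inside $\mathcal{M}_2\cong\P^5$'' is not given. The paper instead considers the incidence correspondence $\Gamma=\overline{\{(C,x): C\in\mathcal{C}',\,x\in C\cap H_\infty\}}\subseteq\mathcal{M}_e\times H_\infty$. Surjectivity of $\pi_2:\Gamma\to H_\infty$ follows from density of non-superattracting periodic points, and \emph{injectivity} of $\pi_2$ follows from the uniqueness assertion in Lemma~\ref{lem: transvers-non-super-E} (through each such point there is a unique $F$-invariant branch transverse to $H_\infty$). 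Thus $\Gamma\cong\P^1$, and $\sigma=\pi_1\circ\pi_2^{-1}$ gives the parametrization. This simultaneously produces the one-dimensional $Z$, the rationality, and (via $F(Z_{t_0})=Z_{f(t_0)}$ on a dense set of $t_0$, then by Zariski closure) the invariance and degree stabilization. Your closing ``dynamical argument on $Z$'' is in the right spirit, but you need this explicit parametrization by $H_\infty$ to make it work. Also beware: cutting $Z$ down by hyperplane sections of $\mathcal{M}_e$ to force $\dim Z=1$, as you propose, can discard all but finitely many of the periodic divisors, so that step as written does not preserve the hypothesis you need.
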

\begin{proof}
   Suppose that for every point $p \in H_\infty$, only finitely many periodic curves of $F$ pass through $p$. 
    We will show that $(2)$ must then occur. 
    
    By \cite[Lemma~6.6 and Remark~6.7]{Xie23}, every periodic curve of $F$, except the line at infinity $H_\infty = \P^2 \setminus \A^2$, has at most two branches at infinity. 
    Since $F|_{H_\infty}$ has only finitely many superattracting periodic cycles, and each periodic point in $H_\infty$ lies on finitely many periodic curves by assumption, there must exist infinitely many periodic curves 
    \[
        \mathcal{C}' \subseteq \mathcal{C}
    \]
    such that each $C \in \mathcal{C}'$ meets $H_\infty$ only at non-superattracting periodic points of $F|_{H_\infty}$. 
    
    For such curves, Lemma \ref{lem: transvers-non-super-E} implies that each intersection $C \cap H_\infty$ is transverse. 
    Consequently, the intersection multiplicity $C \cdot H_\infty$ equals the number of branches of $C$ at infinity, which is at most two. 
    Hence, $\deg(C) \leq 2$ for infinitely many periodic curves $C \in \mathcal{C}$. 
    Moreover, the same reasoning applies to all curves lying in the periodic cycles of these curves, showing that $\deg(C) = \deg(F(C)) \in \{1,2\}$ for all such $C$.

    We now regard these periodic curves as points of the projective space $\mathcal{M}_e$ of effective divisors of degree $e$ on $\P^2$, where $e \in \{1,2\}$. 
    By enlarging $\mathcal{C}'$ if necessary, we may assume that it is closed under the action of $F$, i.e., it contains all curves in the periodic cycles of its members.

    Consider the incidence variety
    \[
        \Gamma = \overline{\{(C, x) : C \in \mathcal{C}',\, x \in C \cap H_\infty\}} 
        \subseteq \mathcal{M}_e \times H_\infty,
    \]
    and let $\pi_i$ denote the projection onto the $i$-th factor for $i \in \{1,2\}$. 
    Since non-superattracting periodic points are dense in $H_\infty \cong \P^1$, $\pi_2$ is surjective; 
    by Lemma \ref{lem: transvers-non-super-E}, $\pi_2$ is in fact bijective. 
    Also, $\pi_1 : \Gamma \to \mathcal{M}_e$ is a morphism of degree $e$. 
    We may therefore define
    \[
        \sigma := \pi_1 \circ \pi_2^{-1} : \P^1 \cong H_\infty \longrightarrow \mathcal{M}_e,
    \]
    which is a finite morphism with image $Z = \overline{\pi_1(\Gamma)}$. 
    Note that even without enlarging $\mathcal{C}'$, the same $Z$ is obtained after taking closure, since $Z$ is irreducible, parametrized by $\P^1$, and contains all members of $\mathcal{C}'$ and their periodic cycles.

    Finally, let $Z_t$ be the irreducible curve in $\P^2_{\C(t)}$ corresponding to $Z$, where $\C(t)$ is the function field of $H_\infty$. 
    For any $t_0 \in \C$ such that $Z_{t_0} \in \mathcal{C}'$, we have $F(Z_{t_0}) = Z_{f(t_0)}$, where $f = F|_{H_\infty}$. 
    As the set of such $t_0$ (corresponding to non-superattracting periodic points of $f$) is dense in $\P^1$, it follows that
    \[
        F(Z_t) = Z_{f(t)} \quad \text{for all } t.
    \]
    Consequently, for every curve $C'$ with $[C'] \in Z$, we have $[F(C')] \in Z$, completing the proof.

\end{proof}

Now, by Lemma~\ref{lem: one-point-or-web}, one of the following two situations occurs. Either we can construct a one-dimensional family of curves, parametrized by $\P^1$, that contains infinitely many periodic curves and whose generic member has stabilized degree; or there exists a point $p \in H_\infty$ through which infinitely many periodic curves pass. 


 It remains to handle the second case.

 \begin{prop}\label{prop: moduli-DMM-1-D}
     Let $\mathcal{C}$ be an infinite set of periodic curves under $F$ in $\P^2$ of degree $D$ passing through a point $p \in H_\infty$. Then there exists an infinite subset of curves $\mathcal{C}' \subseteq \mathcal{C}$ such that we have a parametrization $\tau : \P^1 \to Z$, where $Z$ is the Zariski closure of $\mathcal{C}'$ in the space of effective divisors of degree $\leq D$ in $\P^2$. Moreover, after replacing $F$ with a suitable iterate, for any curve $C$ such that $[C] \in Z$, we have that $[F(C)] \in Z$ and, in particular, for a generic curve $C'$ such that $[C'] \in Z$, we have $\deg(F(C')) = \deg(C')$.
 \end{prop}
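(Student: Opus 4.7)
The plan is to mimic the incidence-variety argument from Lemma~\ref{lem: one-point-or-web}, but with $H_\infty$ replaced by an exceptional divisor obtained from a suitable blow-up above $p$. I begin by observing that $p$ must be periodic under $f_\infty \coloneqq F|_{H_\infty}$: for each $C \in \mathcal{C}$ the finite set $C \cap H_\infty$ is permuted by $f_\infty^{n_C}$, where $n_C$ is the period of $C$, so $p$ lies in a finite orbit of $f_\infty$. After replacing $F$ by an iterate we may assume $F(p) = p$.

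Next I apply Lemma~\ref{lem: infinite-intersect-with-E} to obtain a blow-up $\pi \colon X \to \P^2$ at $p$ of length at most $D^2+1$ and an irreducible exceptional component $E \subseteq \pi^{-1}(p)$ such that, after passing to an infinite subset $\mathcal{C}' \subseteq \mathcal{C}$, the intersection points $q_C \coloneqq \pi^{\#}C \cap E$ are pairwise distinct as $C$ ranges over $\mathcal{C}'$. A further iterate of $F$ stabilizes the finite set of divisorial valuations appearing in the resolution, so we may assume $F$ lifts to a rational self-map of $X$ inducing a well-defined morphism $f \colon E \to E$ on $E \cong \P^1$. I anticipate the main technical difficulty will be verifying that $f$ is non-constant, hence a finite endomorphism of $\P^1$; this should follow from the fact that $F$ is regular on $\P^2$ and that $E$ meets infinitely many $F$-periodic curves transversely, preventing contraction onto a single point.

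Since each $C \in \mathcal{C}'$ satisfies $F^{n_C}(C) = C$, the strict transform $\pi^{\#}C$ is $F^{n_C}$-invariant on $X$, so $\pi^{\#}C \cap E$ is mapped into itself by $f^{n_C}$. In particular, $q_C$ is $f$-periodic with period dividing $n_C$. Discarding the finitely many $q_C$ whose $f$-orbit meets the finite set of superattracting periodic points of $f$, we may assume every $q_C$ is a non-superattracting periodic point of $f$. Applying Lemma~\ref{lem: transvers-non-super-E} to the iterate $F^{n_C}$ at the non-superattracting fixed point $q_C$ of $f^{n_C}$ shows that the local branch of $\pi^{\#}C$ at $q_C$ is the unique $F^{n_C}$-invariant formal curve at $q_C$ transverse to $E$.

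Finally, consider the incidence variety
\[
\Gamma \coloneqq \overline{\bigl\{([C], q_C) : C \in \mathcal{C}'\bigr\}} \subseteq \mathcal{M}_{\leq D} \times E
\]
and let $\Gamma_0$ be its unique irreducible component dominating $E$. The uniqueness provided by Lemma~\ref{lem: transvers-non-super-E}, combined with Lemma~\ref{lem: finite-blows-up-deter} to upgrade local formal agreement at $q_C$ to global equality of curves of degree $\leq D$, forces the projection $\pi_2 \colon \Gamma_0 \to E$ to be birational. Composing yields a finite surjective morphism
\[
\tau \coloneqq \pi_1 \circ \pi_2^{-1} \colon \P^1 \cong E \longrightarrow Z, \qquad Z \coloneqq \overline{\{[C] : C \in \mathcal{C}'\}}.
\]
Irreducibility of $\P^1$ confines $Z$ to a single component $\mathcal{M}_e$ for some $e \leq D$, so every $[C'] \in Z$ satisfies $\deg C' = e$. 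For $F$-invariance, the identity $\tau(f(q)) = [F(\tau(q))]$ holds on the dense locus of $f$-periodic non-superattracting points $q$, because $F$ sends the unique invariant formal curve at $q$ to the one at $f(q)$; by continuity this identity extends to all of $\P^1$, giving $[F(C)] \in Z$ for every $[C] \in Z$, and in particular $\deg F(C') = e = \deg C'$ for every curve $C'$ with $[C'] \in Z$.
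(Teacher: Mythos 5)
Your proposal captures the broad architecture of the paper's proof (superattracting fixed point at $p$, Lemma~\ref{lem: infinite-intersect-with-E} to find $E$, Lemma~\ref{lem: transvers-non-super-E} for uniqueness, the incidence variety, and a density argument), but it is missing the paper's key technical step: the \emph{iterative} blow-up process. You apply Lemma~\ref{lem: infinite-intersect-with-E} only once. The paper iterates it, continuing to blow up any remaining point of $\pi^{-1}(H_\infty)$ through which infinitely many strict transforms pass, and shows this terminates after at most $D$ rounds. The end result is a model $\pi$ in which, for every $C \in \mathcal{C}'$, \emph{all} intersections of $\pi^{\#}C$ with the components of $\pi^* H_\infty$ lie at non-superattracting periodic points, hence are transverse by Lemma~\ref{lem: transvers-non-super-E}. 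That is what lets the paper compute $\deg C = (\pi^{\#}C,\pi^*H_\infty)$ as a count of transverse intersection points and match it with the count for $F(C)$ (periodic points map bijectively to periodic points under $F|_E$), yielding $\deg(F(C)) = \deg(C)$ for every $C \in \mathcal{C}'$. Without this, you have no control over the degree of $F(C)$; the paper's own Example shows that for a periodic curve, $\deg(F(C))$ need not equal $\deg(C)$ (e.g.\ $V(y) \mapsto V(y^2-x)$ under $(x,y)\mapsto(x^2,y^2-x)$).

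This degree control is not a cosmetic omission: your final "by continuity" step depends on it. On the dense periodic locus you have the local identity $F(C_q) = C_{f(q)}$ of formal germs at $f(q)$, but $[F(\tau(\cdot))]$ is not a priori a morphism into $\mathcal{M}_D$ — the degree of $F(\tau(q))$ could jump. What is a morphism is $F_* \circ \tau : \P^1 \to \mathcal{M}_{(\deg F)\cdot D}$, and $F_*(\tau(q)) = (\deg F|_{C_q})\,[F(C_q)]$; comparing this with $\deg(F)\cdot\tau(f(q))$ requires knowing $\deg F|_{C_q} = \deg F$, i.e.\ $\deg(F(C_q)) = D$, which is exactly the unestablished degree stabilization. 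Your concluding inference "$Z \subseteq \mathcal{M}_e$, hence $\deg(F(C')) = e$" is also incorrect under the paper's conventions: by the Remark after Theorem~\ref{thm: main-1}, $[F(C')] \in Z$ only says that some degree-$e$ divisor $z \in Z$ has the same geometric support as $F(C')$; a non-reduced $z$ can have support of strictly smaller degree, so this does not force $\deg(F(C')) = e$. The paper avoids this trap by establishing degree stabilization on $\mathcal{C}'$ first (via the iterated blow-ups) and only afterwards running the function-field / incidence-variety argument for the invariance $[F(C)] \in Z$; you reverse that order, which is where the gap opens up. As a minor point, you also treat $\pi^{\#}C \cap E$ as a single point $q_C$, whereas the paper correctly allows $D' \le D$ intersection points per curve, giving $\tau$ degree $D'$ rather than $1$.
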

 \begin{proof}
     We first replace $F$ with some iterate and so $p$ is fixed under $F$. Moreover, by Lemma \ref{lem: transvers-non-super-E}, it is a super-attracting fixed point under $F|_{H_\infty}$.
     By Lemma \ref{lem: infinite-intersect-with-E}, there exists a blow up $\pi_0$ of length at most $D^2+1$ at $p$ such that $\mathcal{C}$ intersect a divisor $E_0 \subseteq \pi^{-1}_0(p)$ at infinitely many points. This in particular implies that $ F|_{E_0}$, the induced map of $F$ on $E$ after replacing $F$ with some iterate, is a finite morphism as curves in $\mathcal{C}$ are periodic and so are their intersections with $E_0$. Then there exists an infinite subset $\mathcal{C}' \subseteq \mathcal{C}$ such that for every curve in $\mathcal{C}'$, the intersection $C \cap E_0$ contains some non-superattracting periodic points of $F|_{E_0}$. This is because $F|_{E_0}$ is a finite endomorphism on $E_0$ and thus has only finitely many super-attracting periodic points. 
     
     Now, if there exists another point $p_1 \in \pi^{-1}_{0}(p)\cup H_\infty$ such that infinitely many curves from $\{\pi^\#_0 C : C \in \mathcal{C}'\}$ pass through $p_1$. Then we again use Lemma \ref{lem: infinite-intersect-with-E} to obtain a blow up $\pi_1$ at $p_1$, so that there exists a $E_1 \subseteq \pi_1^{-1}(p_1)$ whose intersection with strict transformations of curves in $\mathcal{C}'$ at infinitely many distinct points. Similarly, shrink $\mathcal{C}'$ and abuse notation to also let $\mathcal{C}'$ denote the infinite subset of $\mathcal{C}'$ only containing those curves with strict transformations intersecting $E_1$ at some non-superattracting periodic points of $F|_{E_1}$, after replacing $F$ with some iterate. 
     
     Now, we repeat this process if we can still find a point in $(\pi_1 \circ \pi_0)^{-1} (H_\infty) $ such that infinitely many strict transformations of curves in $\mathcal{C}'$ pass through it. Then, after the process stop, we obtain a sequence of blow-ups $\pi_0, \cdots, \pi_r$ and exceptional divisors $E_0, \dots, E_r$, such that there exists an infinite set of periodic curves $\mathcal{C}'$ whose strict transformations only intersect $E_i$, $i\in \{1,2 , \dots, r\}$, at non-superattracting periodic points of $F|_{E_i}$, where $r \in \Z^+$. 
     
     This process must terminate after at most $D$ iterations, and $r +1\leq D$, as, denoting $\pi = \pi_r \circ \cdots \circ \pi_0$, $$D \geq \deg(C) \geq (\pi^\# C, E_0 + \cdots + E_r) \geq r+1,
     $$
     for any $C \in \mathcal{C}'$. If the process did not stop, we would obtain $r + 1 = D + 1$, contradicting $\deg(C) \leq D$.

     Hence, after finitely many steps, we obtain a blow-up $\pi$ above $H_\infty$ such that through any point of $\pi^{-1}(H_\infty)$ only finitely many strict transforms $\pi^{\#}C$ (with $C \in \mathcal{C}'$) pass. Excluding from $\mathcal{C}'$ those finitely many curves whose strict transforms meet $E \subseteq \pi^{-1}(H_\infty)$ at some superattracting periodic points, we may assume that for all $C \in \mathcal{C}'$ and all components $E \subseteq \pi^*H_\infty$, the intersection $\pi^{\#}C \cap E$ consists only of non-superattracting periodic points of $F|_E$. We further choose an infinite subset of $\mathcal{C'}$ so that the strict transformation of every curve in it intersects every exceptional divisor (including $H_\infty$) at the same number of points. We abuse notation to still call it $\mathcal{C}'$.

       Now, notice that
     $$ \deg(C) = (\pi^\# C, \pi^*H_\infty),$$
     and by Lemma \ref{lem: transvers-non-super-E} we have every intersection between $\pi^\# C $ and every $E \subseteq \pi^* H_\infty$ is transverse (if non-empty) and so is the intersection between $\pi^\#(F(C))$ and $E$, since the intersection of $\pi^\# C$ and $E$'s are non-superattracting periodic points. Notice that there is a one-to-one correspondence between them as they live in the same periodic cycles under $F|_{E}$. Thus,
\[
(\pi^\# F(C), E) = (\pi^\# C, E),
\]
for every exceptional divisor \( E \subseteq \pi^* H_\infty \).
     Hence, we have 
     \[\deg(F(C)) =  (\pi^\# F(C), \pi^*H_\infty) =  (\pi^\# C, \pi^*H_\infty) = \deg(C).\]

     

    From now on, fix $E \coloneqq E_0$. 
    From now on, fix $E \coloneqq E_0$ and by our construction the strict transformation of every curve in $\mathcal{C}'$ intersects $E$ at $D' \leq D$ points. Define
    \[
        \mathcal{C}_0 = \{F^n(C) : C \in \mathcal{C}',\, n \ge 0\}
    \]
    and let $Z'$ be the closure of $\mathcal{C}_0$ in $\mathcal{M}_D$.

     Consider
    \[
        W = \{(p, C) \in E \times Z' : p \in \pi^{\#}C \cap E\} \subseteq E \times Z'.
    \]
    By Lemma~\ref{lem: transvers-non-super-E}, through each non-superattracting periodic point of $F|_E$ there passes a unique periodic curve (other than $E$ itself) intersecting $E$ transversely.
    Thus, the projection $\tau_1 : W \to E$ is generically one-to-one, hence birational, and therefore an isomorphism since $E \cong \P^1$. Composing with the projection $\tau_2 : W \to Z'$, we obtain
    \[
        \tau = \tau_2 \circ \tau_1^{-1} : E \to Z',
    \]
    a morphism of degree $D'$, giving the desired parametrization.

    We now show that for every $C$ with $[C] \in Z'$, we have $[F(C)] \in Z'$, after replace $F$ with some iterate. View $Z'$ as a curve
    \[
        Z'_t \subseteq \P^2_{\C(E)} \cong \P^2_{\C(t)},
    \]
    where $\C(E) \cong \C(t)$ is the function field of $E \cong \P^1$. The map $F$ induces a morphism
    \[
        \tilde{F} : \P^2_{\C(t)} \to \P^2_{\C(t)},
    \]
    and since $\deg(F(C')) = \deg(C')$ for infinitely many $C' \in Z'$, we have
    \[
        \deg(\tilde{F}(Z'_t)) = \deg(Z'_t).
    \]
    For any $C_1 = Z'_{t_1} \in \mathcal{C}'$, where $t_1$ is a non-superattracting periodic point of $F|_E$, Lemma~\ref{lem: transvers-non-super-E} and the construction of $Z'$ ensures that
    \[
        F(C_1) = Z'_{f(t_1)},
    \]
    where $f = F|_E$. As such points $t_1$ form a Zariski-dense subset of $E \cong \P^1$, we obtain
    \[
        \tilde{F}(Z'_t) = Z'_{f(t)}.
    \]
    Thus, for every $C$ with $[C] \in Z'$, we have $[F(C)] \in Z'$.

    Finally, since $Z \subseteq Z'$ and $Z'$ is irreducible, we have $Z = Z'$. Therefore, for any $C$ with $[C] \in Z$, we have $[F(C)] \in Z$, and for a generic $C$, 
    \[
        \deg(F(C)) = \deg(C).
    \]
 \end{proof}

We now summarize the results obtained for the case where $F$ admits an infinite set of periodic curves.

\begin{prop}\label{prop: 1-D-cases-summary}
    Let $F$ be a regular polynomial endomorphism of degree $d > 1$ admitting an infinite set $\mathcal{C}$ of periodic curves of degree at most $D$. Then there exists a parametrization 
    \[
    \tau : \P^1 \longrightarrow Z,
    \]
    where $Z \subseteq \mathcal{M}_D$ is the Zariski closure, in the space of effective divisors of degree at most $D$, of the effective divisors corresponding to an infinite subset of $\mathcal{C}$. Moreover, after replacing $F$ by a suitable iterate, for any curve $C$ and for a generic curve $C'$ with $[C], [C'] \in Z$, we have
    \[
    [F(C)] \in Z \quad \text{and} \quad \deg(F(C')) = \deg(C').
    \]
\end{prop}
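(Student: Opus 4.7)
The plan is to reduce Proposition~\ref{prop: 1-D-cases-summary} to a combined application of Lemma~\ref{lem: one-point-or-web} and Proposition~\ref{prop: moduli-DMM-1-D}, doing essentially only the bookkeeping needed to line up their hypotheses and conclusions. First I would apply Lemma~\ref{lem: one-point-or-web} to the given infinite set $\mathcal{C}$ of periodic curves of degree at most $D$, which presents a dichotomy.

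In the second alternative of the lemma, we are already handed a parametrization $\sigma : \P^1 \to Z$ where $Z \subseteq \mathcal{M}_e$ for some $e \in \{1,2\}$, together with the two properties that $[F(C')] \in Z$ for every curve $C'$ with $[C'] \in Z$ and $\deg(F(C)) = \deg(C)$ for a generic $C$. Since $e \leq D$, viewing $Z$ inside the space of effective divisors of degree at most $D$ delivers the conclusion verbatim, with no iterate needed.

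In the first alternative there exists $p \in H_\infty$ through which infinitely many curves of $\mathcal{C}$ pass. Because all curves of $\mathcal{C}$ have degree bounded by $D$, the pigeonhole principle extracts an infinite subfamily $\mathcal{C}_0 \subseteq \mathcal{C}$ in which every curve has a common degree $d' \leq D$ and passes through $p$. I would then feed $\mathcal{C}_0$ and $p$ into Proposition~\ref{prop: moduli-DMM-1-D}: this produces, after replacing $F$ with a suitable iterate, a finite morphism $\tau : \P^1 \to Z$ with $Z \subseteq \mathcal{M}_{d'}$ the Zariski closure of an infinite subset of $\mathcal{C}_0$, such that $[F(C)] \in Z$ for every $C$ with $[C] \in Z$ and $\deg(F(C')) = \deg(C')$ for a generic $C'$.

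In either case, regarding $Z$ inside the ambient space of effective divisors of degree at most $D$ yields the required parametrization. There is no substantive obstacle at this stage: all the analytic and geometric content (branches at infinity, transversality via Lemma~\ref{lem: transvers-non-super-E}, the blow-up construction producing $\tau$) has already been absorbed into Lemma~\ref{lem: one-point-or-web} and Proposition~\ref{prop: moduli-DMM-1-D}, so only the degree-pigeonhole step and the compatibility of $\mathcal{M}_{d'} \hookrightarrow \mathcal{M}_{\leq D}$ need to be checked.
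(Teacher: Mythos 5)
Your proposal is correct and follows the paper's own proof essentially verbatim: apply Lemma~\ref{lem: one-point-or-web} to get the dichotomy, note that the second alternative already delivers the conclusion, and dispatch the first alternative via Proposition~\ref{prop: moduli-DMM-1-D}. The pigeonhole step to extract a fixed-degree subfamily before invoking Proposition~\ref{prop: moduli-DMM-1-D} (whose hypothesis assumes a single degree $D$) is a small but welcome piece of care that the paper leaves implicit.
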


\begin{proof}
    By Lemma~\ref{lem: one-point-or-web}, either there exists an infinite subset of $\mathcal{C}$ consisting of curves all passing through a single point $p \in H_\infty$, or the statement already holds for an infinite subset of $\mathcal{C}$, in which case we are done. 
    
    
    The first case is covered by Proposition~\ref{prop: moduli-DMM-1-D}.
\end{proof}

We are now ready to prove Theorem~\ref{thm: main-1}.

\begin{proof}
    Without loss of generality, we may assume that $Z$ is irreducible. We first show that for any curve $C$ with $[C] \in Z$, one has $[F(C)] \in Z$. 
    
    The pushforward map 
    \[
    F_* : \mathcal{M}_d \longrightarrow \mathcal{M}_{\deg(F)d}
    \]
    is well defined. Define 
    \[
    \psi : \mathcal{M}_d \longrightarrow \mathcal{M}_{\deg(F)d}, \quad \psi(A) = \deg(F)\,A,
    \]
    which is an isomorphism onto its image. Let $\mathcal{C}$ denote the Zariski dense set of periodic curves of $F$ whose corresponding effective divisors lie in $Z$. By Proposition~\ref{prop: 1-D-cases-summary}, after replacing $F$ by an iterate, there exists an infinite subset $\mathcal{C}' \subseteq \mathcal{C}$ whose Zariski closure $Z'$ satisfies $[F(C')] \in Z'$ for all $C' \in \mathcal{C}'$. 
    
    If $\mathcal{C}'$ is not Zariski dense in $Z$, then $\mathcal{C} \setminus \mathcal{C}'$ must still be infinite. We may then apply Proposition~\ref{prop: 1-D-cases-summary} again to $\mathcal{C} \setminus \mathcal{C}'$ and enlarge $\mathcal{C}'$ and $Z'$, where $Z'$ becomes a union of projective lines contained in the Zariski closure of $\mathcal{C}'$. By Zorn’s Lemma, there exists a maximal subset $\mathcal{C}'' \subseteq \mathcal{C}$ whose Zariski closure satisfies this invariance property. Our argument implies that $|\mathcal{C} \setminus \mathcal{C}''| < \infty$, since otherwise $\mathcal{C}''$ could be enlarged, contradicting maximality. We rename $\mathcal{C}''$ as $\mathcal{C}'$, noting that it remains Zariski dense in $Z$.
    
    Define 
    \[
    G \coloneqq \psi^{-1} \circ F_*.
    \]
    This map is well defined on the set of effective divisors corresponding to $\mathcal{C}'$, a Zariski dense subset of $Z$, and its image also lies in $Z$. Since $\psi(\mathcal{M}_d)$ is closed in $\mathcal{M}_{\deg(F)d}$ and contains $\deg(F)\mathcal{C}'$, the composition $\psi^{-1} \circ F_*$ extends to an endomorphism on a closed subset of $\mathcal{M}_d$ containing $\mathcal{C}'$. Hence $G$ extends to an endomorphism of $Z$, and we have $G(Z) = Z$. Consequently, for any curve $C$ with $[C] \in Z$, one has $[F(C)] \in Z$.
    
    Let $\tilde{Z}$ denote the curve in $\P^2_{\C(Z)}$ whose fibers over points of $Z$ correspond to the curves in $Z$, where $\C(Z)$ is the function field of $Z$. Let $\tilde{F} : \P^2_{\C(Z)} \to \P^2_{\C(Z)}$ be the induced map of $F$. The discussion above shows that $\tilde{F}(\tilde{Z})$ is a curve in $\P^2_{\C(Z)}$ containing a Zariski dense set of fibers corresponding to curves in $\mathcal{C}'$. Hence
    \[
    \deg(\tilde{F}(\tilde{Z})) = \deg(\tilde{Z}).
    \]
    Therefore, for a generic curve $C$ with $[C] \in Z$, we have
    \[
    \deg(F(C)) = \deg(C),
    \]
    completing the proof.
\end{proof}

In fact, a more general version of Theorem \ref{thm: main-1} is predicted by the Dynamical Manin--Mumford Conjecture:

\begin{thm}[Theorem \ref{thm: DMM-implies-general-main}]
    Let $F$ be a surjective endomorphism of $\P^K$ of degree $>1$. Let 
    \[
    Z \subseteq Ch_{D,d}(\P^K)
    \]
    be a subvariety of the Chow variety parameterizing algebraic cycles of $\P^K$ of dimension $D \leq K-1$ and degree $d$. Suppose $Z$ contains a Zariski dense set of algebraic cycles corresponding to irreducible periodic subvarieties of $\P^K$ under $F$ of degree $d$. 
    
    Assuming $(F, Z)$ is of general type and Conjecture \ref{conj: DMM}, then for every subvariety $C \subseteq \P^K$ and a generic subvariety $C' \subseteq \P^K$ with $[C], [C'] \in Z$, we have
    \[
    [F(C)] \in Z \quad \text{and} \quad \deg(F(C')) = \deg(C'),
    \]
    after possibly replacing $F$ by some iterate.
\end{thm}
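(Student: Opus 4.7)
The plan is to reduce to an application of Conjecture~\ref{conj: DMM} on a subvariety of the product $(\P^K)^N$ built from the universal family over $Z$, then use the general-type hypothesis to discard the ``special'' alternative, and finally unfold $F^{\times N}$-invariance into the two claimed conclusions.

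Let $\mathcal{X}\subseteq Z\times\P^K$ be the universal cycle whose fiber over $[V]\in Z$ is $V$. For each of the Zariski-densely many $[V]\in Z$ with $V$ irreducible, periodic, of degree $d$, some iterate $F^k$ restricts to a polarized endomorphism of $V$ (with polarization $\mathcal O_{\P^K}(1)|_V$), so Fakhruddin's theorem yields a Zariski-dense set of $F$-periodic points in $V$. Choose $N$ large enough that $N$ generic points on a general irreducible $D$-dimensional degree-$d$ subvariety of $\P^K$ uniquely determine it, and let $W_N\subseteq(\P^K)^N$ be the Zariski closure of the image of the $N$-fold fiber power $\mathcal{X}^N$ under the projection to $(\P^K)^N$. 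Then $W_N$ contains a Zariski-dense set of $F^{\times N}$-periodic points.

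Apply Conjecture~\ref{conj: DMM} to the polarized endomorphism $F^{\times N}$ on $(\P^K)^N$ and to $W_N$. If $W_N$ is $F^{\times N}$-preperiodic, then the Zariski-density of genuine \emph{periodic} points upgrades this to $F^{\times N}$-periodicity, and after replacing $F$ by an iterate we arrange $F^{\times N}(W_N)=W_N$. If instead the special alternative occurs---$W_N\subseteq Y$ with $Y$ invariant under $F^{\times Nn}$ and some polarized $\psi$ commuting with $F^{\times Nn}$ on $Y$, with $W_N$ preperiodic under $\psi$---we must rule it out, and this is the principal obstacle. The idea is to restrict to the generic fiber of $W_N\to Z$ and project $\psi$ factor by factor, producing polarized endomorphisms of $\P^K_{\overline{\C(Z)}}$ that commute with iterates of $F$ on a subvariety containing the generic fiber $\mathbf X$ of $\mathcal X$; the general-type assumption on $(F,Z)$ then forces each such component to be an iterate of $F$, so $\psi$ itself is an iterate of $F^{\times N}$ and the special case collapses to the preperiodic one.

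With $F^{\times N}(W_N)=W_N$ established, fix a generic irreducible $V\in Z$ of degree $d$ and generic $x_1,\dots,x_N\in V$; then $(F(x_1),\dots,F(x_N))\in W_N$ produces $[V']\in Z$ whose support contains every $F(x_i)$. Since the $F(x_i)$ form a generic $N$-tuple on the irreducible $D$-dimensional $F(V)$ and $N$ was chosen to be determining, $F(V)$ is a component of $\mathrm{supp}(V')$; combined with $\deg V'=d$ this forces $V'=F(V)$, giving $\deg F(V)=d$ and $[F(V)]\in Z$ for generic $V$. The extension to every $C$ with $[C]\in Z$ follows by continuity: approximate $[C]$ by generic $[V_n]\in Z$, pass to the limit using closedness of $Z$ in $Ch_{D,d}(\P^K)$, and conclude that $F(C)$ is the support of some element of $Z$.
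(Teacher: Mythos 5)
Your proposal follows the same skeleton as the paper's proof: form an $N$-fold product $W_N$ of the universal cycle over $Z$ inside $(\P^K)^N$, note it carries a Zariski-dense set of $F^{\times N}$-preperiodic points, apply Conjecture~\ref{conj: DMM}, and invoke the general-type hypothesis to force the auxiliary commuting map to be an iterate of $F^{\times N}$, so that $W_N$ is $F^{\times N}$-periodic (after replacing $F$ by an iterate). The divergence is the final unfolding step. You choose $N$ so that $N$ generic points on a degree-$d$ $D$-fold pin it down, argue pointwise that $(F(x_1),\dots,F(x_N))\in W_N$ recovers $F(V)$ for generic $V$, and then pass from generic $V$ to every $C$ ``by continuity.'' The paper instead takes $N=\dim Z+1$ and runs a dimension-decreasing induction: choose $p\in F(C)$ through which only a strictly smaller-dimensional subfamily $Z_1\subseteq Z$ passes, factor out the first coordinate, and descend. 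That induction gives $[F(C)]\in Z$ for \emph{every} $C$ at once, with no limiting argument, and avoids ever needing $V'$ to be generic.

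Two steps in your route need real work and are currently gaps. First, the inference ``$F(V)$ is a component of $\mathrm{supp}(V')$ and $\deg V'=d$, hence $V'=F(V)$'' does not follow as stated: you do not yet know $\deg F(V)=d$ (it could a priori be smaller, leaving room for extra components or multiplicities in $V'$). To close this you would need to show $(F(x_1),\dots,F(x_N))$ is a \emph{generic} point of $W_N$ — which requires first establishing surjectivity of $F^{\times N}|_{W_N}$ and that a generic point of $V^N$ for generic $V\in Z$ is generic in $W_N$ — so that $V'$ is a generic, hence irreducible, element of $Z$. Second, the continuity extension to arbitrary $C$ is not a one-liner: the natural continuous map on Chow varieties is $F_*\colon Ch_{D,d}\to Ch_{D,\delta^D d}$ (with $\delta=\deg F$), and you must track the relation $F_*[V_n]=\deg(F|_{V_n})\cdot z_n$ with $z_n\in Z$, verify $\deg(F|_{V_n})=\delta^D$ from the generic case, and pass this identity to the limit before reading off supports. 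This can be made to work, but it is substantially more than ``pass to the limit using closedness of $Z$''; the paper's induction sidesteps the issue entirely. A smaller point: when projecting $\psi$ factor-by-factor to use the general-type hypothesis you, like the paper, tacitly rely on the fact that a polarizable endomorphism of $(\P^K)^N$ becomes split after passing to an iterate; this should be stated.

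Finally, your preliminary step of upgrading $F^{\times N}$-preperiodicity of $W_N$ to periodicity is asserted but not argued; the paper's version proceeds by pigeonholing the periodic $C^N$'s among the finitely many eventual images $(F^{\times N})^{m+j}(W_N)$, taking closures, and comparing dimensions. That argument should be reproduced rather than gestured at.
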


\begin{proof}
    Without loss of generality, we may assume $Z$ is irreducible, since the argument can be applied to each irreducible component separately. Set $N \coloneqq \dim(Z) + 1$. Inspired by the fiber products constructions used in \cite{DM24}, we consider the subvariety
    \[
    Y \coloneqq \overline{\bigcup_{z \in Z} \{(x_1, \dots, x_N) : x_i \in C \subseteq \P^K,\, [C] = z,\, i = 1, \dots, N\}} \subseteq (\P^K)^N.
    \]
    Since $Z$ contains a Zariski dense set of periodic subvarieties under $F$, the set of preperiodic points of $F^{\times N}$ in $Y$ is Zariski dense. By Conjecture \ref{conj: DMM}, there exists a subvariety $W$ invariant under $(F^{\times N})^l$ for some $l > 0$, containing $Y$ and a polarizable 
endomorphism $G$ on $W$ such that 
\[
G \circ (F^{\times N})^l = (F^{\times N})^l \circ G
\quad \text{on } W.
\]
 Moreover, $Y$ is preperiodic under $G$.  

Since $Y$ contains every $C^N$ whose class $[C]$ lies in $Z$, and since $G$ becomes a split 
morphism after replacing it by a suitable iterate, we have 
\[
G_i \circ F^l = F^l \circ G_i
\quad \text{on } W_i,
\]
for each $i \in \{1, 2, \dots, N\}$, where $W_i$ denotes the projection of $W$ onto the $i$-th 
factor of $(\P^M)^N$, and $W_i$ contains $C$ for every $[C] \in Z$.  

By our assumption that $(F, Z)$ is of general type, it follows that each $G_i$ is an iterate of $F$, 
and therefore $G$ itself is an iterate of $F^{\times N}$. Hence, $Y$ is preperiodic under 
$F^{\times N}$.

    Then, there exist $m,n \in \Z^+$ such that
    \[
    (F^{\times N})^{m}(Y) \subseteq (F^{\times N})^{n}(Y).
    \]
    Assuming $n > m$, it follows that 
    \[
    (F^{\times N})^{m + (n-m)k}(Y) \subseteq (F^{\times N})^m(Y)
    \quad \text{for all } k \in \Z^+.
    \]
    Let $Z_0 \subseteq Z$ denote the set of cycles corresponding to the Zariski dense set of periodic subvarieties in $Z$. Then there exists some $j \in \{1, \dots, n-m\}$ such that
    \[
    \bigcup_{z \in Z'_0} \{(x_1, \dots, x_N) \in C^N : [C] = z\}
    \subseteq (F^{\times N})^{m+j}(Y),
    \]
    where $Z'_0 \subseteq Z_0$ is a Zariski dense subset of $Z$. Note that
    \[
    \overline{\bigcup_{z \in Z'_0} \{(x_1, \dots, x_N) \in C^N : [C] = z\}} = Y,
    \]
    hence
    \[
    Y = (F^{\times N})^{m+j}(Y),
    \]
    so $Y$ is periodic under $F^{\times N}$.

    Replacing $F$ by $F^{m+j}$, we have
    \[
    Y = \bigcup_{z \in Z} \{(x_1, \dots, x_N) \in C^N : [C] = z\}.
    \]
    Indeed, this follows because the union above equals
    \[
    \pi(\mathcal{Z}),
    \]
    where $\mathcal{Z}$ is the closed subvariety
    \[
    \mathcal{Z} \coloneqq \{(z, x_1, \dots, x_N) : z \in Z,\ (x_1, \dots, x_N) \in C^N,\ [C] = z\} \subseteq Z \times (\P^K)^N,
    \]
    and $\pi : Z \times (\P^K)^N \to (\P^K)^N$ is the natural projection, which is a closed map.

    Now let $C \subseteq \P^K$ be any subvariety with $[C] \in Z$. We claim that
    \[
    (F(C))^N \subseteq \bigcup_{z \in Z} \{(x_1, \dots, x_N) \in C'^N : [C'] = z\}
    \]
    implies $[F(C)] \in Z$.

    We prove this claim by induction on $N \ge 1$.  
    For the base case $N=1$, we have $\dim(Z)=0$, and $Y$ is a finite union of subvarieties of $\P^K$ parameterized by $Z$. In this case, $F(C) \subseteq Y$ clearly implies $[F(C)] \in Z$.

    Assume the statement holds for $N \le M$ for some $M \ge 1$, and consider $N = M+1$. There exists a point $p \in F(C)$ such that the subvariety
    \[
    Z_1 \coloneqq \{ z \in Z : p \in \mathrm{Supp}(z)\}
    \]
    has $\dim(Z_1) < N-1$. Otherwise, if every $C'$ with $[C'] \in Z$ contains $F(C)$, then $\dim(Z) = 0$, contradicting our assumption that $Z$ parametrizes a Zariski dense family of irreducible subvarieties.

    Hence,
    \[
    \{p\} \times (F(C))^{N-1} \subseteq \bigcup_{z \in Z_1} \{(p, x_2, \dots, x_N) : (x_2, \dots, x_N) \in C'^{N-1}, [C'] = z\},
    \]
    which implies
    \[
    (F(C))^{N-1} \subseteq \bigcup_{z \in Z_1} \{(x_2, \dots, x_N) : (x_2, \dots, x_N) \in C'^{N-1}, [C'] = z\}.
    \]
    By the induction hypothesis, we conclude that $[F(C)] \in Z$, proving the claim.

    Since $(F(C))^N \subseteq Y$, the claim yields $[F(C)] \in Z$. Let $\C(Z)$ be the function field of $Z$, and let
    \[
    \tilde{Z} \subseteq \P^K_{\overline{\C(Z)}}
    \]
    denote the subvariety corresponding to the generic fiber, so that for any $z \in Z$, we have $[\tilde{Z}_z] = z$. Denote by
    \[
    \tilde{F} : \P^K_{\overline{\C(Z)}} \to \P^K_{\overline{\C(Z)}}
    \]
    the map induced by $F$ under base change. Then
    \[
    [\tilde{F}(\tilde{Z}_z)] = [F(\tilde{Z}_z)] \in Z
    \quad \text{for all } z \in Z.
    \]
    In particular, this implies $\deg(\tilde{F}(\tilde{Z})) = \deg(\tilde{Z})$, and hence for a generic $C' \subseteq \P^K$ with $[C'] \in Z$, we have
    \[
    \deg(F(C')) = \deg(C').
    \]
\end{proof}

 \section{Implication Towards the Conjectures \ref{conj: DM24-conj-1,1}}\label{sect: impl-RDMM}

 In this section, we explore how our main result relates to the conjectures proposed in \cite{DM24}. 
When we restrict to the case where the family 
\[
\Phi : S \times \P^2 \longrightarrow S \times \P^2
\]
is constant with respect to the parameter space $S$, we show that our main theorem implies a weaker form of Conjecture~\ref{conj: DM24-conj-1,1}, stated below as Corollary~\ref{cor: conj-1.1-DM}.

For the convenience of the reader, we restate Corollary~\ref{cor: conj-1.1-DM} here.
 
 \begin{cor}[Corollary \ref{cor: conj-1.1-DM}]
      Let $S$ be a smooth and irreducible quasi-projective variety defined over $\C$. Let $\Phi : S \times \P^2 \to S \times \P^2 $ be a constant family of endomorphisms such that for every point $(s, p) \in S \times \P^2$
     $$ \Phi(s,p) = (s, F(p))$$
     for a regular polynomial endomorphism $F$ of degree $>1$ does not depend on $s$. Let $\mathcal{X} \subseteq S \times \P^2$ be an irreducible hypersurface that is flat over $S$ and projects dominantly to $S$. Suppose there exists a Zariski dense set of $s \in S$ such that $\mathcal{X}_s$, the fiber of $\mathcal{X}$ at $s$, is a periodic curve under $F$. Then for all $N \in \Z^+$, we have 
     $$ \hat{T}_{\Phi^{\times N}}^{\wedge r_{\Phi^{\times N}, \mathcal{X}^N}} \wedge [\mathcal{X}^N] \neq 0,$$
     and $r_{\Phi^{\times N}, \mathcal{X}^N} \leq \min\{ 2N , N + \dim S\}$.
 \end{cor}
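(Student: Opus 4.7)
The plan is to reduce Corollary~\ref{cor: conj-1.1-DM} to Theorem~\ref{thm: main-1} by constructing an explicit $\Phi^{\times N}$-special subvariety containing $\mathcal{X}^N$ and then verifying Green-current non-vanishing using the constant family structure. Since $\mathcal{X}$ is flat over $S$, the fibers $\mathcal{X}_s$ have a constant degree $d$, giving a modular morphism $\sigma : S \to \mathcal{M}_d$, $s \mapsto [\mathcal{X}_s]$. Setting $Z := \overline{\sigma(S)} \subseteq \mathcal{M}_d$, the hypothesis ensures $Z$ contains a Zariski dense set of periodic curves under $F$, so Theorem~\ref{thm: main-1} applies: after replacing $F$ by a suitable iterate we have $[F(C)] \in Z$ for every $C$ with $[C] \in Z$, and $F$ induces a self-map $G : Z \to Z$.

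Next I construct the special containing variety. Let $\mathcal{V}_Z \subseteq Z \times \P^2$ be the universal family of curves and $\mathcal{V}_Z^N := \mathcal{V}_Z \times_Z \cdots \times_Z \mathcal{V}_Z$ its $N$-fold fiber power over $Z$; set
\[
Y^N \;:=\; \bigcup_{[C] \in Z} C^N \;\subseteq\; (\P^2)^N,
\]
the image of $\mathcal{V}_Z^N$ under projection to $(\P^2)^N$. Invariance under $G$ yields $F^{\times N}(Y^N) \subseteq Y^N$, and from $\dim Z = \dim \sigma(S) \le \dim S$ we obtain $\dim Y^N \le \dim Z + N \le \min\{2N,\, N + \dim S\}$. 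Then $\mathcal{Y}^N := S \times Y^N$ is $\Phi^{\times N}$-invariant, projects onto $S$, and contains $\mathcal{X}^N$. To verify $\Phi^{\times N}$-speciality I would take $\Psi$ to be a cyclic coordinate permutation of the $N$ copies of $\P^2$: it is polarizable for the symmetric polarization $\sum_j \pi_j^*\mathcal{O}(1)$, commutes with $\Phi^{\times N}$, preserves $Y^N$, and fixes $\mathcal{X}^N$ setwise (the fiber product is symmetric), so $\mathcal{X}^N$ is $\Psi$-periodic. This yields $r_{\Phi^{\times N}, \mathcal{X}^N} \le \dim Y^N \le \min\{2N,\, N + \dim S\}$.

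For the non-vanishing, the constant family structure gives $\hat{T}_{\Phi^{\times N}} = \sum_{j=1}^N \pi_j^* \hat{T}_F$ with $\pi_j = p \circ \pi_{S,j}$ for $p : S \times \P^2 \to \P^2$ and $\pi_{S,j} : S \times (\P^2)^N \to S \times \P^2$ the $j$-th relative projection. Expanding,
\[
\hat{T}_{\Phi^{\times N}}^{\wedge r} \wedge [\mathcal{X}^N] \;=\; \sum_{\substack{i_1 + \cdots + i_N = r \\ 0 \le i_j \le 2}} \binom{r}{i_1, \ldots, i_N} \bigwedge_{j=1}^N \pi_{S,j}^*\bigl(\hat{T}_\Phi^{\wedge i_j} \wedge [\mathcal{X}]\bigr),
\]
a sum of positive closed currents, so it suffices to exhibit one non-vanishing term. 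Each factor $\hat{T}_\Phi^{\wedge i_j} \wedge [\mathcal{X}]$ on $S \times \P^2$ is non-zero: for $i_j \in \{0,1\}$ its restriction to a periodic fiber $\{s_0\} \times \P^2$ is $[\mathcal{X}_{s_0}]$ or the equilibrium measure of $F|_{\mathcal{X}_{s_0}}$, and for $i_j = 2$ it equals $p^*\mu_F \wedge [\mathcal{X}]$, which has positive total mass $\deg(p|_\mathcal{X})$ whenever $\dim S \ge 1$ makes $p|_\mathcal{X}$ dominant. Choosing $r - N$ indices with $i_j = 2$ in the regime $N < r \le N + \dim S$ and $i_j \in \{0,1\}$ otherwise, iterated integration over $S$ converts the fiber-product current into a product of positive measures and functions on $S$, yielding the desired non-vanishing. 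The main obstacle will be making the wedge of mixed-bidegree positive closed currents rigorous, since in the regime $r > N$ the slice-wise argument alone is insufficient and one must combine horizontal and vertical contributions; writing out the requisite slicing theory over the positive-dimensional base $S$ is where most of the technical work lies.
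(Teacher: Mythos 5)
Your reduction to Theorem~\ref{thm: main-1} and your construction of $Y := \overline{\bigcup_{[C]\in Z} C^N} \subseteq (\P^2)^N$ (and $\mathcal{Y} := S\times Y$) coincide with the paper's proof, including the dimension bound $\dim Y \le \min\{2N, N+\dim S\}$. The two points where you diverge contain problems.

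First, your certification of $\Phi^{\times N}$-speciality by taking $\mathbf{\Psi}$ to be the cyclic coordinate permutation does not work: a permutation has topological degree $1$, and for the symmetric polarization $L = \sum_j \pi_j^*\mathcal{O}(1)$ you have $\Psi^*L \cong L^{\otimes 1}$, not $L^{\otimes q}$ with $q>1$. This is not a polarized endomorphism in the sense required by the definition (and by Conjecture~\ref{conj: DMM}). The fix is available and implicit in the paper: take $\mathbf{Z}=Y$ and $\mathbf{\Psi} = (F^{\times N})^n|_Y$ for the iterate $n$ such that $F^{\times N}$ fixes $Y$. Then $\mathbf{\Psi}$ is polarizable (restriction of a polarized endomorphism of $(\P^2)^N$ to an invariant subvariety), trivially commutes with $\mathbf{\Phi}^n$, and $Y$ is $\mathbf{\Psi}$-fixed, so $S\times Y$ is $\Phi^{\times N}$-special.

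Second, and more seriously, your non-vanishing argument via the multinomial expansion of $\hat{T}_{\Phi^{\times N}}^{\wedge r} = \bigl(\sum_j \pi_{S,j}^*\hat{T}_\Phi\bigr)^{\wedge r}$ wedged against $[\mathcal{X}^N]$ is not complete, and the place where it is incomplete is precisely where the real content lies. The individual terms you isolate are wedge products of positive closed currents of mixed bidegree, including wedges of pulled-back equilibrium measures with divisorial integration currents on a positive-dimensional base; the fact that each tensor-factor $\hat{T}_\Phi^{\wedge i_j}\wedge[\mathcal{X}]$ is nonzero on $S\times\P^2$ does not imply that the combined wedge over the $N$ slots (all coupled through the shared $S$-direction in the fiber product) is nonzero. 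You acknowledge needing slicing theory over $S$, but no route is given, and the computation of total mass of a term like $p^*\mu_F\wedge[\mathcal{X}]$ does not resolve the support-intersection issues. The paper avoids all of this with one short move you are missing: push forward by the projection $\tau_N : S\times(\P^2)^N\to(\P^2)^N$ and use the projection formula, $\hat{T}_{\Phi^{\times N}}^{\wedge r}\wedge[\mathcal{X}^N] = \tau_N^*(\hat{T}_{F^{\times N}}^{\wedge r})\wedge[\mathcal{X}^N]$, so that $(\tau_N)_*$ of this current is $\hat{T}_{F^{\times N}}^{\wedge r}\wedge(\tau_N)_*[\mathcal{X}^N] \ge \hat{T}_{F^{\times N}}^{\wedge r}\wedge[Y]$. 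This moves the entire non-vanishing question to $(\P^2)^N$, where $\hat{T}_{F^{\times N}}$ has continuous potentials and $Y$ is $F^{\times N}$-invariant: one then writes $T = (\sum\pi_i^*\omega)^{\wedge r}$ for $\omega$ the Fubini--Study form, observes $\int T\wedge[Y]>0$, uses $F^{\times N}$-invariance of $Y$ to conclude $\int d^{-rn}(F^{\times N})^{n*}T\wedge[Y] = \int T\wedge[Y]$ for all $n$, and applies local uniform convergence of potentials to pass to the limit. That argument is short, unconditional, and sidesteps the slicing difficulties entirely. You should adopt it rather than pursue the multinomial route.
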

 \begin{proof}

    There exists an irreducible subvariety 
    \[
    Z \subseteq \mathcal{M}_d,
    \]
    where $\mathcal{M}_d$ denotes the space of effective divisors of degree at most $d > 0$ in $\P^2$, such that the induced map by the family $\mathcal{X}$
    \[
    \sigma : S \to Z
    \]
    is dominant. 
    When no confusion arises, we will also regard $\mathcal{M}_d$ as the space of (possibly reducible) curves in $\P^2$, and similarly for $Z \subseteq \mathcal{M}_d$. 
    By assumption, there exists a Zariski dense set of $z \in Z$ such that $z$ is a periodic curve under $F$. 
    Then Theorem~\ref{thm: main-1} implies that, after replacing $F$ by some iterate, $F(z) \in Z$ for all $z \in Z$. 
    
    Consider the subvariety
    \[
    Y \coloneqq \overline{\{(p_1, \dots, p_N) \in (\P^2)^N : p_1, \dots, p_N \in \mathcal{X}_s \text{ for some } s \in S\}},
    \]
    which is the image of the projection of the fiber product $\mathcal{X}^N$ onto $(\P^2)^N$. 
    Then
    \[
    \dim(Y) = r \leq \min\{2N,\, \dim(\mathcal{X}^N)\} \leq \min\{2N,\, N + \dim S\}.
    \]
    We claim that $Y$ is invariant under $F^{\times N}$. 
    Indeed, for any $(p_1, \dots, p_N) \in (\mathcal{X}_s)^N \subseteq (\P^2)^N$, we have
    \[
    (F(p_1), \dots, F(p_N)) \in F(\mathcal{X}_s)^N = F(\sigma(s))^N = z^N
    \]
    for some $z \in Z$. 
    Since $\sigma : S \to Z$ is dominant and $Y$ is Zariski closed, we have $z^N \subseteq Y$, hence $F^{\times N}(Y) \subseteq Y$.

    Applying the projection formula for 
    \[
    \tau_N : S \times (\P^2)^N \to (\P^2)^N,
    \]
    we obtain
    \[
    \hat{T}_{\Phi^{\times N}}^{\wedge r} \wedge [\mathcal{X}^N]
    = \tau_N^*(\hat{T}_{F^{\times N}}^{\wedge r}) \wedge [\mathcal{X}^N]
    = \hat{T}_{F^{\times N}}^{\wedge r} \wedge (\tau_N)_*[\mathcal{X}^N]
    \geq \hat{T}_{F^{\times N}}^{\wedge r} \wedge [Y].
    \]
    
    Let $T = \left(\sum_{i=1}^N \pi_i^*\omega\right)^{\wedge r}$ be an $r$-current on $(\P^2)^N$, where $\pi_i$ denotes projection onto the $i$-th factor and $\omega$ is the Fubini–Study form on $\P^2$. 
    Then
    \[
    \int_{(\P^2)^N} T \wedge [Y] > 0.
    \]
    Let $d = \deg(F^{\times N})$. 
    Observe that
    \begin{align*}
        \int_{(\P^2)^N} d^{-rn} (F^{\times N})^{n*}T \wedge [Y] 
        &= \int_{(\P^2)^N} d^{-rn} T \wedge (F^{\times N})^n_*[Y] \\
        &= \int_{(\P^2)^N} d^{-rn} T \wedge d^{rn}[Y] \\
        &= \int_{(\P^2)^N} T \wedge [Y].
    \end{align*}
    Moreover, by the local uniform convergence of the potentials of $(F^{\times N})^{n*}T$ to those of $\hat{T}_{F^{\times N}}^{\wedge r}$ (see \cite[Chapter~III, Corollary~3.6]{Dem}), we have
    \[
    d^{-rn}(F^{\times N})^{n*}T \wedge [Y] \longrightarrow \hat{T}_{F^{\times N}}^{\wedge r} \wedge [Y]
    \quad \text{as } n \to \infty.
    \]
    Hence,
    \[
    \int_{(\P^2)^N} \hat{T}_{F^{\times N}}^{\wedge r} \wedge [Y] > 0,
    \]
    and therefore,
    \[
    \hat{T}_{F^{\times N}}^{\wedge r} \wedge [Y] > 0.
    \]

    Finally, by definition, we have $r_{\Phi^{\times N}, \mathcal{X}^N} \leq r$ since $S \times Y$ is $\Phi^{\times N}$-special. 
    Consequently,
    \[
    \hat{T}_{\Phi^{\times N}}^{\wedge r_{\Phi^{\times N}, \mathcal{X}^N}} \wedge [\mathcal{X}^N] \neq 0.
    \]

 \end{proof}

 \begin{rmk}\label{rmk: special-case-DMM}

     The proof above in fact shows that we have established a version of the Dynamical Manin–Mumford Conjecture for $F^{\times N}$ and $Y \subseteq (\P^2)^N$. 
However, our result holds under the stronger assumption that $Y$ arises from a family of curves containing a Zariski dense set of \emph{periodic curves}, rather than merely requiring that $Y$ contains a Zariski dense set of \emph{preperiodic points}.
 \end{rmk}
 \begin{cor}
    Under the same assumptions of Corollary \ref{cor: conj-1.1-DM}, for any positive integer $N > 0$, we have $\mathcal{X}^N $ has codimension $\leq \dim S$ in a $\Phi^{\times N}$-special subvariety in $S \times (\P^2)^N$.
 \end{cor}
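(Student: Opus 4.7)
The strategy is to reuse the subvariety $Y \subseteq (\P^2)^N$ that was built in the proof of Corollary \ref{cor: conj-1.1-DM} as the closure of $\bigcup_{s \in S} (\mathcal{X}_s)^N$, and to take $\mathcal{Y} := S \times Y$ as the candidate $\Phi^{\times N}$-special subvariety containing $\mathcal{X}^N$. That earlier proof already showed, after replacing $F$ by a suitable iterate, that $F^{\times N}(Y) \subseteq Y$, and equality of dimensions together with surjectivity of $F^{\times N}$ upgrades this to $F^{\times N}(Y) = Y$. Since the family $\Phi^{\times N}$ is constant with fiber $F^{\times N}$, the generic fiber of $\mathcal{Y}$ over $S$ is $Y$, and it is invariant under the induced map; taking $\mathbf{Z} = Y$ and $\mathbf{\Psi}$ an appropriate iterate of $F^{\times N}$ as the commuting endomorphism immediately exhibits $\mathcal{Y}$ as $\Phi^{\times N}$-special in the sense of \cite{DM24}.

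The remaining step is a straightforward dimension count. Because $\mathcal{X}$ is an irreducible hypersurface in $S \times \P^2$ that is flat over $S$ and projects dominantly, every fiber $\mathcal{X}_s$ is a curve in $\P^2$, so the iterated fiber product satisfies $\dim \mathcal{X}^N = \dim S + N$. On the other hand, $\dim \mathcal{Y} = \dim S + \dim Y$, and the bound $\dim Y \leq N + \dim S$ recorded in Corollary \ref{cor: conj-1.1-DM} yields
$$ \dim \mathcal{Y} - \dim \mathcal{X}^N \;=\; \dim Y - N \;\leq\; \dim S, $$
which is exactly the desired codimension bound. There is no real obstacle beyond invoking the previous corollary; the only minor subtlety is to ensure that $\mathcal{Y}$ genuinely qualifies as $\Phi^{\times N}$-special, which in this constant-family setting reduces to the periodicity of $Y$ under $F^{\times N}$ already established, and to verify the irreducibility of $Y$ (which follows since it is the image of the irreducible fiber product $\mathcal{X}^N$ under the natural projection to $(\P^2)^N$, using that the generic fiber of $\mathcal{X} \to S$ is irreducible).
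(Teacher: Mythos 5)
Your proof is correct and is essentially the same as the paper's: the paper invokes the bound $r_{\Phi^{\times N}, \mathcal{X}^N} \le \min\{2N,\, N + \dim S\}$ recorded in Corollary \ref{cor: conj-1.1-DM} and immediately performs the dimension count, whereas you re-derive the explicit $\Phi^{\times N}$-special subvariety $S \times Y$ (which is precisely how that bound on $r$ was obtained inside the proof of Corollary \ref{cor: conj-1.1-DM}) before performing the identical count. One small nit: the upgrade from $F^{\times N}(Y) \subseteq Y$ to $F^{\times N}(Y) = Y$ comes from $F^{\times N}$ being a \emph{finite} (hence dimension-preserving and closed) morphism together with irreducibility of $Y$, not merely from surjectivity of $F^{\times N}$ on $(\P^2)^N$.
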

 \begin{proof}

This follows directly from Corollary \ref{cor: conj-1.1-DM}, in the same way that \cite[Conjecture 1.1]{DM24} implies \cite[Conjecture 1.2]{DM24}. 
Since we have 
\[
r_{\Phi^{\times N}, \mathcal{X}^N} \leq \min\{2N,\, N + \dim S\}
\quad \text{and} \quad 
\dim \mathcal{X}^N = N + \dim S,
\]
it follows that there exists a $\Phi^{\times N}$-special subvariety 
\[
Y \subseteq S \times (\P^2)^N
\]
of dimension at most 
\[
\min\{2N + \dim S,\, N + 2\dim S\}
\]
containing $\mathcal{X}^N$. 
Hence, $\mathcal{X}^N$ has codimension at most $\dim S$ within $Y$.
 
 \end{proof}

The corollary \ref{cor: conj-1.1-DM} establishes a special case of the following conjecture. 

\begin{conj}\label{conj-const-1.2}
Let $S$ be a smooth and irreducible quasi-projective variety defined over $\C$, 
and let $K$ be a positive integer. 
Let $\Phi : S \times \P^K \to S \times \P^K$ be a family of endomorphisms such that for every point $(s, p) \in S \times \P^K$,
\[
\Phi(s,p) = (s, F_s(p)),
\]
where $F_s$ is an endomorphism of $\P^K$ of degree greater than $1$. 
Let $\mathcal{X} \subseteq S \times \P^K$ be an irreducible subvariety that projects dominantly onto $S$, 
is flat over $S$, and such that $\mathcal{X}_s$ is a preperiodic subvariety under $F_s$ for a Zariski dense set of $s \in S$. 
Then for any positive integer $N$, we have
\[
\hat{T}^{\,r_{\Phi^{\times N}, \mathcal{X}^N}}_{\Phi^{\times N}} \wedge [\mathcal{X}^N] \neq 0.
\]
\end{conj}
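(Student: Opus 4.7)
The plan is to mirror the proof of Corollary \ref{cor: conj-1.1-DM} as closely as possible. The argument there consisted of two logically independent pieces: first, producing an ambient $\Phi^{\times N}$-special subvariety $\mathcal{Y} \subseteq S \times (\P^K)^N$ containing $\mathcal{X}^N$ of controlled relative dimension; and second, using the projection formula together with local uniform convergence of dynamical potentials (\cite[Chapter III, Corollary 3.6]{Dem}) to deduce that
\[
\hat{T}_{\Phi^{\times N}}^{\wedge r_{\Phi^{\times N},\mathcal{X}^N}} \wedge [\mathcal{X}^N] \;\geq\; \hat{T}_{\Phi^{\times N}}^{\wedge \dim_S \mathcal{Y}} \wedge [\mathcal{Y}] \;>\; 0,
\]
after integrating against a test form built from pullbacks of Fubini--Study forms. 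The second piece is essentially dimension- and family-agnostic; it transcribes without modification to the general setting of Conjecture \ref{conj-const-1.2} once $\mathcal{Y}$ is in hand.

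The real content therefore lies in constructing $\mathcal{Y}$. The natural candidate is the closure of the fiber-product incidence variety
\[
\mathcal{Y} \;\coloneqq\; \overline{\{(s,p_1,\dots,p_N) \in S \times (\P^K)^N : p_i \in \mathcal{X}_s \text{ for each } i = 1,\dots,N\}},
\]
whose relative dimension is bounded by $\min\{KN,\, N\cdot r_{\Phi,\mathcal{X}} + \dim S\}$. What must be established is that, after replacing $\Phi$ by a suitable iterate, $\mathcal{Y}$ is $\Phi^{\times N}$-invariant and $\Phi^{\times N}$-special. This is precisely the content of a family-and-higher-codimension generalization of Theorem \ref{thm: main-1}: a subvariety of the relative Chow variety over $S$ containing a Zariski-dense set of fiberwise preperiodic subvarieties of $\P^K$ should be invariant under the induced map, with the generic degree of $F_s(\mathcal{X}_s)$ stabilized (at worst along an arithmetic progression, as in Proposition \ref{conj: cor-degree-stable-family}).

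The hard part is establishing this generalization unconditionally. The proof of Theorem \ref{thm: main-1} exploits features that are very specific to the case $K=2$, constant family, codimension one: the transversality-at-infinity statement of Lemma \ref{lem: transvers-non-super-E}, the blow-up analysis of Proposition \ref{prop: moduli-DMM-1-D} via the resolution-tree bound of Lemma \ref{lem: finite-blows-up-deter}, and the control of branches at $H_\infty$ from \cite[Lemma 6.6]{Xie23}. None of these tools generalize in an obvious way: higher-codimension subvarieties of $\P^K$ need not be curves, there is no analogue of the line at infinity that automatically yields only finitely many branches, and in a non-constant family the map $F_s|_{H_\infty}$ itself varies, so the ``non-superattracting periodic points'' used crucially in Lemma \ref{lem: one-point-or-web} are no longer a fixed geometric object. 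In the absence of such an unconditional input, one can at best invoke Theorem \ref{thm: DMM-implies-general-main} to obtain the required invariance conditionally on Conjecture \ref{conj: DMM} and on the general-type assumption on $(F_s, Z_s)$.

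A reasonable attack would therefore proceed in stages: first extend Theorem \ref{thm: main-1} to $\mathcal{X}$ of arbitrary codimension inside $\P^2$ by adapting the blow-up-and-branch-at-infinity machinery to $0$-dimensional fibers, which should yield the constant-family, $K=2$ case of Conjecture \ref{conj-const-1.2} unconditionally; next treat constant families for arbitrary $K$ by applying Theorem \ref{thm: DMM-implies-general-main} conditionally under Conjecture \ref{conj: DMM} and a general-type hypothesis, inserting the resulting invariance into the current-convergence step; and finally address genuinely non-constant families by stratifying $S$ according to the isotrivial locus of $\Phi$ and combining the constant-family case fiberwise with the conditional degree stabilization of Proposition \ref{conj: cor-degree-stable-family}. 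In every stage, the positivity step is routine; the entire difficulty concentrates on producing the $\Phi^{\times N}$-invariant ambient $\mathcal{Y}$, and this is the obstacle I expect to be the decisive one.
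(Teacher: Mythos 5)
Your proposal goes down an entirely different (and much harder) road than the paper. Note first that the statement you were given is labeled a conjecture, and the paper does not prove it unconditionally; what the paper establishes is the lemma that Conjecture~\ref{conj: DM24-conj-1,1} implies Conjecture~\ref{conj-const-1.2}, and that implication is the ``paper's own proof'' you should have been aiming for. That argument is short and requires none of the machinery you are trying to import. One Segre-embeds $(\P^K)^N$ into $\P^M$, lets $\iota\colon S\times(\P^K)^N\hookrightarrow S\times\P^M$ be the induced map and $\Psi$ the endomorphism of $S\times\P^M$ extending $\Phi^{\times N}$, and sets $Y=\iota(\mathcal{X}^N)$. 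Two observations then close the argument: (i) since $\mathcal{X}_s$ is preperiodic under $F_s$ for a Zariski dense set of $s$, preperiodic points of $\Phi$ are dense in those fibers, hence preperiodic points of $\Phi^{\times N}$ are dense in $\mathcal{X}^N$ and thus in $Y$; (ii) $S\times\iota((\P^K)^N)$ is $\Psi$-special and contains $Y$, so $r_{\Psi,Y}=r_{\Phi^{\times N},\mathcal{X}^N}$. Applying Conjecture~\ref{conj: DM24-conj-1,1} to $(\Psi,Y)$ then gives $\hat{T}^{\,r_{\Psi,Y}}_{\Psi}\wedge[Y]\neq 0$, which pulls back to the desired non-vanishing for $(\Phi^{\times N},\mathcal{X}^N)$. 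No explicit construction of a $\Phi^{\times N}$-special $\mathcal{Y}$, no projection-formula/current-convergence step, and no appeal to any degree stabilization are needed.

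Your proposal instead tries to replicate the proof of Corollary~\ref{cor: conj-1.1-DM}, which is the \emph{unconditional} result in the special case of a constant family on $\P^2$. That proof has to build a genuine $F^{\times N}$-invariant $Y\subseteq(\P^2)^N$ by hand because it cannot invoke the relative DMM conjecture; Theorem~\ref{thm: main-1} supplies the needed invariance. When the target is merely the implication from Conjecture~\ref{conj: DM24-conj-1,1}, that entire construction is superfluous: the conjecture already delivers the non-vanishing of the intersection current, and the only thing to check is that the hypotheses transfer under the Segre embedding. The difficulties you correctly identify --- no analogue of Lemma~\ref{lem: transvers-non-super-E} or the branch-at-infinity bound in higher codimension or $K>2$, the parameter-dependence of $F_s|_{H_\infty}$, the lack of an unconditional invariance statement --- are all real obstructions to an \emph{unconditional} proof, but they are simply not in scope once one targets the conditional statement. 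Your fallback via Theorem~\ref{thm: DMM-implies-general-main} also imports a general-type hypothesis that is nowhere in the statement of Conjecture~\ref{conj-const-1.2}. In short: what is missing from your write-up is the one-line observation that the conjecture follows from Conjecture~\ref{conj: DM24-conj-1,1} by Segre embedding, and everything else you propose is either unnecessary for this target or (as you concede) out of reach.
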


We note explicitly that this conjecture is a direct consequence of Conjecture \ref{conj: DM24-conj-1,1} 
by embedding $(\P^K)^N$ into some projective space $\P^M$ via the Segre embedding 
and viewing $\mathcal{X}^N$ as a subvariety of $\P^M$. 
This makes clear how our result connects to Conjecture \ref{conj: DM24-conj-1,1} proposed in \cite{DM24}.

 \begin{lem}
     Conjecture \ref{conj: DM24-conj-1,1} implies Conjecture \ref{conj-const-1.2}
 \end{lem}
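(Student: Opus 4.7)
The plan is to reduce Conjecture~\ref{conj-const-1.2} to Conjecture~\ref{conj: DM24-conj-1,1} by embedding the fiber product $(\P^K)^N$ into a single projective space via the Segre embedding $\sigma: (\P^K)^N \hookrightarrow \P^M$ with $M = (K+1)^N - 1$. Since $S$ is irreducible, the degree $d$ of $F_s$ is locally constant and hence globally constant. The product family $\Phi^{\times N}$ then corresponds under $\sigma$ to an algebraic family $\tilde{\Phi}: S \times \P^M \dashrightarrow S \times \P^M$ of self-maps of degree $d$, whose indeterminacy locus lies outside $S \times \sigma((\P^K)^N)$ and on which $\tilde{\Phi}$ restricts to a morphism of the invariant Segre image. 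I would either argue directly that the polynomials $\tilde{F}_\alpha$ produced by expanding $F_s^{(i_1)}(x^{(1)})\cdots F_s^{(i_N)}(x^{(N)})$ in the Segre coordinates have no simultaneous zero on $\P^M$ (which follows from the fact that each $F_s$ is a morphism of $\P^K$), or, failing that, resolve indeterminacy by a sequence of blow-ups of $S \times \P^M$ supported away from $S \times \sigma((\P^K)^N)$, so as to obtain an honest family of endomorphisms to which Conjecture~\ref{conj: DM24-conj-1,1} applies.

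Next, I would transfer the density hypothesis. For each $s$ in the given Zariski-dense subset of $S$, the fiber $\mathcal{X}_s$ is preperiodic under $F_s$, hence $(\mathcal{X}_s)^N = \mathcal{X}^N_s$ is preperiodic under $F_s^{\times N}$; by the standard equidistribution of preperiodic points for polarized endomorphisms (Zhang's theorem), the $F_s^{\times N}$-preperiodic points are Zariski-dense in $(\mathcal{X}_s)^N$. Letting $s$ vary over the dense set of parameters, this yields a Zariski-dense subset of $\tilde{\Phi}$-preperiodic points in $\sigma(\mathcal{X}^N) \subseteq S \times \P^M$. Applying Conjecture~\ref{conj: DM24-conj-1,1} to $\tilde{\Phi}$ and $\sigma(\mathcal{X}^N)$ then yields
\[
\hat{T}_{\tilde{\Phi}}^{\wedge r_{\tilde{\Phi}, \sigma(\mathcal{X}^N)}} \wedge [\sigma(\mathcal{X}^N)] \neq 0.
\]

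Finally, I would transfer this nonvanishing back to $S \times (\P^K)^N$ using the Segre-Fubini-Study identity $\sigma^* \O_{\P^M}(1) = \O(1,\ldots,1)$, which propagates to the canonical Green currents to give $\sigma^* \hat{T}_{\tilde{\Phi}} = \hat{T}_{\Phi^{\times N}}$ on $S \times (\P^K)^N$. Combined with the equality of relative special dimensions $r_{\tilde{\Phi}, \sigma(\mathcal{X}^N)} = r_{\Phi^{\times N}, \mathcal{X}^N}$, coming from the natural correspondence between $\tilde{\Phi}$-special subvarieties of $S \times \P^M$ containing $\sigma(\mathcal{X}^N)$ and $\Phi^{\times N}$-special subvarieties of $S \times (\P^K)^N$ containing $\mathcal{X}^N$ (via intersection with, and extension through, the Segre image), this gives the desired conclusion. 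The main obstacle I anticipate is the careful bookkeeping verifying this correspondence of special structures: the auxiliary polarizable endomorphism $\mathbf{\Psi}$ commuting with $\tilde{\Phi}^n$ on a special subvariety containing $\sigma(\mathcal{X}^N)$ must descend to a polarizable endomorphism commuting with $(\Phi^{\times N})^n$ on the corresponding subvariety containing $\mathcal{X}^N$, and conversely any such $\mathbf{\Psi}$ on the $(\P^K)^N$ side must extend back across Segre. The remaining ingredients, namely equidistribution of preperiodic points and Segre compatibility of canonical Green currents, are standard in pluripotential theory.
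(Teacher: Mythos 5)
Your proposal takes essentially the same route as the paper: embed $(\P^K)^N$ into $\P^M$ via Segre, push $\Phi^{\times N}$ forward to a family on $S\times\P^M$, transfer the density of preperiodic points, and match up the relative special dimensions and Green currents. Two small cautions: first, your direct argument that the Segre-coordinate polynomials of $\sigma\circ F_s^{\times N}$ have no common zero on all of $\P^M$ does not quite follow from $F_s$ being a morphism of $\P^K$ (a priori one only controls zeros on the Segre image); the correct justification is that $F_s^{\times N}$ is polarized with respect to $\O(1,\dots,1)$, which is exactly the bundle giving the Segre embedding, so the extension to an endomorphism of $\P^M$ is the standard Fakhruddin extension. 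Second, your fallback of resolving indeterminacy by blow-ups would leave you with a family on a blown-up space, to which Conjecture \ref{conj: DM24-conj-1,1} (stated for families on $S\times\P^N$) no longer directly applies, so that route would need reworking; fortunately the primary route already works. Otherwise your handling of density (equidistribution suffices, though the paper just asserts Zariski density of preperiodic points in preperiodic fibers) and your bookkeeping of the special-dimension correspondence match the paper's, which makes the latter transparent by observing that $S\times\iota((\P^K)^N)$ is itself $\Psi$-special.
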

 \begin{proof}

   Notice that we can view $\mathcal{X}^N$ as a subvariety of $S \times \P^M$ for some positive integer $M$ via the Segre embedding. 
Under this embedding, $\Phi^{\times N}$ extends naturally to an endomorphism on $S \times \P^M$, 
since each $F_s$ is polarizable for $s \in S$. 
Let 
\[
\iota : S \times (\P^K)^N \longrightarrow S \times \P^M
\]
denote this embedding map, and let 
\[
\Psi : S \times \P^M \longrightarrow S \times \P^M
\]
be the extended endomorphism induced by $\Phi^{\times N}$. 
Set $Y = \iota(\mathcal{X}^N)$. 
Then
\[
\hat{T}^{\,r_{\Psi, Y}}_{\Psi} \wedge [Y] \neq 0
\quad \Longrightarrow \quad
\hat{T}^{\,r_{\Psi, Y}}_{\Phi^{\times N}} \wedge [\mathcal{X}^N] \neq 0.
\]

Observe that $S \times \iota((\P^K)^N) \subseteq S \times \P^M$ is clearly $\Psi$-special and contains $Y$, 
so that $r_{\Psi, Y} = r_{\Phi^{\times N}, \mathcal{X}^N}$. 
Moreover, the assumption that for a Zariski dense set of $s \in S$, 
the fiber $\mathcal{X}_s$ is preperiodic under $F_s$ implies that 
$\Prep(\Phi) \cap \mathcal{X}_s$ is Zariski dense in $\mathcal{X}_s$ for such $s$. 
Consequently, for those $s$, the fiber $(\mathcal{X}^N)_s$ contains a Zariski dense subset of $\Prep(\Phi^{\times N}_s)$, 
and hence $\Prep(\Phi^{\times N}) \cap \mathcal{X}^N$ is Zariski dense in $\mathcal{X}^N$. 
Therefore, $\iota(\Prep(\Phi^{\times N}) \cap \mathcal{X}^N)$ is Zariski dense in $Y$, 
and Conjecture~\ref{conj: DM24-conj-1,1} directly implies the desired statement.
 \end{proof}
\section{Degree Stabilization for Families of Endomorphims}\label{sect: degree-stabilization}
In this section, we collect some conditional results on degree stabilization allowing for a family of varying endomorphisms. Let's first show that Conjecture \ref{conj: DM24-conj-1,1} implies the degree stabilization phenomenon in this setting of a family of endomorphisms. 

\begin{prop}\label{prop: special-implies-deg}
     Let $S$ be a smooth and irreducible quasi-projective variety defined over $\C$ of positive dimension and $M$ be a positive integer. Let $\Phi : S \times \P^M \to S \times \P^M$ be a family of endomorphisms such that for every point $(s,p) \in S \times \P^M$ 
     $$\Phi(s,p) = (s, F_s(p))$$
     for an endomorphism $F_s$ of degree $>1$. Let $\mathcal{X} \subseteq S \times \P^M$ be an irreducible subvariety of dimension $>\dim S$ that projects dominantly to $S$ and is flat over $S$.

     If there exists a positive integer $N $ and a proper irreducible $\Phi^{\times N}$-preperiodic subvariety $\mathcal{Y} \subseteq S \times (\P^M)^N$ containing $(\Phi^{\times N})^l(\mathcal{X}^N)$ with some non-negative integer $l$ as a codimension $\leq \min \{N-1, \dim S\}$ subvareity, then there exists positive integers $n,m$ and an infinite set of positive integers $I$ such that for any pair of integers $k_1, k_2 \in I$ we have that for a generic point $s \in S$, $$\deg(F^{m + nk_1}_s(\mathcal{X}_s)) = \deg(F^{m + nk_2}_s(\mathcal{X}_s)).$$
 \end{prop}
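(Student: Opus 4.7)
The plan is to first reduce to the periodic case. Since $\mathcal{Y}$ is $\Phi^{\times N}$-preperiodic, there exist positive integers $m_0$ and $n$ such that $\mathcal{Y}_0 := (\Phi^{\times N})^{m_0}(\mathcal{Y})$ is periodic of period $n$ under $\Phi^{\times N}$. Applying $(\Phi^{\times N})^{m_0}$ to the inclusion $(\Phi^{\times N})^l(\mathcal{X}^N) \subseteq \mathcal{Y}$ and using the periodicity of $\mathcal{Y}_0$ gives, with $m := l+m_0$,
\[
(\Phi^{\times N})^{m+nk}(\mathcal{X}^N) \subseteq \mathcal{Y}_0 \quad \text{for all } k \geq 0.
\]
Taking the fiber over a generic $s \in S$ and writing $A_k := F_s^{m+nk}(\mathcal{X}_s)$ and $Y_s := (\mathcal{Y}_0)_s$, we obtain $A_k^N \subseteq Y_s$, where $Y_s$ lies in the finite periodic cycle of $\mathcal{Y}_0$ and hence has uniformly bounded degree in $(\P^M)^N$ for $s$ in a Zariski dense open subset of $S$.

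The crucial technical step is to bound the sequence $\{\deg A_k\}_{k\ge 0}$ uniformly in $k$ for generic $s$. This is where the codimension hypothesis $c \leq \min\{N-1,\dim S\}$ is essential. The idea is to study the locus in the relative Chow scheme of $\mathcal{Y}_0 \to S$ parametrizing cycles of the form $V \times_S \cdots \times_S V$ ($N$-fold self fiber products of an irreducible subvariety $V \subseteq S \times \P^M$ of the correct relative dimension); the iterates $(\Phi^{\times N})^{m+nk}(\mathcal{X}^N)$ all belong to this locus. The bound $c \leq N-1$ is used to show that the rigidity of the product structure $A_k^N \subseteq Y_s$ forces this locus to be of finite type over $S$, while $c \leq \dim S$ controls the relative dimension of $\mathcal{Y}_0$ over $S$. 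Once this finiteness is established, $\deg A_k$ can only take finitely many values as $k$ varies, for $s$ in a Zariski dense open subset of $S$.

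With boundedness in hand, the conclusion follows by constructibility and the pigeonhole principle. Each function $s \mapsto \deg F_s^{m+nk}(\mathcal{X}_s)$ is constructible on $S$ and therefore equal to some integer $D_k$ on a Zariski dense open subset $U_k \subseteq S$. Uniform boundedness of $\{D_k\}$ then yields, by pigeonhole, an infinite set $I \subseteq \Z_{>0}$ on which $D_k$ takes a common value; since we work over $\C$, the countable intersection $\bigcap_{k \in I} U_k$ remains dense in $S(\C)$, and any $s$ in this intersection witnesses the required equality of degrees.

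The principal obstacle I expect is the boundedness step. A priori, subvarieties of a fixed ambient variety can have arbitrarily high degree, so one must use in an essential way both that $A_k^N$ is a full $N$-fold self-product of the variable $A_k$ and that it sits inside the proper preperiodic $Y_s$ with the prescribed codimension. Translating this rigidity into a precise finite-type statement for the relative Chow scheme of $\mathcal{Y}_0 \to S$ is the heart of the argument, and the quantitative bound $\min\{N-1,\dim S\}$ is precisely what should make such a statement available.
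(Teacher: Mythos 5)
Your high-level plan is right — reduce to the periodic case, establish a uniform bound on $\deg F_s^{m+nk}(\mathcal{X}_s)$, and finish by pigeonhole — but you leave the central step, the uniform degree bound, unproved. You flag it yourself as ``the heart of the argument'' and ``the principal obstacle,'' and the sketch you offer (a finite-type statement about the locus of $N$-fold self-fiber-products inside the relative Chow scheme of $\mathcal{Y}_0 \to S$) is not executed. As stated, that sketch looks circular: the usual route to showing that a locus in a Chow scheme is of finite type over $S$ is to first bound the degree of the cycles it parametrizes, which is exactly what you are trying to prove. The codimension hypothesis alone does not hand you that finiteness for free.

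The paper closes the gap quite differently, by induction on $N$. Project $\mathcal{Y}$ via $\pi : S \times (\P^M)^N \to S \times (\P^M)^{N-1}$. Either $\pi(\mathcal{Y})$ contains $\mathcal{X}^{N-1}$ in codimension $\leq \min\{N-2,\dim S\}$, in which case the induction hypothesis applies directly; or the dimension count forced by $c \leq \min\{N-1,\dim S\}$ shows that the generic fiber of $\pi|_{\mathcal{Y}}$ has dimension exactly $\dim \mathcal{X}_s$. In the latter case, generic flatness supplies a dense open $U \subseteq \pi(\mathcal{Y})$ over which $\deg(\mathcal{Y}_p)$ is a constant $R$. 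If all large iterates of $\mathcal{X}^{N-1}$ eventually land in $U^c$, one passes to a smaller preperiodic subvariety of $\pi(\mathcal{Y})$ and again applies induction; otherwise infinitely many iterates meet $U$, and for those $k$ the subvariety $F_{s}^{m+nk}(\mathcal{X}_{s})$ is a component of a fiber $\mathcal{Y}_{p_k}$ of degree $R$, hence $\deg F_{s}^{m+nk}(\mathcal{X}_{s}) \leq R$. This is precisely the bound your pigeonhole step needs. So the missing idea in your proposal is this projection-and-induction on $N$: it is what converts the codimension hypothesis into a concrete degree bound, and without it (or a genuinely worked-out alternative) the proof is incomplete.
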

 \begin{proof}

     By our assumption , we know that there exists positive integers $n,m$ such that $(\Phi^{\times N})^m(\mathcal{Y}) = (\Phi^{\times N})^{m+nk}(\mathcal{Y})$ for all $k \in \N$.

     Let's prove it inductively on $N$. The base case is $N =1$. In this case we have $\Phi^l(\mathcal{X})$ itself is $\Phi$-special and, by our assumption on $\Phi$, this implies $\mathcal{X}$ is preperiodic under $\Phi$. Therefore, for every $s \in S$, we have $$F_s^m(\mathcal{X}_s) = F_s^{m+nk}(\mathcal{X}_s)$$
     and in particular
     $$ \deg(F^m_s(\mathcal{X}_s)) = \deg(F^{m + nk}_s(\mathcal{X}_s)),$$
     for all positive integer $k$.

     Now, suppose $N \geq 2$ and assume that we have the statement proved for any $N' \leq N-1$. If the projection of $\mathcal{Y}$ to $S \times (\P^M)^{N-1}$, denote as $\pi(\mathcal{Y})$, contains $\mathcal{X}^{N-1}$ as a codimension $\leq \min\{N-2, \dim S\}$ subvariety, then we apply the induction hypothesis to conclude the proof.

     Otherwise, we will have $$ \dim(\mathcal{X}^{N-1}) + \min\{N-1, \dim S\} \geq \dim(\pi(\mathcal{Y})) > \dim(\mathcal{X}^{N-1}) + \min\{N-2, \dim S\},$$
     since $\dim(\mathcal{Y}) \leq \dim(\mathcal{X}^N) + \min\{N-1, \dim S\}$. By \cite[Chapter \RNum{3}, Theorem 9.9 and Corollary 9.10]{Har77}, there exists a non-empty Zariski open subset $U$ in $\pi(\mathcal{Y})$ such that 
      $\deg(\mathcal{Y}_p)$
     is constant with $p$ varies in $U$, where $\mathcal{Y}_p$ is the fiber of $\mathcal{Y}$ over $p$. 
     
     Suppose there exists positive integers $k_0,k_1$ such that for every $k > k_0$, we have that $(\Phi^{\times (N-1)})^{m + nk_1k}(\mathcal{X}^{N-1}) \subseteq U^c$. Then there exists a closed subset $$V \coloneqq \overline{\bigcup_{k \geq k_0}(\Phi^{\times (N-1)})^{m + nk_1k}(\mathcal{X}^{N-1})} \subseteq U^c $$
     which is $\Phi^{\times(N-1)}$ preperiodic and contains $(\Phi^{\times (N-1)})^{l'}(\mathcal{X}^{N-1})$
     for some positive integer $l'$. Then the induction hypothesis will conclude the proof as $\dim(V) < \dim(\pi(\mathcal{Y}))$.

     Suppose such a pair of integers does not exist. This in particular implies that there is an infinite set of positive integers $I$ such that for any $j \in I$  $(\Phi^{\times (N-1)})^{m + nj}(\mathcal{X}^{ N-1})$ is not contained in $U^c$. Then for any pair of integers $k_3, k_4 \in I$, we have
     $$ (\Phi^{\times (N-1)})^{m + nk_i}(\mathcal{X}^{N-1}) $$
     is not contained in $U^c$, for $i \in \{3,4\}$. Since $U \cap (\Phi^{\times (N-1)})^{m + nk_i}(\mathcal{X}^{N-1})$ is a non-empty open subset in $(\Phi^{\times (N-1)})^{m + nk_i}(\mathcal{X}^{N-1})$ for $i\in \{3,4\}$, this implies that for a generic $s_0 \in S$, there exists $$p_{k_i} \in U \cap (\Phi^{\times (N-1)})^{m + nk_i}(\mathcal{X}^{N-1})$$
     such that $\pi_0(p_{k_i}) = s_0 \in S$ and $$F_{s_0}^{m+nk_i}(\mathcal{X}_{s_0}) \subseteq \mathcal{Y}_{p_{k_i}},$$
     with $i \in \{3,4\}$. Therefore, since $\dim(F_{s_0}^{m+nk_i}(\mathcal{X}_{s_0})) = \dim(\mathcal{Y}_{p_{k_i}})$, for $i\in \{3,4\}$, $\deg(\mathcal{Y}_{p_{k_3}}) = \deg(\mathcal{Y}_{p_{k_4}})$, we have 
$$\deg(\mathcal{Y}_{p_{k_3}}) \geq \deg(F_{s_0}^{m+nk_3}(\mathcal{X}_{s_0})) ,$$
$$\deg(\mathcal{Y}_{p_{k_4}}) \geq  \deg(F_{s_0}^{m+nk_4}(\mathcal{X}_{s_0})) .$$
This implies that for all pair of $k_3, k_4 \in I$, we have 
$$\deg(F_{s_0}^{m+nk_3}(\mathcal{X}_{s_0})),  \deg(F_{s_0}^{m+nk_4}(\mathcal{X}_{s_0})) \leq R$$
for a generic $s_0 \in S$ and a positive integer $R$ only depending on $U$ and $\mathcal{Y}$. Then, by the pigeonhole princple, there exists an infinite subset $I' \subseteq I$ such that for all pairs of $k_3, k_4 \in I'$, we have

     $$ \deg(F^{m+nk_3}_{s_0}(\mathcal{X}_{s_0})) = \deg(F^{m+ nk_4}_{s_0}(\mathcal{X}_{s_0})) \leq R.$$

 \end{proof}

\begin{prop}[Proposition \ref{conj: cor-degree-stable-family}]
Under the same assumptions as Proposition \ref{prop: special-implies-deg}, 
suppose that for a Zariski dense subset of $s \in S$, the fiber $\mathcal{X}_s$ is preperiodic under $\Phi_s$. Assume $(\Phi, \mathcal{X})$ is of general type.
If Conjecture \ref{conj: DM24-conj-1,1} holds, then there exist positive integers $n, m$ 
and an infinite set of positive integers $I$ such that for any pair of integers $k_1, k_2 \in I$, 
we have, for a generic point $s \in S$,
\[
\deg(F^{m + nk_1}_s(\mathcal{X}_s)) = \deg(F^{m + nk_2}_s(\mathcal{X}_s)).
\]
\end{prop}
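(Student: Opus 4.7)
The plan is to apply the Relative Dynamical Manin--Mumford Conjecture \ref{conj: DM24-conj-1,1} to the fiber product $(\Phi^{\times N}, \mathcal{X}^N)$ for a well-chosen $N$, use the general type hypothesis to upgrade the resulting $\Phi^{\times N}$-special subvariety to a $\Phi^{\times N}$-preperiodic one, and then feed the output into Proposition \ref{prop: special-implies-deg}. To prepare the input, I would observe that whenever $\mathcal{X}_s$ is preperiodic under the polarized endomorphism $\Phi_s$, the preperiodic points of $\Phi_s$ are Zariski dense in $\mathcal{X}_s$; consequently the $\Phi^{\times N}_s$-preperiodic points are Zariski dense in $(\mathcal{X}_s)^N = (\mathcal{X}^N)_s$. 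Taking the union over the Zariski dense set of such $s$ produces a Zariski dense set of $\Phi^{\times N}$-preperiodic points in $\mathcal{X}^N$, for every $N \in \Z^+$.

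Next, I would fix $N = \dim S + 1$, so that $\min\{N-1,\, \dim S\} = \dim S$. Applying Conjecture \ref{conj: DM24-conj-1,1} yields
\[
\hat{T}_{\Phi^{\times N}}^{\wedge r_N} \wedge [\mathcal{X}^N] \neq 0, \qquad r_N := r_{\Phi^{\times N}, \mathcal{X}^N},
\]
and by definition of $r_N$ there exists a $\Phi^{\times N}$-special subvariety $\mathcal{Y} \subseteq S \times (\P^M)^N$ containing $\mathcal{X}^N$ with $\dim_S \mathcal{Y} = r_N$. The non-triviality of the wedge requires $r_N + \mathrm{codim}(\mathcal{X}^N) \leq \dim S + NM$, equivalently $r_N \leq \dim \mathcal{X}^N$, so that $\dim \mathcal{Y} - \dim \mathcal{X}^N \leq \dim S = \min\{N-1,\, \dim S\}$. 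Thus $\mathcal{X}^N$ sits inside $\mathcal{Y}$ with exactly the codimension bound required by Proposition \ref{prop: special-implies-deg}.

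Third, I would promote ``$\Phi^{\times N}$-special'' to ``$\Phi^{\times N}$-preperiodic'' via the general type hypothesis, mimicking the argument in the proof of Theorem \ref{thm: DMM-implies-general-main}. The $\Phi^{\times N}$-special structure supplies a polarizable $\mathbf{\Psi} : \mathbf{Z} \to \mathbf{Z}$ commuting with $\mathbf{\Phi}^n$ on an ambient $\mathbf{Z} \supseteq \mathbf{Y}$, under which $\mathbf{Y}$ is preperiodic. After replacing $\mathbf{\Psi}$ by a suitable iterate, it becomes split into factors $\mathbf{\Psi}_1, \ldots, \mathbf{\Psi}_N$ on the $N$ copies of $\P^M$, each commuting with $\mathbf{\Phi}^n$ on the projection of $\mathbf{Y}$ onto the $i$-th factor, a projection that contains $\mathbf{X}$. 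The general type hypothesis then forces every $\mathbf{\Psi}_i$ to be an iterate of $\Phi$, so that some iterate of $\mathbf{\Psi}$ is an iterate of $\Phi^{\times N}$; the preperiodicity of $\mathbf{Y}$ under $\mathbf{\Psi}$ thus translates into preperiodicity of $\mathcal{Y}$ under $\Phi^{\times N}$.

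Finally, I would apply Proposition \ref{prop: special-implies-deg} to the $\Phi^{\times N}$-preperiodic $\mathcal{Y}$ (with $l=0$) to produce positive integers $n, m$ and an infinite $I \subseteq \Z^+$ with $\deg F_s^{m+nk_1}(\mathcal{X}_s) = \deg F_s^{m+nk_2}(\mathcal{X}_s)$ for every $k_1, k_2 \in I$ and generic $s \in S$. The main obstacle will be the third step: the general type assumption is stated on a single $\P^M$-factor, so one must leverage the splitting of polarizable endomorphisms on $(\P^M)^N$ up to iteration and verify that each projection of $\mathbf{Y}$ onto a $\P^M$-factor genuinely contains $\mathbf{X}$, so that the general type clause can be applied componentwise to conclude that every $\mathbf{\Psi}_i$ is an iterate of $\Phi$. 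A secondary technical subtlety is justifying the codimension bound $\dim \mathcal{Y} - \dim \mathcal{X}^N \leq \dim S$ purely from the permissible bidegree of the non-zero current supplied by Conjecture \ref{conj: DM24-conj-1,1}, paralleling the dimension analysis carried out in the proof of Corollary \ref{cor: conj-1.1-DM}.
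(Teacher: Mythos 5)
Your proposal follows essentially the same route as the paper's own proof: pass to the fiber product $\mathcal{X}^N$ with $N > \dim S$ (you fix $N = \dim S + 1$, the paper keeps it generic), invoke Conjecture~\ref{conj: DM24-conj-1,1} (via its reformulation on $(\P^M)^N$, i.e. Conjecture~\ref{conj-const-1.2}) to produce a $\Phi^{\times N}$-special $\mathcal{Y}$ whose codimension over $\mathcal{X}^N$ is bounded by $\dim S$ using the bidegree constraint from the nonvanishing wedge, apply the general type hypothesis with the splitting of the polarizable commuting endomorphism to upgrade ``special'' to ``$\Phi^{\times N}$-preperiodic,'' and then feed this into Proposition~\ref{prop: special-implies-deg}. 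The only details you leave implicit which the paper handles explicitly are passing to an irreducible component of $\mathcal{Y}$ containing $\mathcal{X}^N$ (needed since Proposition~\ref{prop: special-implies-deg} asks for an irreducible preperiodic subvariety), verifying that $\mathcal{Y}$ is a \emph{proper} subvariety of $S \times (\P^M)^N$ (which is where the choice $N > \dim S$ is actually used), and noting explicitly that the application of Conjecture~\ref{conj: DM24-conj-1,1} on the product $(\P^M)^N$ goes through the Segre-embedding argument that the paper records as the implication to Conjecture~\ref{conj-const-1.2}; none of these affect the validity of the approach.
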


\begin{proof}
Note
\[
\hat{T}^{\,r_{\Phi^{\times N}, \mathcal{X}^N}}_{\Phi^{\times N}} \wedge [\mathcal{X}^N] \neq 0,
\]
implies $\dim (\mathcal{X}^N) \geq r_{\Phi^{\times N}, \mathcal{X}^N}$, 
which implies the existence of a $\Phi^{\times N}$-special subvariety of dimension at most 
$\dim(\mathcal{X}^N) + \dim S$. 
Since we have already shown that Conjecture \ref{conj: DM24-conj-1,1} implies Conjecture \ref{conj-const-1.2}, our assumption implies that 
we can find a $N > \dim S$ such that there exists a proper $\Phi^{\times N}$-special subvariety 
$\mathcal{Y} \subseteq S \times (\P^M)^N$ of dimension at most
\[
\dim(\mathcal{X}^N) + \dim S = N + 2\dim S < \dim(S \times (\P^M)^N),
\]
containing $\mathcal{X}^N$. 
Since $\mathcal{X}^N$ is irreducible, there exists an irreducible component of $\mathcal{Y}$ containing $\mathcal{X}^N$; 
by abuse of notation, we denote this component again by $\mathcal{Y}$.

Note that $\mathcal{Y}$ being $\Phi^{\times N}$\textnormal{-special} means that there exists 
a subvariety $\mathbf{Z} \subseteq (\mathbf{P}^M)^N$ containing $\mathbf{Y}$ which is invariant 
under a polarizable endomorphism $\mathbf{\Psi}$ on $(\mathbf{P}^M)^N$ and also under 
$(\mathbf{\Phi}^{\times N})^n$ for some $n > 0$, such that 
\[
\mathbf{\Psi} \circ (\mathbf{\Phi}^{\times N})^n 
= 
(\mathbf{\Phi}^{\times N})^n \circ \mathbf{\Psi}
\quad \text{on } \mathbf{Z}.
\]
Moreover, $\mathbf{Y}$ is preperiodic under $\mathbf{\Psi}$.

Since $\mathbf{\Psi}$ is polarizable, by replacing it with a suitable iterate, we may assume that 
$\mathbf{\Psi}$ is a split morphism.  
Because $\mathcal{Y}$ contains $\mathcal{X}^N$ and $\mathbf{Z}$ contains $\mathbf{Y}$, 
for each $i \in \{1, 2, \dots, N\}$ our assumptions imply that 
\[
\mathbf{\Psi}_i \circ \mathbf{\Phi}^n 
= 
\mathbf{\Phi}^n \circ \mathbf{\Psi}_i
\quad \text{on } \mathbf{Z}_i,
\]
where $\mathbf{Z}_i$ is the projection of $\mathbf{Z}$ to the $i$-th factor of 
$(\mathbf{P}^M)^N$ and contains $\mathbf{X}$, and $\mathbf{\Psi}_i$ denotes the corresponding 
projection of $\mathbf{\Psi}$ to that factor.  

By the assumption that $(\Phi, \mathcal{X})$ is of general type, 
it follows that $\mathbf{\Psi}$ must be an iterate of $\mathbf{\Phi}^{\times N}$. 
Consequently, $\mathbf{Y}$ is preperiodic under $\mathbf{\Phi}^{\times N}$, 
and hence $\mathcal{Y}$ is preperiodic under $\Phi^{\times N}$.



Then Proposition \ref{prop: special-implies-deg} applies, yielding the existence of positive integers 
$n, m$ and an infinite set of positive integers $I$ such that for any pair $k_1, k_2 \in I$, 
we have, for a generic point $s \in S$,
\[
\deg(F^{m + nk_1}_s(\mathcal{X}_s)) = \deg(F^{m + nk_2}_s(\mathcal{X}_s)).
\]
\end{proof}

\subsection{One-Dimensional Families of Polynomial Endomorphisms on $\P^2$ }
In this subsection, we verify the degree stabilization for one-dimensional families of regular polynomial endomorphisms under some ramification restriction on the line at infinity.

 Suppose $S = \A^1$, and we view 
\[
\Phi : \A^1 \times \P^2 \longrightarrow \A^1 \times \P^2
\]
as a family $F_t : \P^2_{\overline{\C(t)}} \to \P^2_{\overline{\C(t)}}$ parametrized by $t \in \P^1$. 
Then, the technique used in Section 3 can be applied to prove degree stabilization when, for each $a_i(t) \in \mathcal{X}_t \cap H_\infty$, the pair $(F_t|_{H_\infty}, a_i(t))$ does not share its forward orbit with a marked critical point.

\begin{thm}\label{thm: line-at-infinty-poly}
Let $F_t$ be a family of regular polynomial endomorphisms of $\P^2$ of degree $>1$, parametrized by $t \in \P^1$, and let $\mathcal{X}_t \subset \P^2$ be a family of curves. Suppose there are infinitely many $t_0 \in \P^1$ such that $\mathcal{X}_{t_0}$ is periodic under $F_{t_0}$, and set $f_t \coloneqq F_t|_{H_\infty}$ satisfying:
\begin{enumerate}
    \item $f_t$ is a polynomial of degree $>1$ over $\overline{\C(t)}$;
    \item For any $a_i(t) \in \mathcal{X}_t \cap H_\infty$, there do not exist $n \in \N$ and a polynomial $h_t$ sharing the same Julia set with $f_t$ such that
    \[
    f_t^n(a_i(t)) = h_t(c(t)),
    \]
    where $c(t)$ is a marked critical point of $h_t$, i.e., $(h_t)'(c(t)) \equiv 0$.
\end{enumerate}
Then, for a generic $t_0 \in \P^1$, we have
\[
\deg(\mathcal{X}_{t_0}) = \deg(F_{t_0}^k(\mathcal{X}_{t_0}))
\]
for all positive integers $k$.
\end{thm}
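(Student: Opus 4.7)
The plan is to combine Lemma~\ref{lem: transvers-non-super-E} with the parametric unlikely-intersection results of~\cite{FG22} to reduce the theorem to a fiberwise version of the argument used in the proof of Theorem~\ref{thm: main-1}. First I would write $\mathcal{X}_t \cap H_\infty$ as a finite union of algebraic sections $a_1(t), \dots, a_r(t)$, possibly after a finite base change, so that whenever $\mathcal{X}_{t_0}$ is periodic under $F_{t_0}$, each $a_i(t_0)$ is a periodic point of $f_{t_0}$. Recall that a periodic point $a_i(t_0)$ is superattracting exactly when its forward orbit under $f_{t_0}$ meets the critical locus of $f_{t_0}$.

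The first step, and the crux of the argument, is to use condition (2) together with~\cite{FG22} to show that for each $i$ the set
\[
\{\, t_0 \in \A^1 : f_{t_0}^m(a_i(t_0)) \text{ is a critical point of } f_{t_0} \text{ for some } m \geq 0 \,\}
\]
is finite. In the polynomial setting of~\cite{FG22}, whenever an algebraic marked point meets the critical orbit of $f_t$ for infinitely many parameters, one obtains an algebraic identity of the form $f_t^n(a_i(t)) = h_t(c(t))$ with $h_t$ a polynomial sharing the Julia set of $f_t$ and $c(t)$ a marked critical point of $h_t$; condition (2) is designed to rule this out. Removing this finite bad set from the infinite set of periodic parameters yields an infinite, hence Zariski-dense, subset $T \subseteq \A^1$ of parameters $t_0$ at which $\mathcal{X}_{t_0}$ is periodic under $F_{t_0}$ and no $a_i(t_0)$ is superattracting periodic for $f_{t_0}$.

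For every $t_0 \in T$ I would then apply Lemma~\ref{lem: transvers-non-super-E} to an iterate of $F_{t_0}$ fixing each $a_i(t_0)$: each intersection of $\mathcal{X}_{t_0}$ with $H_\infty$ is transverse and is the unique formal invariant branch at that point. The same statement applies to every curve in the $F_{t_0}$-cycle of $\mathcal{X}_{t_0}$, because the multiplier of a periodic cycle is invariant under the cyclic shift, so non-superattractiveness propagates around the orbit. Since $f_{t_0}$ acts bijectively on the union of the corresponding periodic $f_{t_0}$-orbits and $F_{t_0}^{-1}(H_\infty) = H_\infty$, I conclude, exactly as in the proof of Theorem~\ref{thm: main-1}, that
\[
\deg(F_{t_0}^k(\mathcal{X}_{t_0})) = |F_{t_0}^k(\mathcal{X}_{t_0}) \cap H_\infty| = |\mathcal{X}_{t_0} \cap H_\infty| = \deg(\mathcal{X}_{t_0})
\]
for every $k \geq 0$ and every $t_0 \in T$. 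Zariski density of $T$, together with constancy of the degree function on the flat locus of the image family $F^k(\mathcal{X}) \to \A^1$, then transfers this equality to a generic $t_0$. The main obstacle I anticipate is this first step: extracting the finiteness statement above from~\cite{FG22}, since the hypothesis in condition (2) is phrased through the "shared Julia set" equivalence of polynomial dynamical pairs, and some care is needed to translate it into the concrete assertion that the forward $f_t$-orbit of $a_i(t)$ persistently avoids the critical points of $f_t$ itself.
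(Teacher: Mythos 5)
Your proposal follows essentially the same strategy as the paper: show that (among the parameters where $\mathcal{X}_{t_0}$ is periodic) the set of $t_0$ at which some $a_i(t_0)$ is superattracting periodic is finite, then apply Lemma~\ref{lem: transvers-non-super-E} at the remaining infinitude of parameters and read the degree off from the transverse intersections with $H_\infty$. The finishing step and the reduction to marked points are identical to the paper's.

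One inaccuracy worth flagging: the set you propose to show is finite,
\[
\{\, t_0 \in \A^1 : f_{t_0}^m(a_i(t_0)) \text{ is a critical point of } f_{t_0} \text{ for some } m \ge 0 \,\},
\]
is strictly larger than what the hypotheses let you control. The input to \cite{FG22} (Theorem D) requires both marked points --- $a_i(t)$ and the relevant critical point $c(t)$ --- to be simultaneously \emph{preperiodic} at infinitely many parameters; this holds automatically when $a_i(t_0)$ lies in a superattracting cycle, but need not hold if the orbit of $a_i(t_0)$ merely passes through a critical point without being periodic. So the correct finiteness claim is about superattracting-periodic parameters, not about all parameters where the orbit meets the critical locus. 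Since you then intersect with the set of periodic parameters anyway, this is a presentation issue rather than a gap. The part you correctly identify as the main obstacle --- translating condition (2) into a usable rigidity statement --- is where the paper does real work: it splits into cases according to whether $f_t$ is conjugate to a power map or Chebyshev polynomial, versus non-special, and in the non-special case uses the Schmidt--Steinmetz factorization $f_t = \sigma \circ g_t^k$ (from \cite{SS95}) before invoking \cite[Theorem D]{FG22}, then repackages the resulting relation as $\sigma_2 \circ g_t^{l_1}(c(t)) = f_t^{l_2}(a(t))$ to violate condition (2). Your sketch leaves this case analysis implicit, but the overall route is the same.
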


\begin{proof}
We first show that under the above assumptions, for any $a(t) \in \mathcal{X}_t \cap H_\infty$, there are only finitely many $t_1 \in \A^1$ such that $a(t_1)$ lies in a superattracting periodic cycle of $f_{t_1}$. 

Suppose, for contradiction, that there are infinitely many $t_1 \in \A^1$ and some $a(t) \in \mathcal{X}_t \cap H_\infty$ such that $a(t_1)$ is superattracting periodic under $f_{t_1}$. 
Since $f_t$ has bounded degree, it has finitely many marked critical points. By the pigeonhole principle, there exists a marked critical point $c(t)$ such that
\[
\{ t_1 \in \P^1 : c(t_1) \in \Per(f_{t_1}), \, a(t_1) = f_{t_1}^{\,n_{t_1}}(c(t_1)), \, n_{t_1} \in \N \}
\]
is infinite.

\textbf{Case 1:} $f_t$ is conjugate to a power map or Chebyshev polynomial. Then there exists $l_t \in \overline{\C(t)}[x]$ such that
\[
l_t \circ S_d \circ l_t^{-1} = f_t,
\]
where $S_d$ is either a power map or Chebyshev polynomial of degree $d = \deg(f_t)$. 
The marked critical points are
\[
\{ l_t(c_i) : c_i \text{ is a critical point of } S_d \}.
\]
Thus, for infinitely many $t_1$,
\[
a(t_1) = f_{t_1}^{\,n_{t_1}}(c(t_1)) = l_{t_1} \circ S_d^{\,n_{t_1}}(c),
\]
where $c$ is a critical point of $S_d$. Since all critical points of $S_d$ are preperiodic, this implies
\[
a(t_0) = l_{t_0} \circ S_d^n(c)
\]
for infinitely many $t_0 \in \A^1$, contradicting assumption (2).

\textbf{Case 2:} $f_t$ is non-special. Then there exists a primitive polynomial $g_t$ sharing the same Julia set as $f_t$ over $\overline{\C(t)}$ such that
\[
f_t = \sigma \circ g_t^k
\]
for some $k \in \N$ and $\sigma \in \Aut(\P^1)$ preserving $J(f_t)$ \cite{SS95}. 
Since $f_t$ and $g_t$ share the same preperiodic points, there exist infinitely many $t_1$ such that $a(t_1)$ and $c(t_1)$ are preperiodic under $g_{t_1}$. 
By \cite[Theorem D]{FG22}, there exist $\sigma_1 \in \Aut(\P^1)$ preserving $J(g_t)$ and integers $m,n > 0$ such that
\[
\sigma_1 \circ g_t^m(c(t)) = g_t^n(a(t)).
\]
Applying $\sigma' \circ g_t^{k'}$ on both sides of the equation above with sufficiently large $k'$ and some $\sigma' \in \Aut(\P^1)$ preserving $J(g_t)$ yields
\[
\sigma_2 \circ g_t^{l_1}(c(t)) = f_t^{\,l_2}(a(t)),
\]
with $l_1, l_2 > k$ and $\sigma_2 \in \Aut(\P^1)$ preserving $J(g_t)$. 
Since
\[
g_t^{l_1} = g_t^{l_1 - k} \circ \sigma^{-1} \circ f_t,
\]
$c(t)$ is a marked critical point of $g_t^{\,l_1}$. Let $h_t = \sigma_2 \circ g_t^{\,l_1}$, then $c(t)$ is also a marked critical point of $h_t$, contradicting assumption (2).

Hence, each $a(t) \in \mathcal{X}_t \cap H_\infty$ lies in a superattracting periodic orbit for only finitely many $t_1$. 
Since there are infinitely many $t_2 \in \A^1$ such that $\mathcal{X}_{t_2}$ is periodic under $F_{t_2}$, every $a(t_2) \in \mathcal{X}_{t_2} \cap H_\infty$ is periodic under $f_{t_2}$. 

By selecting an infinite subset of such $t_2$, we may assume each $a(t_3) \in \mathcal{X}_{t_3} \cap H_\infty$ is non-superattracting periodic. 
Then, by Lemma \ref{lem: transvers-non-super-E},
\[
\deg(\mathcal{X}_{t_3}) = |\mathcal{X}_{t_3} \cap H_\infty| = |F_{t_3}^k(\mathcal{X}_{t_3}) \cap H_\infty| = \deg(F_{t_3}^k(\mathcal{X}_{t_3}))
\]
for all $k \in \Z^+$. Since $F_t^k(\mathcal{X}_t)$ and $\mathcal{X}_t$ both form an algebraic family of curves in $\P^2$, it follows that for a generic $t_0 \in \P^1$,
\[
\deg(F_{t_0}^k(\mathcal{X}_{t_0})) = \deg(\mathcal{X}_{t_0}).
\]
\end{proof}

The following conjecture is a special case of the conjecture stated explicitly in \cite[Conjecture 6.1]{De16}:
\begin{conj}\label{Conj: De3-Conj-6.1}
    Let $f_t(x)$ be a non-isotrivial algebraic family of rational functions of degree $>1$ defined over $\C$ parametrized by $t$ varies in $\A^1_\C$ and $a(t)$, $b(t)$ are two marked points defined over $\overline{\C(t)}$. Suppose there are infinitely many $t_0 \in \A^1$ such that $a(t_0), b(t_0) \in \Prep(f_{t_0})$. Then $$\overline{\Orb_{(f_t,f_t)}((a(t), b(t)))} \subseteq (\P^1 \times \P^1)(\overline{\C(t)})$$ is proper Zariski closed.
\end{conj}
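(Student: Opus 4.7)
The plan is to attack the conjecture through the standard pipeline of arithmetic dynamics: canonical heights, bifurcation currents, and rigidity of marked pairs. First, after spreading out to a suitable arithmetic base and letting $\hat h_{f_t}$ denote the Call--Silverman canonical height on $\P^1$ attached to the family $f_t$ over $\overline{\C(t)}$, the assumption that infinitely many specializations $t_0 \in \A^1$ yield $a(t_0), b(t_0) \in \Prep(f_{t_0})$, together with non-isotriviality, should force $\hat h_{f_t}(a(t)) = \hat h_{f_t}(b(t)) = 0$ via the specialization theorem of Call--Silverman.

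Next, I would translate this vanishing into geometric data in the style of DeMarco. Each vanishing is equivalent to the marked pair $(f_t, a(t))$ (respectively $(f_t, b(t))$) being stable on all of $\A^1$, which in turn is equivalent to the bifurcation current $T_{f,a}$ (respectively $T_{f,b}$) being identically zero. Applying Yuan--Zhang arithmetic equidistribution to the pair $(a(t), b(t))$, viewed as a single point of $\P^1 \times \P^1$ over $\overline{\C(t)}$, one should then obtain equality of the relevant dynamical measures at every place of the function field $\C(t)$, both archimedean and non-archimedean. This gives a strong constraint linking the two marked points, but it is still only a measure-theoretic statement.

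The final step is a rigidity argument promoting this measure-theoretic coincidence to a genuine algebraic relation. For non-special $f_t$, the classification of semiconjugate pairs due to Pakovich \cite{Pa23}, together with Favre--Gauthier type results (as already invoked in the proof of Theorem \ref{thm: line-at-infinty-poly}), identifies precisely when two marked points produce the same bifurcation data: up to passing to iterates, $b(t)$ must lie on a curve in $\P^1 \times \P^1$ that is invariant under $(f_t^N, f_t^N)$ for some $N$, or must be related to $a(t)$ via a symmetry of the Julia set of $f_t$. Either outcome yields an algebraic relation between $f_t^n(a(t))$ and $f_t^m(b(t))$ over $\overline{\C(t)}$, and this relation cuts out a proper closed subvariety of $\P^1 \times \P^1$ containing the entire orbit $\Orb_{(f_t, f_t)}((a(t), b(t)))$, as desired.

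The hard part will be the rigidity step. Passing from equality of bifurcation measures to an actual algebraic relation requires ruling out all exotic measure coincidences that do not arise from an obvious semiconjugacy, and one must handle the special cases (Latt\`es, power, Chebyshev) separately, where extra symmetries of the Julia set complicate the picture considerably. This is the core difficulty that keeps \cite[Conjecture 6.1]{De16} open in general, and any complete proof would either require a full classification of common preperiodic portraits for pairs of orbits, or a substantial strengthening of the presently available rigidity theorems for marked families.
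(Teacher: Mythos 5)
This statement is \cite[Conjecture~6.1]{De16}, which the paper does \emph{not} prove. It is quoted verbatim as an open conjecture and then invoked purely as a hypothesis: Theorem~\ref{thm: line-at-infinity-algebraic} is explicitly a \emph{conditional} result that begins ``Assume Conjecture~\ref{Conj: De3-Conj-6.1}.'' There is therefore no ``paper's own proof'' to compare your proposal against, and any attempt to grade the proposal as if there were one would be misguided.

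Taking your proposal on its own terms: you correctly identify the standard pipeline (Call--Silverman specialization forcing $\hat h_{f_t}(a(t)) = \hat h_{f_t}(b(t)) = 0$; stability/bifurcation-current reformulation; Yuan--Zhang equidistribution giving measure-theoretic coincidence; then a rigidity step to upgrade to an algebraic relation). You also correctly flag the genuine gap: the last rigidity step, namely promoting equality of bifurcation measures at all places to an actual $(f_t^N, f_t^N)$-invariant curve containing the orbit, is not known in general. The known unlikely-intersection results (Favre--Gauthier \cite{FG22}, DeMarco--Mavraki, etc.) handle cases such as polynomial families or split families on $(\P^1)^2$, but the full rational, non-isotrivial, single-family statement here is open. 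Your proposal is therefore an honest sketch of the intended strategy, not a proof; you should not present it as resolving the conjecture, and the paper itself is careful to leave it conditional for exactly the reason you name.
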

Assuming this Conjecture, we can strength Theorem \ref{thm: line-at-infinty-poly}:

\begin{thm}\label{thm: line-at-infinity-algebraic}
    Assume Conjecture~\ref{Conj: De3-Conj-6.1}. 
    Let $F_t$ be a family of regular polynomial endomorphisms on $\P^2$ defined over $\C$ of degree greater than $1$, parametrized by $t \in \A^1$, and let $\mathcal{X}_t$ be a family of curves in $\P^2$ defined over $\C$. 
    Suppose there are infinitely many $t_0 \in \P^1$ such that $\mathcal{X}_{t_0}$ is periodic under $F_{t_0}$. 
    Denote by $f_t \coloneqq F_t|_{H_\infty}$ the restriction of $F_t$ to the line at infinity. 
    Then, for a generic set of parameters $t_0 \in \A^1_\C$, we have 
    \[
        \deg(\mathcal{X}_{t_0}) = \deg(F_{t_0}^k(\mathcal{X}_{t_0}))
    \]
    for all positive integers $k$, unless there exist a point $a_i(t) \in \mathcal{X}_t \cap H_\infty$, a pair of integers $n,m \ge 0$, a rational map $X_t : \P^1 \to \P^1$, and two rational functions $g_t,h_t \in \overline{\C(t)}[x]$ such that 
    \begin{enumerate}
        \item $f_t \circ X_t = X_t \circ g_t$;
        \item $g_t$ commutes with $h_t$ and they share a common iterate;
        \item $f_t^n(a_i(t)) = X_t \circ h_t^m(c(t))$, where $c(t)$ is a marked critical point of $h_t^m$.
    \end{enumerate}
\end{thm}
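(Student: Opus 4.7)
The plan is to mirror the proof of Theorem~\ref{thm: line-at-infinty-poly}: the essential reduction is to show that, outside of the exceptional case described by (1)--(3), each intersection point $a_i(t) \in \mathcal{X}_t \cap H_\infty$ lies in a superattracting periodic cycle of $f_{t_1}$ for only finitely many $t_1 \in \A^1$. Once this is secured, I would restrict to an infinite subset of parameters at which $\mathcal{X}_{t_3}$ is periodic and every point of $\mathcal{X}_{t_3} \cap H_\infty$ is non-superattracting periodic; Lemma~\ref{lem: transvers-non-super-E} then forces each intersection to be transverse and yields
\[
\deg(\mathcal{X}_{t_3}) = |\mathcal{X}_{t_3} \cap H_\infty| = |F_{t_3}^k(\mathcal{X}_{t_3}) \cap H_\infty| = \deg(F_{t_3}^k(\mathcal{X}_{t_3})),
\]
and this equality extends to a generic $t_0 \in \A^1$ since all quantities vary algebraically in $t$.

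To carry out the ruling-out step, suppose for contradiction that some $a_i(t)$ is superattracting periodic under $f_{t_1}$ for infinitely many $t_1 \in \A^1$. Since $f_t$ has only finitely many marked critical points, the pigeonhole principle produces a single marked critical point $c(t)$ of $f_t$ such that $c(t_1) \in \Per(f_{t_1})$ and $f_{t_1}^{n_{t_1}}(c(t_1)) = a_i(t_1)$ for infinitely many $t_1$. In particular both $c(t)$ and $a_i(t)$ specialize to preperiodic points of $f_{t_1}$ for infinitely many $t_1$, so Conjecture~\ref{Conj: De3-Conj-6.1} applied to the pair $(c(t), a_i(t))$ forces the orbit closure
\[
V \coloneqq \overline{\Orb_{(f_t,f_t)}\bigl((c(t), a_i(t))\bigr)} \subseteq \P^1 \times \P^1
\]
to be a proper Zariski closed subset. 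Passing to an iterate $(f_t^N, f_t^N)$ and to an irreducible component, I would extract an irreducible curve $C_t \subset \P^1 \times \P^1$ that is invariant under $(f_t^N, f_t^N)$ and contains a suitable forward image of the base point $(c(t), a_i(t))$.

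The next step is to apply the classification of semiconjugate pairs due to Pakovich~\cite{Pa23}: any such irreducible invariant curve in $\P^1 \times \P^1$ arises from a pair of commuting polynomials $(g_t, h_t)$ sharing a common iterate, semiconjugated to $f_t$ by a rational map $X_t : \P^1 \to \P^1$ satisfying $f_t \circ X_t = X_t \circ g_t$; the curve $C_t$ is then parametrized, up to iterate, by $u \mapsto (X_t(u), X_t(h_t^m(u)))$ for some $m$. Specializing at the base point and using that $c(t)$ is a critical point of $f_t$, one extracts the relation $f_t^n(a_i(t)) = X_t(h_t^m(c(t)))$, with $c(t)$ playing the role of a marked critical point of $h_t^m$, its criticality being transported through $X_t$ via the semiconjugacy. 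This is exactly conditions (1)--(3) of the exceptional case, providing the desired contradiction.

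The principal obstacle is the delicate translation of the invariant curve $C_t$ into the polynomial data $(X_t, g_t, h_t)$ realizing the precise relation demanded in (1)--(3); in particular one must verify that the critical point $c(t)$ of $f_t$ transfers to a marked critical point of the polynomial iterate $h_t^m$, and one must absorb the iterate $f_t^N$ back into $f_t$ so that the statement refers to $f_t$ itself. A secondary complication is handling the exceptional cases of Pakovich's classification (Chebyshev, power, and Latt\`es-like pairs), in analogy with the Case~1/Case~2 dichotomy appearing in the proof of Theorem~\ref{thm: line-at-infinty-poly}.
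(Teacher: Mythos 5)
Your proposal follows the same overall route as the paper: reduce to ruling out superattracting specializations of $a_i(t)$, apply Conjecture~\ref{Conj: De3-Conj-6.1} to the pair $(a_i(t),c(t))$ to obtain a proper $(f_t,f_t)$-invariant curve $V$ in $\P^1\times\P^1$, and then invoke Pakovich's classification~\cite{Pa23} to produce the $(X_t,g_t,h_t)$ data. However, there are genuine gaps in the translation step, which you yourself flag as an ``obstacle'' but do not resolve.

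First, your account of what Pakovich's classification provides is not accurate. Theorem~4.15 of~\cite{Pa23} yields $X_t,g_t$ with $f_t\circ X_t = X_t\circ g_t$ and $V=(X_t,X_t)(W)$ for a $(g_t,g_t)$-invariant curve $W$; Theorem~1.2 then parametrizes $W$ by $u\mapsto (U_1(u),U_2(u))$ for rational maps $U_1,U_2$ commuting with $g_t$ (satisfying $U_i\circ V_i=g_t^l$), not by $u\mapsto (X_t(u),X_t(h_t^m(u)))$ as you write. Second, and more importantly, the marked critical point appearing in condition~(3) of the theorem is \emph{not} the original critical point $c(t)$ of $f_t$; in the paper's proof it is the parameter $u(t)\in\P^1_{\overline{\C(t)}}$ with $a(t)=X_t\circ U_1(u(t))$, $c(t)=X_t\circ U_2(u(t))$. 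The paper derives criticality of $u(t)$ via a nontrivial chain: $(f_t^n)'|_{c(t)}\equiv 0$ forces $u(t)$ to be a marked critical point of $X_t\circ U_2\circ g_t^n$; since $u(t)\notin\Prep(g_t)$ (else $c(t)$ would be passive, contradicting activity) and $\operatorname{Crit}(X_t\circ U_2)$ is finite, one can choose $s$ so that $u(t)$ is a marked critical point of $g_t^s$; finally Ritt's theorem is used to pass from $g_t^s$ to $U_1^m$ (after possibly enlarging $s$) because $U_1$ commutes with $g_t$ and, being non-special and non-generalized-Latt\`es, shares a common iterate with it. You name this chain as an obstacle without supplying it. Third, you do not handle the case where $V$ is vertical or horizontal (one of the two marked points passive), which the paper treats separately with $X_t=\mathrm{id}$, $g_t=h_t=f_t$, nor do you treat the Latt\`es locus, where the argument is short-circuited because Latt\`es maps have no superattracting cycles at all. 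These omissions are exactly where the precise ``unless'' alternative~(1)--(3) comes from, so the argument as proposed does not yet establish the stated conclusion.
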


\begin{proof}
    Suppose that for every $a(t) \in \mathcal{X}_t \cap H_\infty$, there exist only finitely many parameters $t_1 \in \A^1$ such that $a(t_1)$ lies in a superattracting periodic cycle of $f_{t_1}$. 
    Then, by the same argument as in the proof of Theorem~\ref{thm: line-at-infinty-poly}, we conclude that 
    \[
        \deg(F_{t_0}^k(\mathcal{X}_{t_0})) = \deg(\mathcal{X}_{t_0})
    \]
    for a generic set of $t_0 \in \A^1$ and all $k > 0$.

    Now suppose that $f_t$ lies entirely in the Latt\`es locus. 
    Since all periodic points of a Latt\`es map are repelling, it follows that for any $t_0 \in \A^1$ such that $\mathcal{X}_{t_0}$ is periodic under $F_{t_0}$, every $a(t_0) \in \mathcal{X}_{t_0} \cap H_\infty$ is a repelling periodic point. 
    Hence this case is complete.

    Assume next that $f_t$ is not contained in the Latt\`es locus. 
    Suppose there exist infinitely many parameters $t_1 \in \A^1$ and some $a(t) \in \mathcal{X}_t \cap H_\infty$ such that $a(t_1)$ is a superattracting periodic point of $f_{t_1}$. 
    By the pigeonhole principle, there exists a marked critical point $c(t)$ of $f_t$ such that 
    \[
        \{\, t_1 \in \A^1 : c(t_1) \in \Per(f_{t_1}),\ a(t_1) = f_{t_1}^{n_{t_1}}(c(t_1)),\ n_{t_1} \in \N \,\}
    \]
    is an infinite set. 
    Then, Conjecture~\ref{Conj: De3-Conj-6.1} implies that there exists an invariant curve 
    $V \subseteq \P^1 \times \P^1$ under $(f_t,f_t)$ such that $\Orb_{(f_t,f_t)}(a(t),c(t)) \subseteq V$.

    If $V$ is vertical or horizontal, then one of $a(t)$ or $c(t)$ is passive. 
    Suppose $c(t)$ is passive. 
    Then there exists $m \in \N$ such that $f_t^m(c(t)) = c(t)$. 
    Since $a(t_1)$ and $c(t_1)$ lie in the same periodic cycle for infinitely many $t_1$, we must also have $f_t^m(a(t)) = a(t)$ and $f_t^n(a(t)) = c(t)$ for some $n \in \N$. 
    Taking $X_t$ to be the identity map and $g_t = h_t = f_t$, the statement follows. 
    The same reasoning applies when $a(t)$ is passive.

    Hence, we may assume both $a(t)$ and $c(t)$ are active, so $V$ projects dominantly onto both coordinates. 
    If $f_t$ is conjugate to a power map or a Chebyshev map, then, as in the proof of Theorem~\ref{thm: line-at-infinty-poly}, we again have $a(t) = f_t^m(c(t))$ for some $m \ge 0$, and the theorem holds.

    Now assume that $f_t$ is non-special. 
    By \cite[Theorem~4.15]{Pa23}, there exist rational maps $X_t ,g_t \in \overline{\C(t)}(x)$, where $g_t$ is not a generalized Latt\`es map (in the sense of \cite{Pa20}), such that
    \[
        f_t \circ X_t = X_t \circ g_t,
    \]
    and an invariant curve $W$ under $(g_t,g_t)$ satisfying 
    \[
        V = (X_t,X_t)(W),
    \]
    after replacing $f_t$ and $g_t$ by suitable iterates if necessary. 
    Then, by \cite[Theorem~1.2]{Pa23}, there exist rational maps $U_1, U_2, V_1, V_2$ commuting with $g_t$ such that 
    \[
        U_1 \circ V_1 = V_1 \circ U_1 = g_t^l, \quad 
        U_2 \circ V_2 = V_2 \circ U_2 = g_t^l
    \]
    for some positive integer $l$, and $W$ is parametrized by $u \mapsto (U_1(u), U_2(u))$. 
    Consequently, there exists $u(t) \in \P^1_{\overline{\C(t)}}$ such that
    \[
        a(t) = X_t \circ U_1(u(t)), \quad 
        c(t) = X_t \circ U_2(u(t)).
    \]

    For any $n > 0$, we have 
    \[
        f_t^n(c(t)) = f_t^n \circ X_t \circ U_2(u(t)) = X_t \circ U_2 \circ g_t^n(u(t)).
    \]
    Since $(f_t^n)'|_{c(t)} \equiv 0$, by the chain rule $u(t)$ is a marked critical point of $X_t \circ U_2 \circ g_t^n$. 
    If $u(t)$ were preperiodic under $g_t$, then $c(t)$ would be preperiodic under $f_t$, contradicting the assumption that $c(t)$ is active. 
    Hence $u(t) \notin \Prep(g_t)$. 
    Because the set of critical points of $X_t \circ U_2$ is finite, there exists $s \in \N$ such that $u(t)$ is a marked critical point of $g_t^s$. 
    That is, choosing $s$ so that $g_t^s(u(t))$ avoids $\operatorname{Crit}(X_t \circ U_2)$ and $(X_t \circ U_2 \circ g_t^s)'|_{u(t)} = 0$, we obtain
    \[
        f_t^s(a(t)) = X_t \circ U_1 \circ g_t^s(u(t)),
    \]
    with $u(t)$ a marked critical point of $g^s_t$.

    Finally, by \cite{Ri23}, unless $g_t$ is conjugate to a power map, a Chebyshev polynomial, or a Latt\`es map (in particular unless $g_t$ is generalized Latt\`es), every rational function commuting with $g_t$ shares a common iterate with $g_t$. 
    Thus, $U_1$ shares a common iterate with $g_t$. 
    Enlarging $s$ if necessary so that $U_1^{m-1} = g_t^s$ for some $m > 0$, we conclude that $u(t)$ is a marked critical point of $U_1^m$ and
    \[
        f_t^s(a(t)) = X_t \circ U_1^m(u(t)).
    \]
    This completes the proof.
\end{proof}


 \begin{rmk}
     There is in fact additional information about $X_t$ in Theorem~\ref{thm: line-at-infinity-algebraic}. 
After introducing an appropriate orbifold structure on $\mathbb{P}^1$, 
the map $X_t$ becomes a Galois covering of $\mathbb{P}^1$. 
We omit a detailed discussion of this aspect here and refer the reader to \cite{Pa23} 
for a comprehensive treatment of orbifold structures and the associated covering map notation.
 \end{rmk}
 Notice that if we restrict to a constant family of regular polynomial endomorphisms, then Theorem \ref{thm: main-1} proved this degree stabilization statement under the assumption that $\mathcal{X}_s$ is a periodic curve under $F$ for a dense subset of $s \in S$.
 The following example shows that, in general, a family of curves can easily have degrees blow up under iteration by a family of regular polynomial endomorphisms. 
 \begin{example}
     Let $L_s = V(y - sx)$ be a family of lines parametrized by $s \in \C$ and let $\Phi : \A^1 \times \P^2 \to \A^1 \times \P^2$ be an endomorphism given by 
     $$ \Phi(s,p) =(s, F_s(p)) ,$$
     for $s \in \A^1$ and $p \in \P^2$, where for a fixed $\A^2$ chart we have $$F_s(x,y) = (x^2, y^2 + sx).$$

     Then we obviously have that 
     $$ F^n_s(L_s) \cap \A^2  = \{(u^{2^n} , s^{2^n}u^{2^n} + 2^{n-1}s^{2^{n}-1}u^{2^n -1} + o(u^{2^n -1})) : u \in \C\}.$$

     For any $s \in \C^*$, suppose there exists a polynomial $P_n(x,y) \in \C[x,y]$ so that $$V(P_n(x,y)) = \overline{F^n_s(L_s) \cap \A^2},$$ for a $n \in \N$. Then, after changing the coordinate by letting $z = x$ and $w = y - s^{2^n }x$, we have 
     $$\deg(P) \geq 2^n,$$
     since $\deg_u(z) = 2^n$, $\deg_u(w) = 2^n -1$ and 
     $$ P_n(z(u), w(u)) = 0$$
     for all $u \in \C$.
     Therefore, for any $s \in \C^*$, we have 
     $$ \deg(F^n_s(L_s)) \geq 2^n > 1 = \deg(L_s)$$
     for any positive integer $n$.
 \end{example}

\section{Endomorphisms with Infinitely Many Periodic Curves of Bounded Degree}\label{sect: classification}

In this section, let $F(x,y)$ denote a regular polynomial endomorphism of degree greater than $1$. 
We apply Theorem~\ref{thm: main-1} to classify regular polynomial endomorphisms that admit infinitely many periodic curves of bounded degree.

We adopt the notion of \emph{$k$-webs} from \cite{FP15}, and for the reader’s convenience we recall their definition following the same reference.

A \emph{$k$-web} $\mathcal{W}$ on a smooth complex surface $X$ is determined by a section 
\[
    \omega \in H^0\bigl(X, \Sym^k \Omega_X^1 \otimes \mathcal{N}_{\mathcal{W}}\bigr)
\]
for some line bundle $\mathcal{N}_{\mathcal{W}}$ on $X$, satisfying the following conditions:
\begin{enumerate}
    \item the zero set of $\omega$ has codimension~$2$ or is empty;
    \item for any point $p$ outside a proper hypersurface of $X$, after trivializing the line bundle $\mathcal{N}_{\mathcal{W}}$, the $k$-symmetric $1$-form $\omega(p) \in \Sym^k \Omega^1_{X,p}$ can be written as the product of $k$ pairwise distinct linear forms.
\end{enumerate}
Two sections $\omega$ and $\omega'$ determine the same $k$-web if and only if they differ by multiplication by a global nowhere-vanishing holomorphic function.

Intuitively, a $k$-web $\mathcal{W}$ on $X$ can be viewed as a multi-foliation consisting of $k$ distinct local leaves through a generic point of $X$. 
At a generic point, $\mathcal{W}$ is the superposition of $k$ pairwise transverse foliations $\mathcal{F}_1, \dots, \mathcal{F}_k$, and we may write
\[
    \mathcal{W} = \mathcal{F}_1 \boxtimes \cdots \boxtimes \mathcal{F}_k.
\]
An endomorphism $F$ of $X$ is said to \emph{preserve} a $k$-web $\mathcal{W}$ if it maps each local leaf of $\mathcal{W}$ to a leaf of $\mathcal{W}$.

We now show that if $F$ admits infinitely many periodic curves, then either there exists a preserved $k$-web with $k > 1$, or $F$ preserves a pencil of curves.

\begin{prop}\label{prop: infinite-per-curve-imply-k-web}
    Suppose that \( F \) admits an infinite set of irreducible periodic curves of degree bounded by some \( D \in \Z^+ \). Then either \( F \) preserves a \( k \)-web with \( k > 1 \), or \( F \) preserves a pencil of curves.
\end{prop}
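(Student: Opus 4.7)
The plan is to reduce to a one-parameter $F$-invariant family of curves and then extract a $k$-web from it. By the pigeonhole principle applied to the bounded-degree condition, I may pass to an infinite subset $\mathcal{C}'$ of the given periodic curves having a common degree $d \leq D$. Applying Proposition~\ref{prop: 1-D-cases-summary} to $\mathcal{C}'$, and after replacing $F$ by a suitable iterate, I obtain an irreducible one-dimensional subvariety $Z \subseteq \mathcal{M}_D$ parametrized by a finite morphism $\tau : \P^1 \to Z$, containing a Zariski-dense set of periodic curves, and satisfying $[F(C)] \in Z$ for every curve $C$ with $[C] \in Z$.

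Next, I would form the incidence variety
\[
I = \{(p, z) \in \P^2 \times Z : p \in \Supp(z)\},
\]
which is irreducible of dimension $2$, since each fiber of the projection $I \to Z$ is a curve in $\P^2$. The projection $\pi_2 : I \to \P^2$ is dominant: otherwise its image would be contained in finitely many curves of $\P^2$, contradicting the fact that $Z$ parametrizes infinitely many distinct curves. By dimension count, $\pi_2$ is generically finite of some degree $k \geq 1$, so through a generic point $p \in \P^2$ there pass exactly $k$ distinct curves $C_1, \ldots, C_k$ from $Z$, with pairwise distinct tangent directions at $p$ (by a standard Bertini-type argument).

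I would then build the $k$-web $\mathcal{W}$ by assigning to each generic $p$ the symmetric product $\ell_1 \cdots \ell_k \in \Sym^k T_p^* \P^2$, where $\ell_i$ is a linear form vanishing on $T_p C_i$. Algebraicity of $I$ and of the tangent-direction map $(p, z) \mapsto T_p \Supp(z)$ upgrades this pointwise assignment to a rational section of $\Sym^k \Omega^1_{\P^2}$, and twisting by a suitable line bundle $\mathcal{N}_\mathcal{W}$ to absorb excess divisorial zeros produces a global section with codimension-$2$ vanishing, giving the desired $k$-web. The $F$-stability of $Z$ then yields $F$-invariance of $\mathcal{W}$: at a generic $p$, the images $F(C_1), \ldots, F(C_k)$ lie in $Z$, pass through $F(p)$, and by matching counts exhaust the $k$ curves of $Z$ through $F(p)$; since $dF_p(T_p C_i) = T_{F(p)} F(C_i)$, this shows $F^* \mathcal{W} = \mathcal{W}$ generically, hence everywhere. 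If $k = 1$ then $\mathcal{W}$ is a $1$-web, precisely a pencil of curves, while if $k > 1$ it is a genuine $k$-web. The main technical obstacle is the precise construction of $\mathcal{N}_\mathcal{W}$ and the verification of the codimension-$2$ zero-locus condition; this requires careful local analysis along the ramification of $\pi_2$ to ensure that spurious divisorial components in the naive local section are removed.
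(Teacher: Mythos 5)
Your overall architecture matches the paper's: pass to the one-parameter family $Z$ via Proposition~\ref{prop: 1-D-cases-summary}, consider how many curves of $Z$ pass through a generic point, and build a $k$-web from the tangent data. However, there is a genuine gap at the central step.

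The claim that the $k$ curves of $Z$ through a generic $p$ have \emph{pairwise distinct tangent directions} ``by a standard Bertini-type argument'' is not justified and is in fact the hardest part of the proof. Bertini-type theorems control generic smoothness or transversality for \emph{linear} systems (or for intersections with generic members of a base-point-free system); here $Z$ is merely a one-dimensional, possibly highly nonlinear, family in $\mathcal{M}_D$, and a priori nothing prevents the $k$ local leaves from being mutually tangent along a divisor, or even everywhere. The paper's proof of this transversality is genuinely dynamical: it supposes generic tangency, packages the tangency condition into a resultant $R$ so that $R \equiv 0$ forces tangency at \emph{every} point, then restricts to $H_\infty$ and invokes Lemma~\ref{lem: transvers-non-super-E} (transversality of the unique invariant branch at a non-superattracting periodic point at infinity), together with the parametrization from Lemma~\ref{lem: one-point-or-web}, to contradict $R \equiv 0$. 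None of this is recoverable from Bertini; it uses the periodicity of the curves and the structure of $F$ at infinity in an essential way. Your proposal identifies the construction of $\mathcal{N}_\mathcal{W}$ and the codimension-two vanishing as ``the main technical obstacle,'' but the paper disposes of that quickly (by dividing out the divisorial zeros $Q_i^{m_i}$); the real obstacle is exactly the distinct-tangent-directions statement you treat as routine.

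A secondary issue: you assert that when $k=1$ the $1$-web is ``precisely a pencil of curves.'' A $1$-web is just a foliation; to land in the class covered by Theorem~\ref{thm: preserving-pencil-curves} you need the family to be a \emph{linear} pencil, and this is not immediate from $k=1$. The paper proves it via a separate Bézout argument in the case when the common intersection locus $\mathcal{S}$ is finite (fix two generic $C_1,C_2 \in Z$, show every generic $C_3 \in Z$ equals $\lambda_0 C_1 + \mu_0 C_2$ by comparing local intersection numbers). Your proposal would need to supply this step, or some substitute, rather than asserting the pencil structure.
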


\begin{proof}
    Without loss of generality, we may assume that \( F \) admits an infinite set of irreducible periodic curves all of degree exactly \( D \). 
    By Proposition~\ref{prop: 1-D-cases-summary}, there exists an irreducible subvariety 
    \( Z \subseteq \mathcal{M}_D \) parameterized by a morphism \( \tau : \P^1 \to Z \), 
    where \( Z \) is the closure in \( \mathcal{M}_D \) of an infinite subset of irreducible periodic curves of degree \( D \).
    Moreover, for any curve \( C \) with \( [C] \in Z \), we have \( [F(C)] \in Z \).
    For convenience, we also view \( \mathcal{M}_D \) as the space of (possibly reducible) curves of degree at most \( D \). 

    Our goal is to construct either a \( k \)-web or a pencil preserved by \( F \).

    We first show that the set
    \[
        \mathcal{S} \coloneqq 
        \{ p \in \P^2 : p \in C_1 \cap C_2,\; C_1 \neq C_2,\; C_1, C_2 \in Z \}
    \]
    is either Zariski dense in \( \P^2 \) or finite.

    Consider the space
    \[
        W = \overline{\{ (C_1, C_2, x) : C_1, C_2 \in Z,\; C_1 \neq C_2,\; x \in C_1 \cap C_2 \}} 
        \subseteq Z \times Z \times \P^2.
    \]
    For any point $x \in \P^2$, let $Z_x \coloneqq \{C \in Z : x \in C\}$. Let $\pi_3 : W \to \P^2$ onto the third factor. We have $\pi^{-1}_{3}(x)$ contains $(Z_x \times Z_x) \setminus \Delta$ which is of dimension $2\dim(Z_x)$. Hence $\dim (\pi_3^{-1}(x))$ is either $0$ or $2$.
    As any two distinct curves in \( Z \) intersect in finitely many points bounded in terms of \( D \),
    we have \( \dim(W) = 2 \dim(Z) = 2 \), since \( \dim(Z) = 1 \).
    Hence,
    \[
        \dim(\overline{\mathcal{S}}) = \dim(\pi_3(W)) \in \{0,2\},
    \]
    and thus \( \mathcal{S} \) is either Zariski dense in \( \P^2 \) or finite.

    Let us first suppose that $\mathcal{S}$ is finite.  
Define 
\[
\mathcal{O} \coloneqq \bigcap_{C \in Z} C.
\]
Since $\mathcal{O} \subseteq \mathcal{S}$, it is finite as well.  
Choose two sufficiently generic curves $C_1, C_2 \in Z$ such that 
\[
C_1 \cap C_2 = \mathcal{O}.
\]
Then, for any sufficiently generic $C_3 \in Z$ (avoiding the points in $\mathcal{S} \setminus \mathcal{O}$), Bézout’s theorem gives
\[
\sum_{p \in \mathcal{O}} (C_3, C_i)_p = D^2,
\]
where $i \in \{1,2\}$ and $D$ is the common degree of the generic curves in $Z$.  
Moreover, for every $p \in \mathcal{O}$ we have
\[
(C_1, C_3)_p = (C_2, C_3)_p,
\]
assuming that $C_1$, $C_2$, and $C_3$ are chosen generically in $Z$.

We now show that each such $C_3$ can be written in the form
\[
\lambda_0 C_1 + \mu_0 C_2
\]
for some $[\lambda_0 : \mu_0] \in \P^1_\C$, so that $C_3$ lies in the pencil of curves generated by $C_1$ and $C_2$.  
Suppose, on the contrary, that $C_3$ is not of this form. Then, for a generic choice of $[\lambda_0 : \mu_0] \in \P^1_\C$, there exists a point $q \notin \mathcal{O}$ such that 
\[
q \in C_3 \cap (\lambda_0 C_1 + \mu_0 C_2).
\]
In that case,
\[
(C_3, \lambda_0 C_1 + \mu_0 C_2)
 = \sum_{p \in \mathcal{O}} (C_3, \lambda_0 C_1 + \mu_0 C_2)_p
   + (C_3, \lambda_0 C_1 + \mu_0 C_2)_q
   > D^2,
\]
since for each $p \in \mathcal{O}$,
\[
(C_3, \lambda_0 C_1 + \mu_0 C_2)_p
   \ge \min\{ (C_3, C_1)_p, (C_3, C_2)_p \}.
\]
This contradicts Bézout’s theorem, which forces
\[
C_3 = \lambda_0 C_1 + \mu_0 C_2.
\]
Hence, $Z$ is a pencil of curves in $\P^2$ preserved by $F$.

    We now  assume $\mathcal{S}$ is Zariski dense and construct a \( k \)-web, with \( k \geq 2 \), using the family of curves in \( Z \).
    For each \( s \in \P^1 \), let \( C_s = V(P_s(x,y,z)) \) denote the corresponding curve,
    where \( P_s(x,y,z) \) is a homogeneous polynomial varying algebraically with \( s \).
    Define a differential form
    \[
        \omega([x:y:z]) \coloneqq 
        \left( \prod_{s : P_s(x,y,z) = 0} dP_s(x,y,z) \right)^{1/e},
    \]
    where \( e = \deg(\tau) \).
    Then \( \omega \) is a global section of 
    \( H^0(\P^2, \Sym^k \Omega_{\P^2}) \),
    where \( k \) is the number of curves in \( Z \) passing through a general point of \( \P^2 \)
    (so that \( ke = |\{ s \in \P^1 : P_s(x,y,z) = 0 \}| \) for general \([x:y:z]\)).

    We will construct from \( \omega \) a section in
    \[
        H^0(\P^2, \Sym^k \Omega_{\P^2} \otimes \mathcal{N}\mathcal{W}),
    \]
    for some line bundle \( \mathcal{N}\mathcal{W} \),
    defining a \( k \)-web preserved by \( F \).

    We first claim that the zero locus of \( \omega \) has codimension at least \( 2 \), unless empty.
    Suppose instead that \( \omega(p) = 0 \) for infinitely many points \( p \in \P^2 \).
    If this set is Zariski dense, then \( \omega \equiv 0 \), which implies that
    \( dP_s(x,y,z) = 0 \) for all \( [x:y:z] \in V(P_s) \) for infinitely many \( s \).
    Hence, each such \( C_s \) would consist entirely of singular points, forcing it to be reducible.
    This contradicts the fact that \( Z \) is generically irreducible,
    being the closure of infinitely many irreducible periodic curves.

    Next, suppose that the zero locus of \( \omega \) contains finitely many irreducible curves
    \( C_i = V(Q_i(x,y,z)) \) for \( 1 \leq i \leq \ell \).
    Then for each \( i \), there exists \( m_i \ge 1 \) such that \( Q_i^{-m_i}\omega \)
    vanishes only at finitely many points of \( C_i \).
    Define
    \[
        \omega' = \omega \prod_{i=1}^{\ell} Q_i^{-m_i},
    \]
    which can be viewed as a section of
    \[
        H^0\!\left(\P^2,\, \Sym^k \Omega_{\P^2} \otimes \mathcal{O}_{\P^2}\!\left(\sum_{i=1}^{\ell} m_i C_i\right)\!\right).
    \]
    Since \( \omega \) encodes the tangent directions of the family of curves \( Z \) preserved by \( F \),
    both \( \omega \) and \( \omega' \) are preserved by \( F \).

    It remains to show that \( \omega' \) can be locally written as a product of \( k \)
    distinct linear factors outside a proper Zariski closed subset of \( \P^2 \).
    Suppose the contrary: there exists a Zariski dense set of points \( p \)
    such that at \( p \) there are distinct curves \( C_s, C_t \in Z \)
    tangent to each other at \( p \).

    Let \( a_i(s) \in \C[s] \) denote the coefficients of \( P_s(x,y,z) \) for \( i = 1, \dots, h \), for some $h \in \N^+$,
    and define
    \[
        H(s,t) = \gcd\bigl\{ a_i(s)a_j(t) - a_j(s)a_i(t) : 1 \le i \ne j \le h \bigr\}.
    \]
    Write
    \[
        \nabla P_s(x,y,z) = (\partial_x P_s, \partial_y P_s, \partial_z P_s), \quad
        (A_{s,t}, B_{s,t}, C_{s,t}) = \nabla P_s \times \nabla P_t,
    \]
    so \( A_{s,t}, B_{s,t}, C_{s,t} \in \C[s,t][x,y,z] \).
    Consider the generalized resultant
    \[
        R([x:y:z]) = \operatorname{Res}_{s,t}\bigl((P_s - P_t)/H(s,t), P_s, A_{s,t}, B_{s,t}, C_{s,t}\bigr),
    \]
    whose zero locus consists of points where two distinct curves in \( Z \)
    are tangent to each other.
    Our assumption implies that \( R(p) = 0 \) for a Zariski dense set of \( p \),
    hence \( R \equiv 0 \).
    Therefore, at every \( p \in \P^2 \), there exist distinct \( C_s, C_t \in Z \)
    tangent to each other at \( p \).

    In particular, this holds for points \( p \in H_\infty \).
    If the intersections of the curves in \( Z \setminus \{H_\infty\} \) with \( H_\infty \)
    form a finite set, we reach a contradiction, since periodic curves are dense in \( Z \)
    but would then meet \( H_\infty \) only finitely often,
    whereas \( R \equiv 0 \) forces every point of \( H_\infty \) to lie on at least two such curves.
    On the other hand, if these intersections are infinite,
    then they occur at infinitely many periodic points of \( F|_{H_\infty} \).
    By Lemma~\ref{lem: one-point-or-web}, \( Z \) can be parameterized by a finite morphism
    \( \tau' : H_\infty \to Z \), implying that there is a non-superattracting periodic point
    of \( F|_{H_\infty} \) such that there is a unique curve in \( Z \setminus \{H_\infty\} \)
    passing through it which is a periodic curve, and the intersection is transverse (Lemma~\ref{lem: transvers-non-super-E}).
    This again contradicts \( R \equiv 0 \), which would require tangency at this point.
\end{proof}

Now we have established that any polynomial endomorphism admitting infinitely many periodic curves of bounded degree must preserve either a \( k \)-web with \( k > 1 \), or a pencil of curves. Consequently, the classification of endomorphisms of \( \P^2 \) that preserve \( k \)-webs, as given in \cite{FP15}, yields a corresponding classification of polynomial endomorphisms that possess infinitely many periodic curves of bounded degree.

We begin by recalling the classification results from \cite{FP15}.

\begin{thm}\label{thm: preserve-3-web}\cite[Theorem C (2)]{FP15}
    Suppose $\phi : \P^2 \to \P^2 $ is a regular polynomial endomorphism extended from $\A^2$ that preserves a $k$-web $\mathcal{W}$. Then there exists a projective toric surface $X$, and a finite group $G$ of automorphisms of $X$, such that the quotient space $X/G$ is isomorphic to $\P^2$. The image of $(\C^*)^2$ in $\P^2$ is a Zariski open subset $U$ which is totally invariant by $\phi$. Moreover, one can find an integer $d \geq 2$, and a collection of complex numbers $\lambda_1, \dots, \lambda_l$ such that the triple $(U, \phi^N, \mathcal{W})$ is the push forward under the natural quotient map of 
    $$ \left( (\C^*)^2, (x^d,y^d), \left( \lambda_1 \frac{dx}{x} + \frac{dy}{y}\right) \boxtimes \cdots \boxtimes \left(\lambda_l \frac{dx}{x} + \frac{dy}{y}\right)\right),$$
    with $N \in \{1,2,3\}$.
\end{thm}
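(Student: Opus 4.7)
Since the statement is cited directly from \cite[Theorem C(2)]{FP15}, here I outline the strategy one would pursue rather than carrying out the full argument. The plan is to decompose $\mathcal{W}$ into its irreducible components as a multi-foliation and then classify the polynomial endomorphisms that can preserve the resulting configuration.

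First I would pass to the algebraic closure of the field of definition and write $\mathcal{W}=\mathcal{F}_1 \boxtimes \cdots \boxtimes \mathcal{F}_l$, where each $\mathcal{F}_i$ is an irreducible (possibly non-algebraic) foliation. Since the $\mathcal{F}_i$ are permuted by $\phi$ and there are only finitely many of them, after replacing $\phi$ by an iterate we may assume each $\mathcal{F}_i$ is individually invariant. The next step would be to invoke the classification of foliations preserved by a noninvertible endomorphism of $\P^2$ from \cite{DJ07} and the surrounding literature: such invariant foliations are highly restricted and, in the polynomial case, are controlled by the dynamics of $\phi|_{H_\infty}$. In particular the tangency locus of $\mathcal{F}_i$ with the invariant curve $H_\infty$ forces the foliation to be of logarithmic type, with first integrals of the form $x^{a}y^{b}$ in suitable affine coordinates.

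With several such invariant logarithmic foliations present simultaneously, the common invariance constraints force the existence of a $2$-dimensional torus action (at least birationally) compatible with $\phi$. The next step is to construct a toric model: I would build $X$ by blowing up or contracting along the orbit structure of $\phi|_{H_\infty}$ and its preimages until the compactification of $(\C^*)^2$ becomes equivariant for a lift $\tilde\phi$ of an iterate of $\phi$. Standard toric geometry then shows that any endomorphism of $(\C^*)^2$ preserving a nontrivial configuration of logarithmic $1$-forms is conjugate to a monomial map; replacing $\tilde\phi$ by a further iterate, we may take this monomial map to be of the form $(x^d,y^d)$. The web $\mathcal{W}$ pulls back to a product of logarithmic $1$-forms $\lambda_i\,dx/x+dy/y$, yielding the claimed description upstairs on $(\C^*)^2$.

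The final step is descent: one exhibits a finite group $G\subseteq \Aut(X)$ whose quotient is $\P^2$, and checks that $\tilde\phi$ descends to an iterate $\phi^N$. Here the bound $N\in\{1,2,3\}$ is the main obstacle, and I expect it to come from a case analysis of how $\phi$ permutes the branches of $\mathcal{W}$ at the toric boundary and from the arithmetic of the deck transformation group of $X\to\P^2$: the group $G$ acts on the finite set of invariant foliations $\{\mathcal{F}_i\}$ and on the components of the boundary divisor, and the possible symmetries of this combinatorial data are very limited, bounding the necessary iterate by the order of the normalizer modulo the centralizer. Carrying out this final combinatorial bookkeeping is the delicate part of the Favre--Pereira argument, and I would rely on it directly rather than attempt to reprove it here.
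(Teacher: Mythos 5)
The paper does not prove this statement: it is imported verbatim as \cite[Theorem~C(2)]{FP15}, exactly as the label indicates. You correctly recognize this and defer to Favre--Pereira, which is all that is required; in that sense your ``proof'' agrees with the paper's treatment, which is to cite the result without reproving it.

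That said, a few details in your sketch would need repair if you actually tried to reconstruct the argument. You cite \cite{DJ07} as the source for the classification of invariant foliations, but that paper classifies endomorphisms preserving a \emph{pencil of curves}, not invariant (possibly non-algebraic) foliations; the relevant classification of singular holomorphic foliations preserved by a non-invertible endomorphism of $\P^2$ is a different body of work, due largely to Favre--Pereira and Cantat--Favre. More importantly, the step ``after replacing $\phi$ by an iterate we may assume each $\mathcal{F}_i$ is individually invariant'' sits in tension with the stated bound $N \in \{1,2,3\}$: if you first pass to an iterate large enough to fix every leaf-component of $\mathcal{W}$ and only then start the toric construction, you have no control over the size of that first iterate, and you cannot recover the sharp bound. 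In the Favre--Pereira argument the small bound on $N$ is extracted before (and independently of) making each foliation invariant, using the rigidity of the toric boundary data, so the order of operations matters. You flag the $N \in \{1,2,3\}$ bound as ``the delicate part,'' which is the right instinct, but the sketch as written would not deliver it. Since the paper itself does not attempt a proof, none of this affects the paper's correctness; it only bears on the plausibility of your reconstruction.
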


\begin{thm}\label{thm: preserve-2-web}\cite[Theorem E]{FP15}
    Let $\phi : \P^2 \to \P^2$ be a holomorphic map of degree at least $2$ preserving a $2$-web $\mathcal{W}$. If $\phi$ does not preserve any other web then the pair $(\mathcal{W}, \phi)$ is one of the following: 
    \begin{enumerate}
        \item $\mathcal{W}$ is the union of two pencil of lines, and in suitable homogeneous coordinates $\phi(x,y) = (P(x), Q(y))$ or $(Q(y), P(x))$ where $P$ and $Q$ are polynomials of the same degree which are not conjugated to monomial maps nor to Chebyshev maps;

        \item $\mathcal{W}$ is the algebraic web dual to a smooth conic and $\phi$ is an Ueda map whose associated map on $\P^1$ is not a finite quotient of an affine map;

        \item $\mathcal{W}$ is the union of the pencil of lines $\{x/y = cst\}$ and the pencil of curves $\{x^py^q = cst\}$, with $p,q \in \N^*$, $\gcd\{p,q\} = 1$ and $\phi(x,y) = (x^d/R(x,y), y^d/R(x,y))$ where $R(x,y) = \prod^l_{i = 1}(1 + c_i x^py^q)$, $l(p+q) \leq d$, $c_i \in \C^*$ and the rational map $\theta \to \theta^d/(\prod_{i = 1}^l (1 + c_i \theta))^{p+q}$ is conjugated neither to a monomial nor to a Chebyshev map;
        \item $\mathcal{W}$ is the union of the pencil of lines $x/y = cst$ and the pencil of curves $xy = cst$, and $\phi(x,y) = (y^d/R(x,y), x^d/ R(x,y))$ where $R(x,y) = \prod^l_{i = 1}(1 + c_ixy)$, $2l \leq d$, $c_i \in \C^*$ and the rational map $\theta \to \theta^d/(\prod^l_{i = 1}(1 + c_i \theta))^2$ is conjugated neither to a monomial nor to a Chebyshev map;
        \item $\mathcal{W}$ is the quotient of the foliation $[dx]$ on $\P^1 \times \P^1$ by the action of $(\Z/2)^2$ generated by $(x,y) \to (-x,-y)$ and $(x,y) \to (y,x)$; and $\phi$ is the quotient of a polynomial endomorphism of $\P^1 \times \P^1$ of the form $(f(x), f(y))$ where $f$ is an odd or even polynomial which is conjugated neither to a monomial nor to a Chebyshev map;
        \item $\mathcal{W}$ is the quotient of the foliation $[dx]$ on $\P^1 \times \P^1$ by the action of $(\Z/2)^3$ generated by $(x,y) \to (y,x)$, $(x,y) \to (x^{-1}, y^{-1})$, and $(x,y) \to (-x,-y)$; and $\phi$ is the quotient of an endomorphism of $\P^1 \times \P^1$ of the form $(f(x), f(y))$ where $f$ is a rational map that commutes with the group generated by $x \to -x$ and $x \to x^{-1}$ which is not a finite quotient of an affine map.
    \end{enumerate}
    
\end{thm}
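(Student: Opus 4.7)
The plan is to leverage the structure theory of $2$-webs on rational surfaces together with the birational classification of surfaces admitting a non-invertible polarized endomorphism, then to carry out a case analysis on the resulting models.

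First, I would associate to the $2$-web $\mathcal{W}$ its discriminant locus $\Delta(\mathcal{W}) \subseteq \P^2$, the curve along which the two local leaves of $\mathcal{W}$ coincide, together with the natural double cover $\pi\colon \tilde{X} \to \P^2$ branched along $\Delta(\mathcal{W})$ on which the pullback of $\mathcal{W}$ splits as the sum of two transverse foliations $\mathcal{F}_1$ and $\mathcal{F}_2$ exchanged by the deck involution $\iota$. The triple $(\tilde{X}, \mathcal{F}_1, \mathcal{F}_2)$ is a birational invariant of $\mathcal{W}$ and controls its geometry. I would then lift $\phi$ to $\tilde{X}$: since $\phi$ preserves $\mathcal{W}$ it sends $\Delta(\mathcal{W})$ into itself, so it lifts to a rational endomorphism $\tilde{\phi}\colon \tilde{X} \dashrightarrow \tilde{X}$; by resolving indeterminacies and possibly postcomposing with $\iota$, one may assume that $\tilde{\phi}$ is a polarized endomorphism preserving each foliation $\mathcal{F}_i$ individually.

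Second, I would classify the possibilities for $\tilde{X}$. Since $\tilde{X}$ admits a non-invertible polarized endomorphism, by the Fujimoto--Nakayama--Zhang classification of such surfaces, $\tilde{X}$ is, up to birational equivalence, either $\P^2$ or $\P^1 \times \P^1$, with abelian and hyperelliptic cases excluded because the transverse pair $(\mathcal{F}_1, \mathcal{F}_2)$ together with the rational quotient $\pi$ forces $\tilde{X}$ to be rational. On each model the pair $(\mathcal{F}_1, \mathcal{F}_2)$ is birationally equivalent either to two pencils of lines on $\P^2$ or to the horizontal and vertical foliations $\{dx = 0\}, \{dy = 0\}$ on $\P^1 \times \P^1$.

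Third, I would carry out the case analysis by combining the quotient structure with the branch locus. If $\tilde{X} = \P^2$ and $(\mathcal{F}_1, \mathcal{F}_2)$ is a pair of pencils of lines with distinct centers, then $\phi$ descends to a skew product $(P(x), Q(y))$ in the corresponding coordinates, yielding Case~(1). If the branch locus is a smooth conic with $\tilde{X}$ the associated del~Pezzo surface equipped with the two rational pencils tangent to the conic, then $\phi$ is Ueda, yielding Case~(2). If $\tilde{X} = \P^1 \times \P^1$ quotiented by $(x,y) \mapsto (y,x)$, possibly after further toric blow-ups encoding the pencils $\{x^p y^q = \mathrm{const}\}$ or $\{xy = \mathrm{const}\}$, one recovers Cases~(3) and~(4). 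If the quotient further involves $(x,y) \mapsto (-x,-y)$ or $(x,y) \mapsto (x^{-1}, y^{-1})$, one lands in Cases~(5) and~(6). In each case, the hypothesis that $\phi$ preserves no other web is used to rule out the degenerate subcases: whenever the one-dimensional restriction of $\phi$ were conjugate to a monomial or Chebyshev map, the extra symmetries of that map would generate a second invariant web, contradicting the hypothesis; excluding these subcases produces precisely the exclusion clauses in the statement.

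The main obstacle is the birational classification of the triples $(\tilde{X}, \mathcal{F}_1, \mathcal{F}_2)$ admitting a non-invertible polarized endomorphism preserving both foliations, and the subsequent explicit determination of the descending endomorphism $\phi$ in each model; each case requires an independent coordinate computation together with a careful argument to pin down the extra invariants of the web and to enforce minimality (no additional preserved web), making the proof an intricate case-by-case classification in the spirit of the foliation work of Brunella and the web-theoretic methods of H\'enaut--Pereira--Pirio.
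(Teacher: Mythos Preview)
The paper does not prove this theorem at all: it is quoted verbatim as \cite[Theorem~E]{FP15} and used as a black-box classification input in Section~\ref{sect: classification}. There is therefore no ``paper's own proof'' to compare your proposal against; the author simply cites Favre--Pereira and moves on.

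That said, your sketch is broadly in the spirit of the actual argument in \cite{FP15}: one does pass to a covering surface on which the $2$-web decomposes into two foliations, and one does classify the resulting pairs (surface, bifoliation) admitting a non-invertible endomorphism. But several of your structural claims are too optimistic or not quite right. The lift $\tilde{\phi}$ need not be a polarized \emph{morphism} after a single resolution; controlling indeterminacy and showing one can take a smooth model with an everywhere-defined endomorphism is delicate and is in fact one of the main technical points in \cite{FP15}. Your invocation of the Fujimoto--Nakayama--Zhang classification presupposes exactly this regularity. Also, the dichotomy ``$\tilde{X}$ is $\P^2$ or $\P^1\times\P^1$'' is not the operative one: the relevant models in \cite{FP15} are all quotients of $\P^1\times\P^1$ by finite groups (including the Ueda case~(2), which you describe via a del~Pezzo model instead), and cases~(3)--(4) are not obtained by ``toric blow-ups encoding the pencils'' but arise directly from the form of the endomorphism preserving two specific pencils on $\P^2$. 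Finally, the exclusion of extra invariant webs is more subtle than ``monomial/Chebyshev restrictions generate a second web'': one has to check in each case exactly which additional symmetries of the one-dimensional dynamics lift to web symmetries on the surface, and this is done case by case in \cite{FP15}. So your outline captures the shape of the argument but underestimates the work needed at each step.
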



Together, these two theorems provide a complete classification of regular polynomial endomorphisms that preserve a \( k \)-web with \( k \ge 2 \). The following theorem from \cite{DJ07} gives the classification of endomorphisms of \( \P^2 \) that preserve a pencil of curves.

\begin{thm}\label{thm: preserving-pencil-curves}\cite{DJ07}
    There are two types of irreducible pencils $\mathcal{P}$ invariant under an endomorphism $F$ of $\P^2$.
    \begin{enumerate}
        \item $\mathcal{P}$ is an elementary pencil, that is, the pencil of lines through a point. Under a suitable homogeneous coordinates on $\P^2$, $F$ takes the form $F[x:y:z] = [P(x,y) : Q(x,y):R(x,y,z)]$.
        \item $\mathcal{P}$ is a binomial pencil. Under a suitable homogeneous coordinates on $\P^2$, $F$ takes the form $F[x:y:z] = [x^d:y^d: R]$ or $F[x:y:z] = [y^d:x^d:R]$ for $R(x,y,z) = z^{d - kl} \prod^l_{i = 1}(z^k + c_ix^hy^{k-h})$, where $k \geq 2$, $0 \leq l \leq d/k$, $c_i \in \C^*$, $0 < h< k$ and $\gcd(h,k) = 1$. 
    \end{enumerate}
\end{thm}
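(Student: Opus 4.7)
The plan is to translate preservation of the pencil into a semiconjugacy on $\P^1$ and then to classify by the structure of the base locus.

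First I would choose homogeneous polynomials $P_0, P_1 \in \C[x,y,z]$ of common degree $e$ generating $\mathcal{P}$, giving the rational map $\pi = [P_0:P_1]: \P^2 \dashrightarrow \P^1$ whose general fiber is a member of $\mathcal{P}$. The $F$-invariance yields a morphism $f: \P^1 \to \P^1$ with $\pi \circ F = f \circ \pi$; equivalently there exist homogeneous polynomials $G_0, G_1$ of common degree $\deg(f)$ such that $P_i \circ F = G_i(P_0, P_1)$ for $i = 0,1$. Let $B \coloneqq V(P_0) \cap V(P_1)$ denote the (finite) base locus of $\mathcal{P}$; since $F(B) \subseteq B$, after replacing $F$ by an iterate I may assume every point of $B$ is fixed.

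For $e = 1$, the pencil consists of lines through a unique base point $p_0$. Placing $p_0 = [0:0:1]$, one has $\pi = [x:y]$, and the identity $\pi \circ F = f \circ \pi$ forces the first two components of $F$ to depend only on $x, y$, yielding the elementary form (1). For $e \geq 2$, I would argue that the base locus must sit in a very restricted position: resolving $B$ by a sequence of blow-ups produces a morphism $\tilde\pi$ to $\P^1$, and intersection-theoretic analysis of its singular fibers, combined with the fact that $F$ extends a polynomial endomorphism of $\A^2$ and hence is regular with $H_\infty$ totally invariant, forces the base points to lie on $H_\infty$ and to consist of exactly two simple points. After a projective change of coordinates I may place them at $[1:0:0]$ and $[0:1:0]$, writing $\mathcal{P}$ as $\{\lambda x^h y^{k-h} + \mu z^k = 0\}$ with $\gcd(h,k) = 1$. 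Then requiring $F_0^h F_1^{k-h}$ (where $F = [F_0:F_1:F_2]$) to lie in $\C[x^h y^{k-h},\, z^k]$ forces $F_0 = x^d$ and $F_1 = y^d$ up to swapping, and computing the pullback of $z^k$ under $F$ recovers the explicit binomial form of $F_2 = R$.

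The hard part will be the rigidity argument in the case $e \geq 2$: pinning down the base locus to the stated configuration and recovering the explicit binomial $R$ from $F$-invariance. The delicate technical ingredients are a careful intersection-theoretic analysis on the resolution of $B$, together with the algebraic fact that $F_0^h F_1^{k-h} \in \C[x^h y^{k-h}, z^k]$ combined with the regularity and irreducibility conditions leaves no alternative besides the monomial powers $x^d$ and $y^d$. An alternative route could proceed via the induced map $f$ on $\P^1$ and classification of its semi-conjugacies, reducing the problem to counting critical values and leveraging the constraint that $F$ be a polynomial endomorphism of $\A^2$.
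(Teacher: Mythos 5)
This theorem is quoted from Dabija--Jonsson \cite{DJ07}; the paper you have been given does not supply a proof, so there is no in-paper argument to compare your proposal against.

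On its own terms, the framework you set up --- passing from the pencil map $\pi = [P_0:P_1]$ to a semiconjugacy $\pi\circ F = f\circ\pi$ on $\mathbb P^1$, and controlling the base locus $B$ via $F(B)\subseteq B$ and the pullback relations $P_i\circ F = G_i(P_0,P_1)$ --- is a sensible starting point and most likely mirrors the spirit of the original argument. However, there is a genuine gap in the crucial $e\geq 2$ case. You invoke ``the fact that $F$ extends a polynomial endomorphism of $\mathbb A^2$ and hence is regular with $H_\infty$ totally invariant'' to conclude that the base points of $\mathcal P$ lie on $H_\infty$. That hypothesis is not part of the theorem: the statement classifies \emph{arbitrary} endomorphisms of $\mathbb P^2$ preserving an irreducible pencil, with no invariant line assumed. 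Indeed, the binomial model $F[x:y:z]=[x^d:y^d:R]$ with $R=\prod_{i=1}^l (z^k+c_i x^h y^{k-h})$ and $d = kl$ sends a generic point of $V(z)$ off $V(z)$, so $V(z)$ need not be invariant. The rigidity of the base locus must therefore be derived purely from $F$-invariance of the pencil, not from invariance of a line at infinity. Beyond that, the central step of the $e\geq 2$ case --- showing the base locus reduces (set-theoretically) to two points and that the first two coordinates of $F$ are forced to be $x^d, y^d$ up to swap --- is explicitly left as a sketch, so even in the regular-polynomial subcase the proposal is not a complete argument.
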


Since we have shown in Proposition~\ref{prop: infinite-per-curve-imply-k-web} that a regular polynomial endomorphism admitting infinitely many periodic curves of bounded degrees must preserve either a $k$-web with $k \ge 2$ or a pencil of curves, Theorems~\ref{thm: preserve-3-web}, \ref{thm: preserve-2-web}, and \ref{thm: preserving-pencil-curves} together provide a complete classification of the possible polynomial endomorphisms.

It then follows directly that Theorem~\ref{thm: classification-summary} gives a coarse summary of all possible situations.

\begin{proof}[Proof of Theorem~\ref{thm: classification-summary}]
By Proposition~\ref{prop: infinite-per-curve-imply-k-web}, the map $F$ must belong to one of the classes described in Theorems~\ref{thm: preserve-2-web}, \ref{thm: preserve-3-web}, or \ref{thm: preserving-pencil-curves}.  

Suppose first that $F$ preserves a pencil of curves, so that $F$ falls into one of the cases of Theorem~\ref{thm: preserving-pencil-curves}.  
If it is in the first case, then there exists a pencil of lines passing through a single common point $p \in \P^2$ that are preserved by $F$.  
If $p$ is not contained in the totally invariant line at infinity, then after a change of coordinate, we assume $p = (0,0)$ and $F$ preserves a pencil of lines through $(0,0)$, which implies that $F$ is homogeneous.  
If $p \in H_\infty$, after a suitable change of coordinate, we have the pencil of lines contains $\{V(x - k) : k \in \C\}$. This implies that it is a regular polynomial skew product as it sends vertical lines to vertical lines.  
If $F$ falls in the second case of Theorem~\ref{thm: preserving-pencil-curves}, it is again clear that $F$ becomes a regular polynomial skew product after taking an iterate.

Next, if $F$ belongs to the class of Theorem~\ref{thm: preserve-3-web}, then there exists a generically finite rational map 
\[
    \mu: \P^1 \times \P^1 \longrightarrow \P^2,
\]
which restricts to a morphism on $(\C^*)^2$ arising from a finite quotient, such that
\[
    \mu \circ (x^d, y^d) = F \circ \mu.
\]

Finally, if $F$ is in one of the classes of Theorem~\ref{thm: preserve-2-web}, then in subcases (3) and (4), $F$ becomes a regular polynomial skew product after replacing it with an iterate and choosing a suitable affine coordinate on some $\A^2 \subseteq \P^2$.  
In all other cases, after possibly taking a further iterate, there exists a finite morphism
\[
    \mu: \P^1 \times \P^1 \longrightarrow \P^2,
\]
induced by a finite quotient, satisfying
\[
    \mu \circ (f, g) = F \circ \mu,
\]
where $(f, g)$ is a split endomorphism of $\P^1 \times \P^1$.  
In cases (1) and (5), both $f$ and $g$ are polynomials.  
In cases (2) and (6), we have
\[
    \mu \circ (f, f) = F \circ \mu
\]
for some rational function $f$, and we now show that $f$ must in fact be a polynomial after being replaced with a proper iterate.  

Since $F$ is a regular polynomial endomorphism, the line at infinity $H_\infty$ is totally invariant under $F$.  
Let $$E \coloneqq \mu^{-1}(H_\infty) \subseteq \P^1 \times \P^1.$$
since $H_\infty$ is totally invariant under $F$. This gives that $E$ is totally invariant under $(f,f)$. After replacing $(f,f)$ and $F$ with some iterate, we may assume that there exists an irreducible component of $E$ that is totally invariant under $(f,f)$. Let's abuse notation to still call it $E$. 

If $E$ is a vertical or horizontal line, then we conclude directly that $f$ is a polynomial after being replaced with some iterate. Suppose now that $E$ projects dominantly to both factors. Let $p$ be a fixed point of $f$ and let $L_p \coloneqq V(x - p)$. Then 
\[(f,f)|_{L_p}^{-1}(L_p \cap E) = L_p \cap (f,f)^{-1}(E) = L_p \cap E.\]
Therefore $L_p \cap E$ is a finite exceptional set of $(f,f)|_{L_p} = f$. Thus, $f$ is a polynomial after an iteration.
\end{proof}

Here we provide an example of a regular polynomial endomorphism of $\P^2$ admitting infinitely many periodic curves, corresponding to the third case of Theorem~\ref{thm: classification-summary}.

\begin{example}
Consider the regular polynomial endomorphism on $\A^2$
\[
    A_2(x, y) = (x^2 - y^2 - 2x,\, 2xy + 2y),
\]
which arises as the \emph{folding map} associated to the Lie algebra $\mathcal{A}_2$ (see~\cite{Sil24} for details).  
It admits non-trivial commuting pairs; indeed, it commutes with
\[
    A_3(x, y) = (x^3 - 3xy^2 - 3x^2 - 3y^2 + 3,\, 3x^2y - y^3),
\]
the folding map constructed from the Lie algebra $\mathcal{A}_3$.

Under the change of coordinates
\[
    z = x + i y, \qquad w = x - i y,
\]
the map becomes conjugate to
\[
    A_2(z, w) = (z^2 - 2w,\, w^2 - 2z),
\]
and we will work with this form for convenience.

One can verify that for any $k \in \C^*$ that is periodic under $x \mapsto x^2$, the curve
\[
    V(y - kx - k^{-1} + k^{-2})
\]
is periodic under $A_2$.  

Moreover, define the generically finite rational map
\[
    \Phi(x, y) = (x + y + x^{-1}y^{-1},\, x^{-1} + y^{-1} + xy)
\]
from $\P^1 \times \P^1$ to $\P^2$.  
Then the following commutative diagram holds:
\[
\begin{tikzcd}
\P^1 \times \P^1 \arrow{r}{(x^2,\, y^2)} \arrow[swap]{d}{\Phi} & \P^1 \times \P^1 \arrow{d}{\Phi} \\
\P^2 \arrow{r}{A_2} & \P^2.
\end{tikzcd}
\]

For a generic point in $\A^2$, there are three curves in the family
\[
    \mathcal{C} = \{\, V(y - kx - k^{-1} + k^2) \mid k \in \C^* \,\}
\]
passing through it.  Notice that $\mathcal{C}$ is preserved by $A_2$, since
\[
    A_2\big(V(y - kx - k^{-1} + k^2)\big) = V(y - k^2x - k^{-2} + k^4).
\]
Thus, $\mathcal{C}$ forms a $3$-web preserved by $A_2$, represented by the global differential
\[
    \omega = \prod_{k:\, y - kx - k^{-1} + k^2 = 0} (-k\,\mathrm{d}x + \mathrm{d}y + (k^2 - k^{-1})\,\mathrm{d}z).
\]
Substituting the symmetric functions of the roots in terms of $x$ and $y$, this can be expressed as
\begin{align*}
 \omega &= (\mathrm{d}y - y\,\mathrm{d}z)^3 
 + x\,(\mathrm{d}y - y\,\mathrm{d}z)^2(-\mathrm{d}x + x\,\mathrm{d}z) \\
 &\quad + y\,(\mathrm{d}y - y\,\mathrm{d}z)(-\mathrm{d}x + x\,\mathrm{d}z)^2 
 + (-\mathrm{d}x + x\,\mathrm{d}z)^3.
\end{align*}

Now, viewing $\Phi$ as a map from $(\C^*)^2$ to $\P^2$, we have
\[
    \Phi^{-1}\big(V(y - kx - k^{-1} + k^2)\big)
    = V(y - k^{-1}x) \cup V(x - k) \cup V(y - k),
\]
and therefore
\[
    \Phi^*\omega = \mathcal{F}_1 \boxtimes \mathcal{F}_2 \boxtimes \mathcal{F}_3,
\]
where $\mathcal{F}_1$, $\mathcal{F}_2$, and $\mathcal{F}_3$ are the foliations on $(\C^*)^2$ corresponding respectively to the families 
$V(y - k^{-1}x)$, $V(x - k)$, and $V(y - k)$ as $k$ ranges over $\C^*$.  
This is precisely the phenomenon described in Theorem~\ref{thm: preserve-3-web}.
\end{example}

\section*{ Acknowledgements}
The author gratefully acknowledges Prof. Junyi Xie for the insightful discussions and generous hospitality during a short visit to the Beijing International Center for Mathematical Research. Special thanks are also due to Prof. Niki Myrto Mavraki for the valuable conversations regarding this project and her work with Prof. DeMarco on "Geometry of Preperiodic Points", particularly during the "Workshop on Model Theory, Algebraic Dynamics, and Differential-Algebraic Geometry." The author thanks the Fields Institute and the workshop organizers for hosting this enriching event. Many thanks to Prof. Charles Favre for answering questions on k-webs, and to Profs. Jason Bell, Junyi Xie, and Niki Myrto Mavraki for their comments on an earlier draft of this paper. The author was supported in part by NSERC grant RGPIN-2022-02951.


\end{document}